\documentclass[12pt]{article}
\usepackage{mathrsfs}
\usepackage{amsthm}
\usepackage{amssymb}
\usepackage{latexsym}
\usepackage{amsmath,amsfonts}
\usepackage{mathrsfs}
\usepackage{cases}
\usepackage{latexsym,bm}
\usepackage{indentfirst}
\usepackage{xcolor}
\usepackage{ifpdf}
\usepackage{graphicx}
\usepackage{epstopdf}
\usepackage{epsfig}
\usepackage{psfrag}
\usepackage{enumitem}
\usepackage{epstopdf}
\usepackage{verbatim}
\usepackage{color}

\usepackage[
pdfauthor={Lu},
pdftitle={matching covered},
pdfstartview=XYZ,
bookmarks=true,
colorlinks=true,
linkcolor=blue,
urlcolor=blue,
citecolor=blue,
bookmarks=true,
linktocpage=true,
hyperindex=true
]{hyperref}

\topmargin -8mm \textwidth 16cm \textheight 22cm \lineskip 0.2cm

\title{Wheel-like bricks and minimal matching covered graphs
\footnote{The research
 is partially supported by NSFC (No. 12271235).
\newline E-mail addresses: xiaolinghe99@163.com (X. He), flianglu@163.com (F. Lu), xuejinxin00@163.com (J. Xue) } }
\author{ Xiaoling He$^1$\footnote{Current address: School of Mathematics and Statistics, Lanzhou University, Lanzhou, China.}, Fuliang Lu$^1$, Jinxin Xue$^1$ \\
\small {1. School of Mathematics and Statistics, Minnan Normal University, Zhangzhou, China}%\\
%\small {2. School of Mathematics and Statistics, Lanzhou University, Lanzhou, China}
}

\date{}

\newtheorem{lem}{Lemma}[section]
\newtheorem{thm}[lem]{Theorem}
\newtheorem{cor}[lem]{Corollary}

\newtheorem{pro}[lem]{Proposition}

\newtheorem{cla}{Claim}

\newtheorem{Prob}[lem]{Problem}

%\newtheorem{cla}{Claim}

%\newlist{Case}{enumerate}{2}
%\setlist[Case, 1]{%
    %label           =   {\bfseries Case \arabic*.},
    %labelindent=1em ,labelwidth=1.3cm, labelsep*=1em, leftmargin =!,wide}
%\setlist[Case, 2]{%
    %label           =   {\bfseries Subcase \arabic{Casei}.\arabic*.},
    %labelindent=1em ,labelwidth=1.3cm, labelsep*=1em, leftmargin =!,wide}

%\numberwithin{equation}{section}

\begin{document}
\bibliographystyle{plain}
%\footnotetext{$\ast$This work is supported by NFSC (NO.10831001).
%\\\mbox{}\hspace{18pt}$\ast\ast$Corresponding author,
%E-mail address: }\\
\newcommand{\udots}{\mathinner{\mskip1mu\raise1pt\vbox{\kern7pt\hbox{.}}
\mskip2mu\raise4pt\hbox{.}\mskip2mu\raise7pt\hbox{.}\mskip1mu}}
\maketitle
\begin{abstract}
A connected  graph $G$ with at least two vertices is {\em matching covered} if each of its edges lies in a perfect matching.
We say that an edge $e$ in a matching covered graph $G$ is {\em removable} if $G-e$ is matching covered. A pair $\{e,f\}$ of edges of a matching covered graph $G$ is a {\em removable doubleton} if $G-e-f$ is matching covered, but neither $G-e$ nor $G-f$ is. Removable edges and removable doubletons are called {\em removable classes}, introduced by Lov\'asz and Plummer in connection with ear decompositions of matching covered graphs.

A 3-connected graph is a {\em brick} if the removal of any two distinct vertices, the left graph has a perfect matching. A brick $G$ is {\em wheel-like} if $G$ has a vertex $h$, such that every removable class of $G$ has an edge incident with $h$.
Lucchesi and  Murty  proposed a problem of characterizing wheel-like bricks. We show that
every wheel-like brick may be obtained by splicing graphs whose underlying simple graphs are odd wheels in a certain manner.

A matching covered graph is {\em minimal} if the removal of any edge, the left graph is not matching covered. Lov\'asz and Plummer proved that the minimum degree of a minimal matching covered bipartite  graph different from $K_2$ is 2 by ear decompositions in 1977.
By the properties of wheel-like bricks,
 we prove that the minimum degree of a minimal matching covered graph other than $K_2$ is {either} 2 or 3.

\par {\small {\it Keywords:}  wheel-like bricks; minimal matching covered graphs; minimum degrees}
\end{abstract}
\vskip 0.2in \baselineskip 0.1in
%%%%%
%%%%%
%%%%%%%%%%%%%%%%%%%%%%%%%%%%%%%%%%%%%%%%%%%%%%%%%%%%%%%%%%%%%%%%%%%%%%%%%%%%%%%%%%%%%%%%%%%%%%%%%%%%%%%%%%%%%%%%%%%%%%%%%%%%%%%lufuliang
\section{Introduction}

Graphs considered in this paper may have multiple edges, but no loops. We follow \cite{BM08} for undefined notation and terminology.
Let $G$ be a graph with the vertex set $V(G)$ and the edge set $E(G)$. A connected nontrivial graph $G$ is {\em matching covered} if each of its edges lies in a perfect matching.
A graph $G$ with four or more vertices is {\em bicritical} if for any two distinct vertices $u$ and $v$ in $G$, $G-\{u, v\}$ has a perfect matching. Obviously,
every bicritical graph is matching covered.

For $X,Y\subseteq V(G)$, by $E_G[X,Y]$ we mean the set of edges of $G$ with {one end} in $X$ and the {other end} in $Y$. Let $\partial_G(X) = E_G[X,\overline{X}]$ be an edge cut of $G$, where $\overline{X} = V(G) \backslash X$. (If
$G$ is understood, the subscript $G$ is omitted.) If $X = \{u\}$, then we denote $\partial_G(\{u\})$, for
brevity, by $\partial_G(u)$ or $\partial(u)$.
{The the degree of $u$ in $G$, denoted by $d_G(u)$, is equal to the size of $\partial_G(u)$.}  Denote by $\delta(G)$ and $\Delta(G)$  the minimum degree  and the maximum degree of $G$, respectively.
An edge cut $\partial(X)$ is trivial if $|X| = 1$ or $|\overline{X}| = 1$.
Let $\partial(X)$ be an edge cut of $G$. Denoted by $G/(X\rightarrow x)$, or simply $G/X$, the graph obtained from $G$ by contracting $X$ to a singleton $x$ (and removing any resulting loops). The graphs $G/X$ and $G/\overline{X}$ are {the} $\partial(X)$-contractions of $G$.

An edge cut $\partial(X)$ is {\em separating} if both $\partial(X)$-contractions of $G$ are matching covered, and $\partial(X)$ is {\em tight} of $G$ if $|\partial(X)\cap M|=1$ for every perfect matching $M$ of $G$. Obviously, a trivial edge cut is a tight cut and every tight cut is separating.
%A matching covered nonbipartite graph is a {\em brick} if every tight cut is trivial, and it is {\em solid} if every separating cut is a tight cut.
{A matching covered graph is a {\em brick} if it is nonbipartite and every tight cut is trivial, and it is {\em solid} if every separating cut is a tight cut.}
Moreover, a  graph $G$ is a brick if and only if $G$ is 3-connected and bicritical \cite{ELP82}.
There is a procedure called a {\em tight cut decomposition}, due to Lov\'{a}sz \cite{Lovasz87}, which can be applied to $G$ to produce a list of unique bricks and braces (a matching covered bipartite graph in which every tight cut is trivial). We say that this list of bricks are the bricks of $G$. A matching covered graph $G$ is called a {\em near-brick} if $G$ contains only one brick. Obviously, every brick is a near-brick.
%Using the tool of tight cut decomposition, many matching problems can be reduced to braces and bricks.
%In particular, any two decompositions of a matching covered graph $G$ yield the same number of bricks; this number is denoted by $b(G)$.

We say that an edge $e$ in a matching covered graph $G$ is {\em removable} if $G-e$ is matching covered. A pair $\{e,f\}$ of edges of a matching covered graph $G$ is a {\em removable doubleton} if $G-e-f$ is matching covered, but neither $G-e$ nor $G-f$ is. Removable edges and removable doubletons are called {\em removable classes}.
Lov\'asz \cite{lo} proved  that every brick different from $K_4$  (the complete graph with 4 vertices)  and
{the triangular prism (the complement of a cycle of length 6)}  has a removable edge. Improving Lov\'asz's result,  Carvalho, Lucchesi and Murty obtained a lower bound of removable classes of a brick in terms of the maximum degree.
\begin{thm}[\cite{CLM99}]\label{thm:re_in_brick}
    Every brick has at least $\Delta(G)$ removable classes. Moreover, every brick has at least $\Delta(G)-2$ removable edges.
\end{thm}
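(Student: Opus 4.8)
The plan is to fix a vertex $h$ of $G$ with $d_G(h)=\Delta(G)$ and to exhibit $\Delta(G)$ pairwise distinct removable classes of $G$, nearly all of them carried by the edges of $\partial_G(h)=\{e_1,\dots,e_{\Delta(G)}\}$; the second assertion will then follow by limiting the number of these classes that are removable doubletons.

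First I would record the local criterion for removability. Since $G$ is a brick it is $3$-connected and bicritical, so for every edge $e$ the graph $G-e$ is connected and still has a perfect matching; by the barrier analysis of Edmonds, Lov\'asz and Plummer, $G-e$ fails to be matching covered exactly when it has a \emph{nontrivial barrier}, a vertex set $B$ with $|B|\ge 2$ such that $(G-e)-B$ has precisely $|B|$ odd components and either $B$ is not independent in $G-e$ or some component of $(G-e)-B$ is even. I would then split the non-removable edges $e$ of $G$ by the shape of such a $B$: if every odd component of $(G-e)-B$ is a single vertex then one checks that $\{e,g\}$ is a removable doubleton, where $g$ is a suitable edge lying inside $B$; otherwise some odd component $X$ of $(G-e)-B$ has $3\le|X|\le|V(G)|-3$, so $G-e$ carries the nontrivial tight cut $\partial_{G-e}(X)$.

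Next I would build a map $R$ from $\partial_G(h)$ to removable classes, setting $R(e)=\{e\}$ if $e$ is removable, $R(e)=\{e,g\}$ if $e$ lies in a removable doubleton, and calling $e$ \emph{bad} otherwise. This map is injective on the non-bad edges at $h$: if $R(e)=R(e')$ were a doubleton $\{e,g\}$ with $e\neq e'$ in $\partial_G(h)$ then $e'=g$, making $\{e,e'\}$ a removable doubleton with both edges at $h$, which cannot happen, because the two edges of a removable doubleton never share an endpoint --- indeed one checks easily that every perfect matching of $G$ containing one edge of a removable doubleton contains the other, so a common endpoint would force both edges to lie in no perfect matching at all, contradicting matching-coveredness. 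Hence $G$ has at least $\Delta(G)-b$ removable classes, where $b$ is the number of bad edges at $h$. To recover the remaining $b$ classes, for each bad $e=hv$ I would exploit the nontrivial tight cut $\partial_{G-e}(X)$ supplied by the previous paragraph --- the resulting rigidity of the perfect matchings of $G$ allows one to locate a removable class $R^\ast(e)$ of $G$ supported away from $h$ (in the simplest situations it consists of one edge from $X$ and one edge from $\overline{X}$), possibly after descending by induction on $|V(G)|+|E(G)|$ into a sub-brick built from the cut. Showing the $R^\ast(e)$ are pairwise distinct and distinct from all $R(e')$ then brings the total to $\Delta(G)$. For the second assertion --- which, exactly as in Lov\'asz's theorem, must exclude $K_4$ and $\overline{C_6}$, the two bricks with no removable edge --- one shows in addition that at most two of the $\Delta(G)$ classes just produced are removable doubletons, so at least $\Delta(G)-2$ of them are single removable edges.

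The hard part is this recovery step: the bound is attained by $K_4$, by the triangular prism $\overline{C_6}$ and by the odd wheels, so there is no slack, and the crude injection of the previous paragraph genuinely undercounts --- already for $\overline{C_6}$ a fixed degree-$3$ vertex meets only two of its three removable doubletons, the third being ``opposite'' to the bad edge at that vertex. Carrying out the recovery rigorously requires a delicate accounting of how the removable classes distribute across the nontrivial tight cuts appearing in the various graphs $G-e$ (for bad $e$) and in the inductive sub-bricks, together with the more routine verification that a removable class of a sub-brick produced from a tight cut of $G-e$ --- a cut which need not be tight in $G$ itself --- lifts back to a removable class of $G$.
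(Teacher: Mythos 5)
The paper gives no proof of this statement at all: it is quoted from [CLM99] (where it is obtained within the theory of ear decompositions and removable classes developed there), so your attempt has to be judged on its own, and as it stands it has a genuine gap rather than a complete argument. Your injection $e\mapsto R(e)$ on the edges at a maximum-degree vertex $h$ (together with the correct observation that the two edges of a removable doubleton are never adjacent) only yields: the number of removable classes is at least the number of edges of $\partial(h)$ that lie in \emph{some} removable class. The whole content of the theorem is precisely the step you label ``recovery'' and leave as a hope: nothing you prove bounds the number of bad edges, and bad edges really occur (in the triangular prism one of the three edges at each vertex lies in no removable class, and the wheel-like bricks studied in this very paper show that how removable classes distribute around a single vertex is a delicate matter). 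The sketch you offer for recovery --- ``rigidity'' of perfect matchings, ``in the simplest situations'' one edge on each side of the cut, ``possibly after descending by induction'' --- names no actual mechanism, does not say what the induction is applied to or why the recovered classes avoid colliding with each other and with the $R(e')$, and the lifting of a removable class of a contraction back to $G$ across a cut of $G-e$ that need not even be separating in $G$ is exactly the kind of step that requires tools such as Lemmas \ref{lem:re_also_re} and \ref{lem:removable_doubleton}, which apply only to separating/tight cuts of $G$ under extra hypotheses.

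Two auxiliary claims are also asserted without justification. First, in your dichotomy for a nonremovable edge $e$, the claim that when all odd components of $(G-e)-B$ are trivial some edge $g$ inside $B$ makes $\{e,g\}$ a removable doubleton is not proved and is not true at the level of generality stated: you would at least need to know that $B$ spans exactly one ``extra'' edge, that the ends of $e$ sit in the independent side, and that $G-e-g$ is matching covered, none of which follows from the barrier description alone. Second, the final step of the ``moreover'' part uses that at most two of the produced classes are doubletons; this is again a nontrivial fact about bricks (essentially that a brick other than $K_4$ and the triangular prism has at most one removable doubleton), not something your construction delivers. Your remark that the edge-count assertion must exclude $K_4$ and $\overline{C_6}$ is a fair reading (the paper's statement omits the exceptions appearing in [CLM99]), but it does not close the argument: what you have is a plausible counting framework, with the theorem's actual difficulty --- accounting for the edges at $h$ that lie in no removable class --- still unproved.
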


 For an integer $k\geq3$, the \emph{wheel} $W_k$ is the graph obtained from a cycle $C$ of length $k$ by adding a new vertex $h$ and joining it to all vertices of $C$.
The cycle $C$ is the \emph{rim} of $W_k$, the vertex $h$ is its \emph{hub}. Obviously, every wheel is planar. A wheel $W_k$ is odd if $k$ is odd. The graph $K_4$  is an odd wheel that every edge lies in a removable doubleton. For an odd wheel other than $K_4$,
it can be checked every edge on the rim is not removable, and  every edge incident with the hub is removable  (see Exercise 2.2.4 in \cite{Lucchesi2024}). We say that $G$ is {\em wheel-like} if for every removable class $R$ of $G$, there exists a vertex $h$ of $G$, called its {\em hub}, such that $|R\cap \partial(h)|=1$. Lucchesi and Murty proposed the following problem.
\begin{Prob}\label{prob:wheel-like}{\rm (see Page 216, and  Unsolved Problems 10 in \cite{Lucchesi2024})}
   Characterize wheel-like bricks;  characterize wheel-like bricks as a splicing of two bricks.
\end{Prob}

 We obtain the following theorem in this paper, where the graph family $\mathcal{G}$ is defined in Section 3.
\begin{thm}\label{thm:wheel-like}
    Let $G$ be a wheel-like brick. Then $G\in \mathcal{G}$.
\end{thm}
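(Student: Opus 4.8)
The plan is to induct on $|V(G)|+|E(G)|$, splitting on whether $G$ is solid.

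\emph{Step 1 (consequences of wheel-likeness).} Fix a hub $h$ of $G$. One first checks that distinct removable classes of a brick are pairwise edge-disjoint: a singleton $\{e\}$ cannot meet a doubleton (if $G-e$ is matching covered then no removable doubleton contains $e$), and two doubletons $\{e,f_1\},\{e,f_2\}$ are impossible because the unique edge of $G-e$ lying in no perfect matching of $G-e$ would have to be $f_i$ for both $i$, forcing $f_1=f_2$. Since each removable class of $G$ contains exactly one edge of $\partial(h)$, there are at most $d_G(h)\le\Delta(G)$ of them, so comparison with Theorem~\ref{thm:re_in_brick} forces $d_G(h)=\Delta(G)$, exactly $\Delta(G)$ removable classes, and each edge of $\partial(h)$ lying in exactly one of them. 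In particular every removable edge of $G$ is incident with $h$, and every removable doubleton of $G$ has exactly one edge incident with $h$. Finally, as $G$ is bicritical, $G-h$ is a $2$-connected factor-critical graph of odd order (since $(G-h)-v=G-\{h,v\}$ always has a perfect matching), and so has an ear decomposition $G-h=C_0\cup P_1\cup\cdots\cup P_r$ with $C_0$ an odd cycle and every $P_i$ an odd ear.

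\emph{Step 2 (base case: $G$ solid).} This is the heart of the argument: I would show that the underlying simple graph of $G$ is an odd wheel with hub $h$, equivalently that the underlying simple graph of $G-h$ is an odd cycle $C$ (then $h$ must be joined to every vertex of $C$, as $\delta(G)\ge 3$, and $G$ is a possibly-multi-edge odd wheel, hence in $\mathcal G$). Suppose not: then the ear decomposition of $G-h$ has an ear, or $C_0$ has a chord, or $G$ has a pair of parallel edges avoiding $\partial(h)$, and in each case I would produce an edge $e\notin\partial(h)$ that is either removable in $G$ or the member of a removable doubleton $\{e,f\}$ of $G$ with $f\notin\partial(h)$ as well --- either way contradicting wheel-likeness. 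Solidity is essential here, since it sharply restricts the non-removable edges of $G$ so that the ear/chord structure of $G-h$ can be controlled; Theorem~\ref{thm:re_in_brick} is also used, via a counting argument, to rule out a non-hub vertex of degree at least $4$, which would otherwise force a removable class missing $h$.

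\emph{Step 3 (inductive step: $G$ not solid).} Then $G$ has a nontrivial separating cut $\partial(X)$. By a standard minimality argument one may take $\partial(X)$ so that (iterating tight-cut decompositions as needed) $G$ is built from strictly smaller bricks by splicing at the vertices created by these cuts; a point requiring care is to argue that no bipartite piece intervenes, since such a piece could not lie in $\mathcal G$. Using the relationship between removable classes of $G$ and of its $\partial(X)$-contractions --- a removable class of a contraction lifts to one of $G$, and, placing $h$ on the appropriate shore, one avoiding a newly contracted vertex lifts to one avoiding $h$ --- each of these smaller bricks is again wheel-like, with hub either $h$ or a new vertex. By the induction hypothesis each lies in $\mathcal G$, and since $\mathcal G$ is, by its Section~3 definition, closed under precisely this splicing operation, $G\in\mathcal G$.

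\emph{Where the difficulty lies.} The substantial obstacle is Step~2: establishing that a solid wheel-like brick has an odd wheel as its underlying simple graph. This cannot be purely formal --- it must combine the $2$-connected factor-critical structure of $G-h$, the structure of solid bricks, and the removable-class bound of Theorem~\ref{thm:re_in_brick} --- and the case analysis over ears, chords, parallel edges, and the exceptional small bricks $K_4$ and the triangular prism is where the bulk of the care is needed. The secondary difficulty, in Step~3, is to make the decomposition respect the hub, so that the wheel-like property (with a named hub) descends to both contractions, no bipartite piece arises, and the splicing reassembly is exactly the one built into the definition of $\mathcal G$.
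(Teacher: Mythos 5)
Your overall architecture (solid base case giving an odd wheel; nonsolid case decomposed by a separating cut and reassembled by splicing) matches the paper's, but both of the places you yourself flag as ``where the difficulty lies'' are left as plans rather than proofs, and they are exactly where the content of the theorem sits. In Step~2 you assert, via an intended ear-decomposition/case analysis, that a solid wheel-like brick has an odd wheel as underlying simple graph, but no argument is actually given for killing an ear, a chord of $C_0$, or a parallel pair off $\partial(h)$; the paper does not attempt this either --- it invokes the known result (Proposition~\ref{pro:solid-wl}, from Lucchesi--Murty) that a solid brick is either a wheel with hub $h$ (up to multiple edges at $h$) or has two removable edges avoiding $h$. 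Until Step~2 is carried out or such a result is cited, the base case is missing.

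The second gap is in Step~3 and is more than a matter of care: the family $\mathcal G$ is \emph{not} closed under splicing of two arbitrary members --- by definition, $\mathcal G_{j+1}$ requires one factor to lie in $\mathcal G_1$ (an odd wheel, possibly with multiple edges at its hub) and the splicing to satisfy the hub/nonremovable-edge conditions 1)--2). An arbitrary nontrivial separating cut only gives, by induction, two pieces in $\mathcal G$, which does not place $G$ in $\mathcal G$. The paper resolves this by choosing the cut via Theorem~\ref{thm:robust-cut} and Corollary~\ref{cor:robust-cut-solid-brick}, so that one contraction $G''$ is a \emph{solid} brick; then Claim~1 (using Lemmas~\ref{lem:re_also_re}, \ref{lem:removable_doubleton}, \ref{bipar-H-nonre}, \ref{lem:V(H)_re_edge}, \ref{lem:at_least_one_wheel-like}) shows both contractions are wheel-like, Claim~2 shows the intermediate bipartite graph $H$ has only two vertices (your ``no bipartite piece intervenes'' is precisely this nontrivial claim, which you do not argue), solidity plus Proposition~\ref{pro:solid-wl} makes $G''$ an odd wheel in $\mathcal G_1$, and Claim~3 together with the final paragraph verifies the conditions 1)--2) in the definition of $\mathcal G$ (which your proposal never checks). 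Also beware your blanket statement that ``a removable class of a contraction lifts to one of $G$'': for a non-tight separating cut this needs Lemma~\ref{lem:re_also_re} (removability in \emph{both} contractions) or Lemma~\ref{lem:removable_doubleton} for doubletons meeting the cut, which is exactly the bookkeeping the paper's Claim~1 performs. As written, your proposal identifies the right skeleton but leaves the decisive steps (the solid case, the choice of cut producing a solid/odd-wheel factor with trivial bipartite middle, and the verification of the splicing conditions defining $\mathcal G$) unproved.
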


Let $G$ be a matching covered graph. We say that $G$ is {\em minimal} if $G-e$ is not a matching covered graph for any edge $e$ in $G$. Obviously, if $G$ is {minimal}, then $G$ has no removable edges. It is known that every matching covered graph on four or more vertices is 2-connected \cite{LP86}. So, the minimum degree of a matching covered graph with more than two vertices is at least 2. {Lov\'asz and Plummer \cite{LP77} proved that $\delta(G)=2$ if $G$ is a minimal matching covered bipartite graph \footnote{Lov\'asz and Plummer used the terminology ``minimal elementary bipartite graph". In fact, a graph is minimal elementary bipartite graph if and only if it is a minimal matching covered bipartite graph.}.
For example, a cycle with even number of vertices is a minimal matching covered bipartite graph with minimum degree 2. For a minimal matching covered nonbipartite  graph, the minimum degree may be 3; for example, $K_4$ and {the triangular prism}  are such graphs.
Using the properties of wheel-like bricks, we prove  the following result.

\begin{thm}\label{thm:main-thm}
    Let $G$ be a minimal matching covered graph with at least four vertices. Then $\delta(G)=2$  or 3.
\end{thm}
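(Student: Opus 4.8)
The plan is to argue by induction on $|V(G)|$, reducing to the study of a single brick via a tight cut decomposition and then invoking Theorem \ref{thm:wheel-like}. Two cases are immediate. If $G$ is bipartite it is a minimal matching covered bipartite graph, so $\delta(G)=2$ by the Lov\'asz--Plummer theorem \cite{LP77}. If $G$ is a brick, then being minimal it has no removable edge --- in particular no pair of parallel edges, since one of a parallel pair is always removable --- so by Lov\'asz's theorem \cite{lo} $G$ is $K_4$ or the triangular prism and $\delta(G)=3$. Hence assume $G$ is nonbipartite and not a brick.

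The basic tool is a lifting principle for removability. For a cut $\partial(X)$ of a graph $H$, if both contractions $H/X$ and $H/\overline X$ are matching covered then so is $H$ (glue perfect matchings of the two contractions along the cut edge each of them uses). Hence, if $\partial(X)$ is a tight cut of a matching covered $H$ and $e$ has both ends in $X$, then $e$ is removable in $H$ if and only if it is removable in $H/\overline X$: the two contractions of $H-e$ are $(H/\overline X)-e$ and $H/X$, the latter matching covered because $\partial(X)$ is tight. By the theory of tight cut decompositions \cite{Lovasz87} there is a nontrivial tight cut $\partial(X)$ of $G$ with $B:=G/\overline X$ a brick; choosing $|X|$ minimum, $B$ has a single contracted vertex $h$ (the image of $\overline X$) and every other vertex of $B$ is a genuine vertex of $G$ with the same degree in $B$ as in $G$. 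By the lifting principle and minimality of $G$, no edge of $B$ avoiding $h$ is removable in $B$; together with the structure of removable doubletons of a brick this forces $B$ to be $K_4$, the prism, or a wheel-like brick with hub $h$. If $B$ is $K_4$ or the prism it has at least three vertices other than $h$, all of degree $3$, so $\delta(G)\le 3$; otherwise Theorem \ref{thm:wheel-like} gives $B\in\mathcal G$.

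So suppose $B\in\mathcal G$ is wheel-like with hub $h$. If some vertex $v\ne h$ of $B$ has $d_B(v)=3$, then $v$ is genuine and $\delta(G)\le d_G(v)=3$. Otherwise, inspecting the splicing structure of the members of $\mathcal G$ and using once more that $B$ has no removable, hence (by the lifting principle) no parallel, edge off $h$, one obtains that the genuine vertices of $B$ form an odd cycle $C_k$, each joined to $h$ by at least two parallel edges; thus $G[X]=C_k$ and $|\partial_G(X)|=d_B(h)\ge 2k\ge 6$. Consider the other contraction $B_2:=G/X$, with $x$ the image of $X$; then $4\le|V(B_2)|<|V(G)|$ and $d_{B_2}(x)=|\partial_G(X)|\ge 6$. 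I claim $B_2$ is minimal. A removable edge of $B_2$ with both ends in $\overline X$ would be removable in $G$ by the lifting principle. A removable edge $xa$ of $B_2$ corresponds to an edge $va\in\partial_G(X)$ with $v$ on $C_k$; then $B_2-xa$ is matching covered, and so is $B-va$ (deleting one of the $\ge 2$ parallel $vh$-edges leaves a graph that still contains a spanning $W_k$, its other edges being parallel copies of hub-edges of that $W_k$, hence matching covered), so by the lifting principle $va$ is removable in $G$. Either possibility contradicts the minimality of $G$, so $B_2$ is minimal. By the induction hypothesis $\delta(B_2)\le 3$, and any vertex $w$ with $d_{B_2}(w)\le 3$ is different from $x$, hence genuine, so $\delta(G)\le d_G(w)=d_{B_2}(w)\le 3$. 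In all cases $\delta(G)\le 3$; since matching covered graphs with at least four vertices are $2$-connected \cite{LP86}, $\delta(G)\in\{2,3\}$.

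The hardest step should be establishing that a brick all of whose removable edges meet a single vertex $h$ is, except for the prism, wheel-like with hub $h$; this amounts to controlling its removable doubletons, via the known fact that the two edges of a removable doubleton of a brick lie in opposite colour classes of the bipartite matching covered graph left after deleting them. A related delicate point is extracting from the definition of $\mathcal G$ the dichotomy ``$B$ has a genuine degree-$3$ vertex, or $B$ is an odd cycle with doubled hub-edges'': it is the preservation of the degree-$3$ rim vertices under splicing of odd wheels that makes this work.
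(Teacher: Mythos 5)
Your reduction collapses at its first structural step: the assertion that every nonbipartite, non-brick minimal matching covered graph $G$ has a nontrivial tight cut $\partial(X)$ such that $G/\overline{X}$ is a brick. The theory of tight cut decompositions does not give this: the bricks of the decomposition may appear only after several disjoint sets have been contracted, in which case they have more than one contracted vertex and their degree-$3$ vertices need not be genuine vertices of $G$. In fact the assertion is false under exactly the hypotheses you invoke it. Let $G$ be the triangular prism with two of its three matching edges bi-subdivided, i.e.\ replace $11'$ by a path $1\,x\,y\,1'$ and $22'$ by a path $2\,u\,w\,2'$. One checks that $G$ is nonbipartite, matching covered, not a brick (the cut around $\{1,x,y\}$ is a nontrivial tight cut), and minimal (every edge is nonremovable; e.g.\ in $G-23$ the edge $2'3'$ lies in no perfect matching, in $G-33'$ the edge $12$ lies in no perfect matching, etc., and symmetry covers all edge orbits). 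Yet if $G/\overline{X}$ were a brick, $X$ would have to avoid the degree-$2$ vertices $x,y,u,w$ (degrees of genuine vertices are preserved under contracting $\overline{X}$), so $X$ would be an odd subset of the six original prism vertices of size $3$ or $5$; for each such set the perfect matching $\{1x,\,y1',\,2u,\,w2',\,33'\}$ or $\{1x,\,y1',\,23,\,2'3',\,uw\}$ crosses $\partial(X)$ at least three times, so $\partial(X)$ is never tight. Hence no tight cut of this $G$ has a brick contraction, and the induction you run inside $B=G/\overline{X}$ has nothing to work with. This is precisely the difficulty the paper's proof is built around: it does not reduce to a brick but iterates contractions of nontrivial odd components of maximal barriers (chosen, via Lemma \ref{lem:bipart-M-C-at-most-1-nonre}, so that the contracted vertex meets a nonremovable edge) to reach a single contraction $G'=G/\overline{Y}$ that is merely \emph{bicritical}, possibly with $2$-separations, and all of whose vertices but one are genuine; the non-$3$-connected case is then handled by a separate induction over $2$-separation cuts in Lemma \ref{lem:WL-bicritical}, using Propositions \ref{pro:2-sep-cut-of-bi} and \ref{pro:WL-bicritical-no-re}. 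Your proposal has no substitute for any of this.

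Beyond that, the two steps you yourself flag as hardest are load-bearing and unproved, and one is stated inaccurately: a brick with a removable doubleton and no removable edge off $h$ need not be $K_4$, the prism, or wheel-like --- bricks whose underlying simple graph is $R_8$ also occur (Theorem \ref{thm:sim-near-bi-brick}); this case analysis, together with the conclusion ``every edge at $h$ is removable or some vertex off $h$ has degree $3$'', is the content of Proposition \ref{pro:brick-cubic-re}, which rests on Theorem \ref{thm:wheel-like} and Lemma \ref{lem:wheel-like} rather than on an inspection of $\mathcal{G}$. Even granting these, your argument would only cover the case where the relevant contraction is $3$-connected, which, as the example above shows, cannot be arranged.
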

 We will present some known results in Section 2.
The propositions of wheel-like bricks
will be presented in Section 3, and the proof of Theorem \ref{thm:main-thm} will be given in Section 4.

\section{Preliminaries}
We begin with some notation. For a vertex  $u\in V(G)$, denoted by $N_G(u)$ or simply $N(u)$, the set of vertices in $G$ adjacent to $u$.
{A component with an odd (even) number of vertices is called an {\em odd (even) component}.}
We denote by $o(G)$ the number of components with odd number of vertices of a graph $G$.
 A nonempty vertex set $B$ of a graph $G$ that has a perfect matching is a {\em barrier} if $o(G-B)=|B|$. A component (or a  barrier) is trivial if it contains exactly one vertex.
 Tutte  proved the following theorem in 1947.
\begin{thm}[\cite{Tutte47}]\label{thm:Tutte}
    A graph $G$ has a perfect matching if and only if $o(G-S)\le|S|$, for every $S\subseteq V(G)$.
\end{thm}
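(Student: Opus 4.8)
The plan is to prove both implications directly, treating necessity quickly and devoting the work to sufficiency via an edge-maximality argument. Since whether $G$ has a perfect matching and the value of $o(G-S)$ for each $S$ both depend only on the underlying simple graph of $G$, I may assume throughout that $G$ is simple. For necessity, suppose $G$ has a perfect matching $M$ and fix $S\subseteq V(G)$. Each odd component $D$ of $G-S$ has an odd number of vertices, so $M$ cannot match all of $V(D)$ within $D$; hence some vertex of $D$ is matched by $M$ to a vertex outside $D$, which must lie in $S$ because $D$ is a component of $G-S$. Distinct odd components are matched to distinct vertices of $S$, so $o(G-S)\le|S|$.

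For sufficiency, suppose for contradiction that $G$ satisfies $o(G-S)\le|S|$ for all $S$ but has no perfect matching. First I would observe that adding an edge never increases $o(G-S)$: joining two components of $G-S$ merges them, and a short parity check shows the number of odd components cannot go up. Thus the Tutte condition is preserved under edge additions, and I may pass to an edge-maximal supergraph $G^{*}$ on $V(G)$ that still satisfies the Tutte condition and still has no perfect matching; by maximality, adding any missing edge to $G^{*}$ creates a perfect matching.

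The heart of the proof is the structural claim that $G^{*}-U$ is a disjoint union of complete graphs, where $U$ is the set of vertices of $G^{*}$ adjacent to all others. I would argue by contradiction: if some component of $G^{*}-U$ is not complete, there exist vertices $x,y,z$ with $xy,yz\in E(G^{*})$ but $xz\notin E(G^{*})$, and since $y\notin U$ there is a vertex $w$ with $yw\notin E(G^{*})$. By maximality, $G^{*}+xz$ has a perfect matching $M_{1}\ni xz$ and $G^{*}+yw$ has a perfect matching $M_{2}\ni yw$. The symmetric difference $M_{1}\triangle M_{2}$ is a disjoint union of even alternating cycles containing both $xz$ and $yw$. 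If these two edges lie on different cycles, switching the matching along the cycle through $xz$ removes $xz$ and introduces no other new edge, yielding a perfect matching of $G^{*}$. If they lie on a common cycle $C$, I would reroute $C$ using one of the genuine edges $xy$ or $yz$: this splits $C$ into two even segments, each matchable by alternate edges of $C$ that all lie in $G^{*}$, again producing a perfect matching of $G^{*}$. Either outcome contradicts the choice of $G^{*}$, proving the claim. I expect this second case to be the main obstacle: one must pick the rerouting through $x,y,z$ so that both offending edges are simultaneously avoided while every edge used stays in $G^{*}$, which requires tracking the $M_{1}/M_{2}$ alternation carefully along $C$.

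Finally, with $G^{*}-U$ a disjoint union of cliques, I would construct a perfect matching of $G^{*}$ explicitly: match each even clique component internally, match each odd clique component internally except for one leftover vertex, and send the $o(G^{*}-U)$ leftover vertices to distinct vertices of $U$, which is possible because every vertex of $U$ is universal and $o(G^{*}-U)\le|U|$ by the Tutte condition applied with $S=U$. The unused vertices of $U$ form a clique, and since $o(G^{*})\le 0$ forces every component of $G^{*}$ to be even and hence $|V(G^{*})|$ to be even, a parity count shows this leftover clique has even order and can be matched within itself. The resulting perfect matching of $G^{*}$ contradicts the assumption that $G^{*}$ has none, so $G$ must have a perfect matching, completing the sufficiency direction.
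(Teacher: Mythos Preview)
The paper does not give its own proof of this statement: Theorem~\ref{thm:Tutte} is quoted as a classical result of Tutte and simply cited from~\cite{Tutte47}, so there is no in-paper argument to compare against.

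Your proposal is the standard modern textbook proof (the edge-maximal counterexample argument, essentially due to Lov\'asz and presented, e.g., in Diestel or Bondy--Murty), and it is correct. The necessity direction and the final assembly of a perfect matching from the clique components of $G^{*}-U$ are both handled cleanly, including the parity check that the leftover universal vertices are even in number. Your identification of the same-cycle case in the structural claim as the delicate step is accurate; the rerouting through one of the genuine edges $xy$ or $yz$ is exactly how it is resolved, and with the alternation bookkeeping you flag it goes through without difficulty. Since the paper treats Tutte's theorem as background, your write-up would in fact supply more than the paper does here.
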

Using Tutte's Theorem, we have the following properties about matching covered graphs.
\begin{cor}[\cite{Lovasz87}]\label{cor:M-C-without-even-components}
  Let $G$ be a matching covered graph and let $S$ be a subset of $V(G)$. Then, $o(G -S) \le |S|$, with equality only if $S$ is independent and $G-S$ has no even components.
\end{cor}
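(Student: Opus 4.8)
The first inequality requires no work beyond Theorem~\ref{thm:Tutte}: since $G$ is matching covered it has a perfect matching, so $o(G-S)\le|S|$ for every $S\subseteq V(G)$. The content lies entirely in the equality case, which I would treat assuming $S\neq\emptyset$ (the case $S=\emptyset$ is degenerate and can be dispatched in a line). So suppose $o(G-S)=|S|=:k$, and let $D_1,\dots,D_k$ be the odd components of $G-S$ and $E_1,\dots,E_m$ its even components. The strategy is two-stage: first extract the forced behaviour of an \emph{arbitrary} perfect matching $M$ of $G$ by a parity/counting argument, and then feed into these constraints specific perfect matchings that are guaranteed to exist because $G$ is matching covered.

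For the counting stage, observe that each odd component $D_i$ has odd order, so $M$ cannot saturate $D_i$ using only internal edges; hence at least one edge of $M$ must join $D_i$ to $S$ (an $M$-edge leaving $D_i$ cannot land in another component of $G-S$). This produces at least $k$ edges of $M$ between $\bigcup_i D_i$ and $S$, and these are incident to pairwise distinct vertices of $S$. Since $|S|=k$, there are \emph{exactly} $k$ such edges, and they exhaust the vertices of $S$. I would then record the consequences, valid for \emph{every} perfect matching $M$: every vertex of $S$ is matched into some odd component; no edge of $M$ lies inside $S$; no edge of $M$ joins $S$ to an even component; and each even component is perfectly matched within itself.

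With these constraints in hand, both conclusions follow by choosing a convenient perfect matching. If $S$ contained an edge $ab$, then, as $G$ is matching covered, some perfect matching would contain $ab$, matching $a$ inside $S$ and contradicting the counting stage; hence $S$ is independent. Similarly, if some even component $E_j$ existed, then, because $G$ is connected (every matching covered graph is connected) and $E_j$ is a component of $G-S$ with $S\neq\emptyset$, there would be an edge $us$ with $u\in E_j$ and $s\in S$; a perfect matching through $us$ would match $s\in S$ into the even component $E_j$, again contradicting the counting stage. Hence $G-S$ has no even component.

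The main obstacle is the counting stage, since that is where all the structural information is generated; the delicate point is to state its conclusions as constraints that must hold for \emph{every} perfect matching, so that in the final stage one is free to plug in a perfect matching tailored to a chosen edge. The two closing arguments are then short, relying only on the matching-covered hypothesis (a perfect matching through any prescribed edge) together with connectivity to locate an edge from an even component into $S$.
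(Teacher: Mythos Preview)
Your argument is correct. The counting stage is exactly right: once $o(G-S)=|S|$, parity forces each odd component to send precisely one $M$-edge into $S$, saturating $S$ entirely from the odd side, and the two contradictions you derive by invoking a perfect matching through a prescribed edge are clean and complete. One small remark: you flag the case $S=\emptyset$ as ``dispatched in a line,'' but in fact the statement as written is false there (with $S=\emptyset$ equality holds yet $G-\emptyset=G$ is itself an even component); this is a defect of the statement, not of your proof, and the result is customarily understood for nonempty $S$.

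As for comparison, the paper does not actually supply a proof of this corollary: it is quoted from Lov\'asz~\cite{Lovasz87} with only the remark that it follows ``using Tutte's Theorem.'' Your write-up is the standard derivation and is in the spirit of what the paper intends by that remark.
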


\begin{pro}[\cite{LP86}]\label{pro:mc_is_Bi}
    A matching covered graph $G$ different from $K_2$ is bicritical if and only if  every barrier  of $G$ {is trivial.}
\end{pro}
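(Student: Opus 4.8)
The plan is to prove both implications using Tutte's Theorem (Theorem~\ref{thm:Tutte}) together with the elementary parity observation that, for any $T\subseteq V(G)$, the number of odd components satisfies $o(G-T)\equiv |V(G)|-|T|\pmod 2$. Throughout I would use that $G$, being matching covered and different from $K_2$, has a perfect matching and an even number of vertices (in particular at least four, so that bicriticality is even meaningful).

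For the forward direction I would argue by contraposition: assume $G$ has a nontrivial barrier $B$, so $|B|\ge 2$, and exhibit two vertices whose deletion destroys every perfect matching. Choose distinct $u,v\in B$ and set $S=B\setminus\{u,v\}$. Since $(G-u-v)-S=G-B$, the barrier condition gives
$o\bigl((G-u-v)-S\bigr)=o(G-B)=|B|=|S|+2>|S|$,
so by Tutte's Theorem applied to $G-u-v$ this graph has no perfect matching. This contradicts bicriticality, and hence every barrier of a bicritical $G$ must be trivial.

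For the reverse direction, suppose every barrier of $G$ is trivial and take arbitrary distinct $u,v\in V(G)$; the goal is to find a perfect matching of $G-u-v$. If none existed, Tutte's Theorem would supply a set $S\subseteq V(G)\setminus\{u,v\}$ with $o\bigl((G-u-v)-S\bigr)>|S|$. Putting $T=S\cup\{u,v\}$ gives $G-T=(G-u-v)-S$ and thus $o(G-T)\ge |S|+1=|T|-1$. The crucial step is the parity upgrade: because $|V(G)|$ is even, $o(G-T)\equiv |V(G)|-|T|\equiv |T|\pmod 2$, so $o(G-T)$ cannot equal the opposite-parity value $|T|-1$, forcing $o(G-T)\ge |T|$. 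On the other hand $G$ is matching covered, so Corollary~\ref{cor:M-C-without-even-components} yields $o(G-T)\le |T|$. Combining, $o(G-T)=|T|$ with $|T|\ge 2$, so $T$ is a nontrivial barrier, contradicting the hypothesis. Therefore $G-u-v$ has a perfect matching for all distinct $u,v$, i.e. $G$ is bicritical.

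The argument is short, and both directions rest on the same two tools, so I do not anticipate a serious obstacle. The only genuinely delicate point is the parity step in the reverse direction, which is exactly what converts the strict Tutte inequality into the equality $o(G-T)=|T|$ defining a barrier; aside from that, I would only take care to confirm that the constructed set $T$ is nonempty and satisfies the barrier definition, and that the trivial order-two case is excluded by $G\neq K_2$.
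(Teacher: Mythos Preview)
Your proof is correct in both directions; the parity step is exactly what is needed to turn Tutte's strict inequality into a barrier. The paper itself does not prove this proposition but simply cites it from \cite{LP86}, so there is nothing to compare against beyond noting that your argument is the standard textbook one.
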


A vertex set $S$ of {a matching covered graph} $G$ is a {\em 2-separation} if $|S|=2$, $G-S$ is disconnected and each of the components of $G-S$ is even.
The following corollary can be gotten directly by Proposition \ref{pro:mc_is_Bi}.
\begin{cor}\label{cor:2-vtx_cut_of_Bi}
    Let $G$ be a bicritical graph different from $K_2$ and let $u,v\in V(G)$. If $G-\{u,v\}$ is disconnected, then $\{u,v\}$ is a 2-separation of $G$.
\end{cor}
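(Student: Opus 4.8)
The plan is to isolate the one nontrivial point of the statement, namely that every component of $G-\{u,v\}$ has even order; the condition $|\{u,v\}|=2$ and the disconnectedness of $G-\{u,v\}$ are then either given or immediate, and the three facts together are exactly the definition of a $2$-separation. First I would record that $u\neq v$. Since $G$ is bicritical it has four or more vertices, so it is matching covered on at least four vertices and hence $2$-connected; thus $G-w$ is connected for every single vertex $w$, and the hypothesis that $G-\{u,v\}$ is disconnected forces $u$ and $v$ to be distinct, so $|\{u,v\}|=2$.

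For the parity of the components I would argue by contradiction using Proposition~\ref{pro:mc_is_Bi}. Suppose some component of $G-\{u,v\}$ is odd. Because $G$ is bicritical it has a perfect matching, so $|V(G)|$ is even and $|V(G-\{u,v\})|=|V(G)|-2$ is even as well; consequently the number of odd components of $G-\{u,v\}$ is even, and our assumption upgrades to $o(G-\{u,v\})\geq 2$. On the other hand $G$ is matching covered, so Corollary~\ref{cor:M-C-without-even-components} applied with $S=\{u,v\}$ gives $o(G-\{u,v\})\leq|\{u,v\}|=2$. Hence $o(G-\{u,v\})=|\{u,v\}|$, and since $G$ has a perfect matching, $\{u,v\}$ is by definition a barrier of $G$; it is nontrivial because it contains two vertices. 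This contradicts Proposition~\ref{pro:mc_is_Bi}, which asserts that every barrier of the bicritical graph $G\neq K_2$ is trivial. Therefore no component of $G-\{u,v\}$ is odd.

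Combining the three conclusions, that $|\{u,v\}|=2$, that $G-\{u,v\}$ is disconnected, and that every component of $G-\{u,v\}$ is even, yields that $\{u,v\}$ is a $2$-separation, as required. I expect no genuine obstacle here: the argument is a short deduction from the definitions, and the one step deserving care is the counting that promotes ``there is an odd component'' to $o(G-\{u,v\})\geq 2$, since it is this inequality that lets Corollary~\ref{cor:M-C-without-even-components} force equality and thereby produce the forbidden nontrivial barrier. (One could also bypass barriers altogether: a perfect matching of $G-\{u,v\}$, which exists by bicriticality, restricts to a perfect matching of each component, so every component is automatically even; I would still present the barrier route, as it is the deduction from Proposition~\ref{pro:mc_is_Bi} advertised in the text.)
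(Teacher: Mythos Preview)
Your proof is correct and follows exactly the route the paper indicates: the paper gives no explicit proof but states that the corollary follows directly from Proposition~\ref{pro:mc_is_Bi}, and your argument (promote an odd component to $o(G-\{u,v\})\ge 2$ by parity, then use $o(G-\{u,v\})\le 2$ to obtain a nontrivial barrier and contradict Proposition~\ref{pro:mc_is_Bi}) is precisely the intended deduction. The preliminary check that $u\neq v$ via $2$-connectedness is a nice touch, and your parenthetical alternative via a perfect matching of $G-\{u,v\}$ is also valid.
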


Let $G$ be a matching covered graph.  If there exists a barrier $B$ of $G$ and an odd component $Q$ of $G-B$ such that $C=\partial(V(Q))$, we say  that the edge cut $C$  is a {\em barrier-cut} (associated with $B$).
Let $\{u,v\}$ be a 2-separation of $G$, and let us divide the components of $G-\{u,v\}$ into two nonempty subgraphs $G_1$ and $G_2$. The cuts $\partial(V(G_1)+u)$ and $\partial(V(G_1)+v)$ are both {\em 2-separation cuts} associated with $\{u,v\}$ of $G$.
%It can be checked that Barrier-cuts and 2-separation cuts are tight cuts.
Barrier-cuts and 2-separation cuts, which are tight cuts, play an important role during tight cut decomposition.
A barrier-cut $\partial(X)$ associated with a barrier $B$ of  $G$ is called a {\em special barrier-cut} if  $G[X]$ is the only one nontrivial odd component of $G-B$.
The following result can be gotten by the definition of near-brick directly (see Proposition 4.18 in \cite{Lucchesi2024} for example).
\begin{pro}[\cite{Lucchesi2024}]\label{pro:near-brick}
     Let $G$ be a near-brick. Then every tight cut of $G$ is a special barrier-cut.
\end{pro}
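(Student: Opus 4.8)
The plan is to reduce an arbitrary tight cut to a \emph{bipartite} contraction and then recover the barrier by a counting argument. Let $\partial(X)$ be a tight cut of the near-brick $G$. Since $G$ has a perfect matching and $\partial(X)$ is tight, the shore $X$ has odd cardinality. Because the number of bricks is additive along a single tight cut, the number of bricks of $G$ equals the number of bricks of $G/X$ plus the number of bricks of $G/\overline{X}$; as $G$ is a near-brick this sum is $1$, so exactly one of the two $\partial(X)$-contractions contains no brick and is therefore bipartite, while the other is itself a near-brick. (Neither can be non-bipartite, for then $G$ would have at least two bricks; and both cannot be bipartite, for then $G$ would be bipartite, contradicting that a near-brick contains a non-bipartite brick.) Since $\partial(X)=\partial(\overline{X})$, I may assume without loss of generality that $G/X$ is bipartite.

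First I would unpack the bipartite structure of $G/X$. Write the image of $X$ as the single vertex $x$, and let $(U,W)$ be the bipartition of $G/X$ with $x\in U$. Because a matching covered bipartite graph has equal color classes, $|U|=|W|$, hence $|W|=1+|U\setminus\{x\}|$. The edges incident with $x$ in $G/X$ are exactly the images of the cut $\partial(X)$, and bipartiteness forces all of them to land in $W$; thus every edge of $\partial(X)$ joins $X$ to $W$. Likewise, each vertex of $U\setminus\{x\}$ has all of its $G/X$-neighbours in $W$, so in $G$ it is adjacent only to $W$. Consequently, in $G-W$ the set $X$ is a union of components and each vertex of $U\setminus\{x\}$ is an isolated vertex.

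The decisive step is the counting argument via Corollary \ref{cor:M-C-without-even-components}. The $|W|-1$ vertices of $U\setminus\{x\}$ are trivial odd components of $G-W$, and since $|X|$ is odd the subgraph $G[X]$ contributes an odd, hence positive, number of odd components; therefore $o(G-W)\ge |W|$. On the other hand Corollary \ref{cor:M-C-without-even-components} gives $o(G-W)\le |W|$, so equality holds: $W$ is a barrier and $G-W$ has no even component. Combined with the exact count $o(G-W)=|W|$, this leaves $G[X]$ with exactly one component $Q$, which is odd. Hence $\partial(X)=\partial(V(Q))$ is a barrier-cut associated with the barrier $W$, and $Q$ is the only nontrivial odd component of $G-W$ (the remaining odd components being the isolated vertices), so $\partial(X)$ is a special barrier-cut, as required.

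I expect the main obstacle to be the first step: justifying that exactly one contraction is bipartite. This rests on the invariance (additivity) of the number of bricks under a tight cut, a consequence of Lov\'asz's tight cut decomposition theorem, together with the fact that a matching covered graph contains no brick precisely when it is bipartite. Once this is in hand, the remaining steps are careful bookkeeping of adjacencies across the contraction and one application of Corollary \ref{cor:M-C-without-even-components}. A minor point to check separately is the degenerate case $|W|=1$: this corresponds to a trivial cut $\partial(u)$ (taken on the large shore $\overline{X}=V(G)\setminus\{u\}$), where there are no isolated vertices and $G-u$ is a single odd component, so the conclusion persists with barrier $\{u\}$.
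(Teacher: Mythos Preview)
Your argument is correct. The paper does not actually supply a proof of this proposition; it merely cites it from \cite{Lucchesi2024} with the remark that it ``can be gotten by the definition of near-brick directly''. What you have written is precisely the standard argument behind that remark: use additivity of the brick count along a tight cut to force one $\partial(X)$-contraction to be bipartite, read off the barrier $W$ as the colour class opposite the contracted vertex, and apply Corollary~\ref{cor:M-C-without-even-components} to pin down $G[X]$ as the unique nontrivial odd component of $G-W$.

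One cosmetic point: the parenthetical ``Neither can be non-bipartite, for then $G$ would have at least two bricks'' is phrased backwards; you mean that \emph{both} cannot be non-bipartite. The mathematics is unaffected. Also note that your relabelling automatically forces $|X|\ge 3$ (since $G/X$ bipartite with $|X|=1$ would make $G$ itself bipartite), so $G[X]$ is genuinely nontrivial and the ``special'' qualifier is justified without a separate case analysis; your closing remark about $|W|=1$ is the complementary degenerate case on the $\overline{X}$ side and is handled correctly.
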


\subsection{ The splicing of two graphs and robust cuts}

Let $G$ and $H$ be two vertex-disjoint graphs and let $u$ and $v$ be vertices of $G$ and $H$, respectively, such that $d_G(u)=d_H(v)$.
Moreover, let $\theta$ be a given bijection between $\partial_H(v)$ and $\partial_G(u)$.
We denote by $(G(u)\odot H(v))_\theta$ the graph obtained from the union of $G-u$ and $H-v$ by joining, for each edge $e$ in $\partial_H(v)$, the end of $e$ in $H$ belonging to $V(H)-v$ to the end of $\theta(e)$ in $G$ belonging to $V(G)-u$;
and refer to $(G(u)\odot H(v))_\theta$ as the graph obtained by \emph{splicing $G$ (at $u$), with $H$ (at $v$), with respect to the bijection $\theta$}, for brevity, to $G(u)\odot H(v)$. We say that $u$ and $v$ are the splicing vertices of $G$ and $H$, respectively.
In general, the graph resulted from splicing two graphs $G$ and $H$ depends on the choice of $u$, $v$ and $\theta$. The following proposition can be gotten by the definition of matching covered graphs directly (see Theorem 2.13 in \cite{Lucchesi2024} for example).

\begin{pro}\label{thm:MC_IS_MC}
    The splicing of two matching covered graphs is also matching covered.
\end{pro}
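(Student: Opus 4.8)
Write $K=(G(u)\odot H(v))_\theta$ and set $U=V(G)-u$ and $W=V(H)-v$, so that $V(K)=U\cup W$. The plan is to verify directly the two defining requirements of a matching covered graph for $K$: that $K$ is connected and nontrivial, and that every edge of $K$ lies in a perfect matching. The key structural remark is that, by construction, the only edges of $K$ joining $U$ and $W$ are the spliced edges; hence $\partial_K(U)$ is exactly the set of spliced edges, and it is in natural bijection (through $\theta$) with $\partial_G(u)$ and with $\partial_H(v)$. For a spliced edge arising from $e\in\partial_H(v)$ with $\theta(e)=e'\in\partial_G(u)$ I will write $f_e$ for the corresponding edge of $K$; its end in $W$ is the end of $e$ in $W$, and its end in $U$ is the end of $e'$ in $U$. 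Note also that $K[U]=G-u$ and $K[W]=H-v$ as graphs.

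The heart of the argument is a lifting construction for perfect matchings. Since $G$ and $H$ are matching covered, each has even order, so $|U|$ and $|W|$ are odd and $|V(K)|$ is even. Any perfect matching $M_G$ of $G$ contains exactly one edge $e'\in\partial_G(u)$ covering $u$; then $M_G-e'$ is a matching of $K[U]$ covering every vertex of $U$ except the end $a$ of $e'$ in $U$. Likewise a perfect matching $M_H$ of $H$ contains a unique $e\in\partial_H(v)$, and $M_H-e$ covers $W$ except the end $b$ of $e$ in $W$. First I would observe that whenever $\theta(e)=e'$, the edge $f_e$ joins precisely $a$ and $b$, so that
\[
M \;=\; (M_G-e')\,\cup\,(M_H-e)\,\cup\,\{f_e\}
\]
is a perfect matching of $K$: the three pieces are pairwise vertex-disjoint and together cover all of $U\cup W$.

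With this construction in hand, covering every edge of $K$ becomes routine. The edges of $K$ are of three kinds: edges of $G$ not at $u$, edges of $H$ not at $v$, and spliced edges. For an edge $g$ of $G$ not at $u$, I would choose (using that $G$ is matching covered) a perfect matching $M_G\ni g$; it uses some $e'\in\partial_G(u)$ with $g\neq e'$, and taking any perfect matching $M_H$ of $H$ through $e=\theta^{-1}(e')$ (which exists since $H$ is matching covered) yields, via the displayed formula, a perfect matching $M$ of $K$ with $g\in M_G-e'\subseteq M$. Edges of $H$ not at $v$ are handled symmetrically, and a spliced edge $f_e$ is captured by choosing $M_G\ni\theta(e)$ and $M_H\ni e$.

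It remains to check connectedness, which does not follow merely from every edge lying in a perfect matching. Here I would invoke the known fact that a matching covered graph on four or more vertices is $2$-connected: since $G$ and $H$ have even order, each of $K[U]=G-u$ and $K[W]=H-v$ is connected, the only remaining order being $2$, where connectedness is trivial. As $d_G(u)=d_H(v)\ge 1$, at least one spliced edge joins $U$ to $W$, so $K$ is connected; and $K$ is nontrivial because $|V(K)|=|V(G)|+|V(H)|-2\ge 2$. The main obstacle, modest as it is, lies in the bookkeeping around $\theta$—keeping straight which end of each spliced edge lies in $U$ and which in $W$—and in not overlooking the connectivity clause, which is the one place where the external $2$-connectivity input is genuinely needed.
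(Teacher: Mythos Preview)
Your proof is correct and is precisely the direct-from-definition verification the paper alludes to; the paper itself does not spell out a proof but merely remarks that the result follows immediately from the definition (citing Theorem~2.13 of \cite{Lucchesi2024}). Your lifting of perfect matchings through the bijection $\theta$ is the standard argument, and your separate treatment of connectedness via the $2$-connectivity of matching covered graphs on at least four vertices is the right way to close the small gap.
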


{\begin{pro}[\cite{Lucchesi2024}]\label{pro:solid-MC}
    A matching covered graph is solid if and only if each of its bricks is solid.
\end{pro}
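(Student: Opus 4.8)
The plan is to argue by induction on the number of vertices, peeling off one nontrivial tight cut at a time, so that the whole statement reduces to a single splitting lemma together with the fact that the list of bricks behaves well under tight-cut contraction and that braces are automatically solid. Recall that the bricks of $G$ are produced by a tight cut decomposition, and that for any nontrivial tight cut $\partial(X)$ of $G$ the bricks of $G$ are exactly the bricks of the two $\partial(X)$-contractions $G_1=G/\overline X$ and $G_2=G/X$ taken together (this is a consequence of the uniqueness of the decomposition, \cite{Lovasz87}). Hence, if I can prove the splitting lemma that \emph{$G$ is solid if and only if both $G_1$ and $G_2$ are solid}, then induction gives at once that $G$ is solid if and only if every terminal graph of the decomposition (a brick or a brace) is solid. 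Since every brace, being a bipartite matching covered graph, is solid (in a bipartite matching covered graph every separating cut is already tight), and a brace contributes no brick, the condition collapses to ``every brick of $G$ is solid'', which is the assertion. The base case is a graph with only trivial tight cuts, i.e. a brick, where the equivalence is a tautology, or a brace, which is covered by the solidity of braces.

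For the splitting lemma I would work with the contrapositive in both directions, calling a separating cut that is not tight a \emph{bad cut}; thus $G$ is non-solid precisely when it has a bad cut, and I must show $G$ has a bad cut if and only if $G_1$ or $G_2$ does. The lifting direction (a bad cut of a contraction yields one of $G$) is the routine half. Given a bad cut $\partial_{G_1}(Y)$ of $G_1$, after possibly replacing $Y$ by its complement I may assume $\bar x\notin Y$, so that $Y\subseteq X\subseteq V(G)$. The identities $(G/Y)/X=G_2$ and $(G/Y)/\overline X=G_1/Y$ exhibit $\partial(X)$ as a separating cut of $G/Y$, whence $G/Y$ is matching covered, while $G/\overline Y=G_1/\overline Y$ is matching covered directly; therefore $\partial_G(Y)$ is separating. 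Moreover a perfect matching of $G_1$ crossing $\partial(Y)$ at least three times glues, along its unique edge in $\partial(X)$, with a perfect matching of $G_2$ into a perfect matching of $G$ crossing $\partial(Y)$ the same number of times, so $\partial_G(Y)$ is bad.

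The substantial half is the projection direction: from a bad cut $\partial(Y)$ of $G$ I must produce a bad cut of $G_1$ or of $G_2$. If $Y$ is laminar with $X$ (after possibly replacing $Y$ by $\overline Y$, so that $Y\subseteq X$ or $Y\subseteq\overline X$), the contraction identities above run in reverse and deposit a bad cut on the corresponding side; the only point is that contracting $\overline X$ (resp. $X$) leaves the number of $M$-edges crossing $\partial(Y)$ unchanged, because $\partial(X)$ is tight and so every perfect matching meets it exactly once. The hard part, and the main obstacle, is the crossing case, in which all four regions $X\cap Y$, $X\setminus Y$, $Y\setminus X$, $\overline{X\cup Y}$ are nonempty. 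Here I would uncross $\partial(Y)$ against $\partial(X)$: parity (both $|X|$ and $|Y|$ are odd) forces exactly one of the pairs $\{X\cap Y,\,X\cup Y\}$ and $\{X\setminus Y,\,Y\setminus X\}$ to consist of odd sets, and only that pair can carry separating cuts; the separating property of the two uncrossed cuts is then supplied by the standard uncrossing lemma for a tight cut against a separating cut.

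The crux is to transfer non-tightness through this uncrossing. A key simplification is that every ``diagonal'' edge between $X\setminus Y$ and $Y\setminus X$ lies in $\partial(X)$, so by tightness of $\partial(X)$ each perfect matching $M$ uses at most one such edge, i.e. the count $d_M$ of diagonal $M$-edges satisfies $d_M\in\{0,1\}$. Feeding this into the inclusion–exclusion identity $|M\cap\partial(X\cap Y)|+|M\cap\partial(X\cup Y)|=|M\cap\partial(X)|+|M\cap\partial(Y)|-2d_M=1+|M\cap\partial(Y)|-2d_M$ shows that a perfect matching crossing $\partial(Y)$ at least three times with $d_M=0$ forces one of the uncrossed cuts to be crossed at least three times, hence bad, after which the laminar case applies. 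The delicate residual subcase is when every ``heavy'' perfect matching has $d_M=1$, i.e. uses the unique diagonal cut edge; there I expect to relocate the witness of non-tightness into a contraction directly, projecting the shore $Y\setminus X$ into $G_2$ and verifying it is bad there. This bookkeeping, rather than any new idea, is where the real effort of the proof concentrates.
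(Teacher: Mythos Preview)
The paper does not supply its own proof of this proposition: it is quoted verbatim from \cite{Lucchesi2024} and used as a black box, so there is no argument in the paper to compare your attempt against.

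That said, your outline is the standard one and is essentially how the result is established in the literature: reduce to a single tight cut via Lov\'asz's uniqueness of the brick list, prove the splitting lemma ``$G$ is solid iff both $\partial(X)$-contractions are solid'' for a tight cut $\partial(X)$, and note that braces are automatically solid. Your lifting direction and the laminar subcase of the projection direction are correct as written. The one place where your sketch is genuinely incomplete is the crossing subcase: you correctly observe that parity selects one of the two uncrossed pairs as odd and that the diagonal count $d_M\in\{0,1\}$, but you do not actually finish the ``$d_M=1$ for every heavy $M$'' branch, nor do you verify that the uncrossed cut you land on is separating in the relevant contraction (you invoke a ``standard uncrossing lemma for a tight cut against a separating cut'' without stating it precisely; this step needs a reference or a short argument, since separating cuts do not uncross in general and it is the tightness of $\partial(X)$ that saves you). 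These are bookkeeping gaps rather than conceptual errors, and once filled your proof would be complete and in line with the treatment in \cite{Lucchesi2024}.
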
}

Let $G$ be a matching covered graph.
A separating cut $C$ of $G$ is a {\em robust cut} if $C$ is not tight and both $C$-contractions of $G$ are near-bricks.
\iffalse
\begin{pro}[\cite{CLM02II}]\label{pro:exist-of-robust}
   Let $G$ be a nonsolid brick. Then $G$ has a robust cut.
\end{pro}
\begin{thm}[\cite{CLM02II}]\label{thm:robust-cut}
    Let $G$ be a brick and $\partial(X)$ be a robust cut of $G$.
    Then, there exists a subset $X'$ of $X$ and a subset $X''$ of $\overline{X}$ such that  $G/\overline{X'}$ and  $G/\overline{X''}$ are bricks and the graph $H$, obtained from $G$ by contracting $X'$ and  ${X''}$  to single vertices  $x'$ and ${x''}$, respectively, is bipartite and matching covered, where  $x'$ and ${x''}$ lie in different color classes of $H$.
\end{thm}
\fi

\begin{thm}[\cite{CLM05}]\label{thm:robust-cut}
    Every nonsolid brick $G$ has a robust cut $C$ such that one of the $C$-contractions of $G$ is solid.
\end{thm}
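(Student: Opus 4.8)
The plan is to locate the desired cut among the non-tight separating cuts of $G$ by a minimality argument, and then to promote it to a robust cut while retaining one solid contraction. Since $G$ is nonsolid it has at least one separating cut that is not tight; as $G$ is a brick, every tight cut is trivial, so such a cut $\partial(X)$ is nontrivial and hence $|X|,|\overline{X}|\ge 2$. In fact both sides are odd, since each contraction is matching covered and therefore of even order. Two things must be achieved: (i) one of the two contractions $G/X$, $G/\overline{X}$ is solid, and (ii) both contractions are near-bricks. If both are secured simultaneously, the cut $C=\partial(X)$ is the robust cut demanded by the statement.

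For (i) I would argue by minimality. Among all non-tight separating cuts of $G$, choose $\partial(X)$ with $|X|$ minimum, orienting the cut so that $X$ is the smaller side; I claim the contraction $G/\overline{X}$ (the graph on $X\cup\{\bar x\}$, where $\bar x$ is the contraction vertex) is solid. Suppose not. Then $G/\overline{X}$ has a non-tight separating cut $\partial(Y)$, and after possibly replacing $Y$ by its complement in $G/\overline{X}$ we may assume $\bar x\notin Y$, i.e.\ $Y\subsetneq X$ with $2\le|Y|<|X|$. I would then verify that $\partial_G(Y)$ is again a non-tight separating cut of $G$, contradicting the minimality of $|X|$ (one checks that $Y$ is the smaller side of $\partial_G(Y)$, since $|\overline{Y}|=|X\setminus Y|+|\overline{X}|>|X|>|Y|$). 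That $\partial_G(Y)$ is separating follows by identifying its two contractions with $\partial_{G/\overline{X}}(Y)$-contractions: $G/\overline{Y}$ is itself a contraction of $G/\overline{X}$ across $\partial(Y)$ and is therefore matching covered, while $G/Y$ carries the cut $\partial_{G/Y}(\overline{X})$ whose two contractions are $(G/\overline{X})/Y$ and $G/X$, both matching covered, so that $G/Y$ is matching covered by Proposition \ref{thm:MC_IS_MC}. That $\partial_G(Y)$ is not tight follows by lifting: a perfect matching $M'$ of $G/\overline{X}$ with $|M'\cap\partial(Y)|\ge 3$ meets $\bar x$ in a single edge $f$, and using that $G/X$ is matching covered one extends $M'$ across $\overline{X}$ to a perfect matching $M$ of $G$; since $Y\subseteq X$ the extending edges lie inside $\overline{X}$ and contribute nothing to $\partial_G(Y)$, and $f$ contributes to $\partial_G(Y)$ exactly when it contributed to $\partial_{G/\overline{X}}(Y)$, whence $|M\cap\partial_G(Y)|=|M'\cap\partial(Y)|\ge 3$.

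It remains to handle (ii), which I expect to be the main obstacle. Having fixed a non-tight separating cut $\partial(X)$ with $G/\overline{X}$ solid, I must ensure that both $G/X$ and $G/\overline{X}$ are near-bricks, i.e.\ that each has exactly one brick in its tight cut decomposition, \emph{without} destroying the solidity secured in step (i). The natural strategy is a second refinement: if a contraction is not a near-brick, its tight cut decomposition exhibits either a bipartite piece or two distinct bricks, from which one extracts a nontrivial tight cut, and one then tries to combine this tight cut with $\partial(X)$ to produce a new non-tight separating cut of $G$ that is strictly closer to robust. The difficulty is precisely that $\partial(X)$ is not tight, so a perfect matching of $G$ need not project to a perfect matching of a contraction; consequently the clean pull-back of a tight cut of the contraction to a nontrivial tight cut of $G$ (which would immediately contradict $G$ being a brick) is unavailable, and the interaction of a non-tight separating cut with a tight cut of its contraction must be controlled directly. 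Here I would invoke the tight cut decomposition together with the near-brick structure of Proposition \ref{pro:near-brick}, the transfer of solidity between a graph and its bricks given by Proposition \ref{pro:solid-MC}, and the fact that a $2$-separation of a bicritical graph cuts it into even pieces (Corollary \ref{cor:2-vtx_cut_of_Bi}) to determine how such tight cuts sit relative to $X$. The technical heart is to show that this refinement can be confined to the non-solid side, so that the solidity of $G/\overline{X}$ from step (i) survives while robustness is attained, and that the procedure terminates; I would aim to guarantee termination by monitoring the same size parameter $|X|$ used in the minimality argument of step (i), arranging each refinement to decrease it.
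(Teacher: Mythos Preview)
This theorem is not proved in the paper: it is quoted from \cite{CLM05} and used as a black box (to derive Corollary~\ref{cor:robust-cut-solid-brick}), so there is no in-paper argument to compare your proposal against. I comment only on the internal soundness of your plan.

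Your step (i) is correct, and in fact the minimality of $|X|$ gives more than you extract. The same dichotomy you use for non-tight separating cuts of $G/\overline{X}$ applies verbatim to any nontrivial \emph{tight} cut $\partial(Z)$ of $G/\overline{X}$ with $Z\subsetneq X$: the lift $\partial_G(Z)$ is separating in $G$ (by the splicing argument you already gave), and then it is either tight in $G$, hence trivial because $G$ is a brick, or non-tight, contradicting the minimality of $|X|$. Thus $G/\overline{X}$ has no nontrivial tight cut at all. A short barrier count rules out bipartiteness (a bipartition of $G/\overline{X}$ with $\bar{x}$ in one colour class would make the other class a nontrivial barrier of the brick $G$), so $G/\overline{X}$ is already a solid \emph{brick}, and in particular a near-brick. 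Half of (ii) therefore comes for free from (i).

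The genuine gap is the other half of (ii), that $G/X$ is a near-brick, and here your sketch is internally inconsistent. You propose to confine all refinements to the non-solid side so that $X$, and with it the solidity of $G/\overline{X}$, is preserved; yet you also propose to guarantee termination by decreasing $|X|$. These cannot both hold: $|X|$ is already minimal by the choice in step (i), so no non-tight separating cut of $G$ has a strictly smaller shore, and any refinement that leaves $X$ intact leaves $|X|$ unchanged. Concretely, a nontrivial tight cut $\partial(W)$ of $G/X$ with $W\subseteq\overline{X}$ does lift to a separating cut $\partial_G(W)$ of $G$, but $|W|$ may well exceed $|X|$, so neither minimality nor any monotone parameter you have named disposes of it. Showing that some non-tight separating cut has \emph{both} contractions near-bricks while one contraction is solid is precisely the content of the cited theorem, and it requires an argument your outline does not supply.
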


\begin{cor}\label{cor:robust-cut-solid-brick}
     Every nonsolid brick $G$ has a robust cut $\partial(X)$ such that there exists a subset $X'$ of $X$ and a subset $X''$ of $\overline{X}$ such that $G/\overline{X'}$ is a solid brick, $G/\overline{X''}$ is a brick and the graph $H$, obtained from $G$ by contracting $X'$ and  ${X''}$  to single vertices $x'$ and ${x''}$, respectively, is bipartite and matching covered, where  $x'$ and ${x''}$ lie in different color classes of $H$.
\end{cor}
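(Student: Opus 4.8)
The plan is to read the corollary off from Theorem~\ref{thm:robust-cut} together with the decomposition theorem for robust cuts of bricks (the classical result of Carvalho, Lucchesi and Murty, cf.\ \cite{CLM05}): from a robust cut $\partial(X)$ of a brick $G$ one extracts $X'\subseteq X$ and $X''\subseteq\overline X$ such that $G/\overline{X'}$ and $G/\overline{X''}$ are bricks, the graph $H$ obtained by simultaneously contracting $X'$ and $X''$ to $x'$ and $x''$ is bipartite and matching covered, and $x',x''$ lie in opposite colour classes of $H$. The only extra ingredient needed is Proposition~\ref{pro:solid-MC}.

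First I would apply Theorem~\ref{thm:robust-cut}: since $G$ is a nonsolid brick it has a robust cut $C$ one of whose $C$-contractions is solid; write $C=\partial(X)$ and choose the labelling of $X$ and $\overline X$ so that $G/\overline X$ is the solid contraction (the cut $\partial(X)=\partial(\overline X)$ is unchanged by this relabelling). Next I would apply the robust-cut decomposition theorem to $\partial(X)$, obtaining $X'\subseteq X$ and $X''\subseteq\overline X$ with $G/\overline{X'}$ and $G/\overline{X''}$ bricks, $H$ bipartite and matching covered, and $x',x''$ in different colour classes of $H$ --- that is, every conclusion of the corollary except the solidity of $G/\overline{X'}$. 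In that decomposition $G/\overline{X'}$ is precisely the (unique) brick of the near-brick $G/\overline X$: it is the brick produced when the tight-cut decomposition of the $C$-contraction $G/\overline X$ is carried out, the requirement $X'\subseteq X$ being exactly the statement that the distinguished contracted vertex of $G/\overline X$ does not survive into that brick (and symmetrically $G/\overline{X''}$ is the brick of $G/X$). Since $G/\overline X$ is a solid matching covered graph with a single brick, Proposition~\ref{pro:solid-MC} gives that $G/\overline{X'}$ is solid, and the proof is complete.

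The work is bookkeeping rather than a new idea: one must check that the ``solid side'' supplied by Theorem~\ref{thm:robust-cut} is indeed the side from which $X'$ (and not $X''$) is extracted, and that $G/\overline{X'}$ as produced by the decomposition theorem coincides with the brick of the solid near-brick $G/\overline X$ --- i.e.\ that contracting $\overline X$ first and then completing the tight-cut decomposition is compatible with the choice of $X'$. This is the step I expect to be the main obstacle; once the identification of $G/\overline{X'}$ with the brick of $G/\overline X$ is pinned down, solidity follows at once from Proposition~\ref{pro:solid-MC}.
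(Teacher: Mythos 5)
Your overall strategy coincides with the paper's at both ends: start from Theorem~\ref{thm:robust-cut} to get a robust cut one of whose contractions is solid, and finish by transferring solidity to the extracted brick via Proposition~\ref{pro:solid-MC}. The difference is the middle, and that is where the genuine gap lies. The ``robust-cut decomposition theorem'' you invoke is not stated anywhere in this paper (it is a result of Carvalho, Lucchesi and Murty from \cite{CLM02II}, not \cite{CLM05}, which is the source of Theorem~\ref{thm:robust-cut}); and, as stated, it only asserts that $G/\overline{X'}$ and $G/\overline{X''}$ are bricks and that $H$ is bipartite matching covered with $x',x''$ in opposite classes. It does \emph{not} assert that $G/\overline{X'}$ is the brick of the near-brick $G/\overline{X}$ in the tight-cut-decomposition sense. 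That identification is exactly what Proposition~\ref{pro:solid-MC} needs: solidity passes only to bricks arising from tight cuts, so you must know that $\partial(X')$ is a tight cut of $G/\overline{X}$ (equivalently, since $G/\overline{X}$ is solid, that $\partial(X')$ is at least \emph{separating} in $G/\overline{X}$, i.e.\ that $(G/\overline{X})/X'$ is matching covered). Nothing in the quoted statement gives this; being a brick says only that $G/\overline{X'}$ has no nontrivial tight cuts, not that it was reached by tight-cut contractions from $G/\overline{X}$. You flag this identification yourself as the main obstacle and then dismiss it as bookkeeping, but it is precisely the content that has to be proved.

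The paper closes this gap by redoing the extraction from scratch so that the tight-cut structure is built in: in each near-brick $G_i$ ($G_1=G/X$, $G_2=G/\overline{X}$) it takes a \emph{maximum} barrier $B_i$ (which must contain the contraction vertex, since $G$ is a brick) and the special barrier-cut $\partial(Y_i)$ at its unique nontrivial odd component (Proposition~\ref{pro:near-brick}); a maximality argument on barriers shows that $G_i/(\overline{Y_i}\to\overline{y_i})$ is a brick, and since $\partial(Y_i)$ is a barrier cut, hence tight, Proposition~\ref{pro:solid-MC} applies to the solid side and the bipartite graph $H=H_1(x)\odot H_2(\overline{x})$ with the colour-class condition comes out of the same construction. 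So to repair your proposal you would either have to cite the \cite{CLM02II} result in the stronger form in which the extracted bricks are identified as \emph{the} bricks of the respective near-bricks (and verify that form), or supply the barrier/tight-cut construction of $X'$ and $X''$ yourself --- which is essentially the paper's proof.
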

\begin{proof}
     By Theorem \ref{thm:robust-cut}, assume that $\partial(X)$ is a robust cut of $G$ such that one of the $\partial(X)$-contractions of $G$ is solid.
    Let $G_1=G/(X\to x)$ and let $G_2=G/(\overline{X}\to \overline{x})$.
    Then $G_1$ and $G_2$ are near-bricks.
    Then every tight cut of $G_1$ and $G_2$ is a special barrier-cut by Proposition \ref{pro:near-brick}.
    For each $i\in\{1,2\}$, we may assume that $\partial(Y_i)$ is a special barrier-cut associated with a maximum barrier $B_i$ of $G_i$, such that $G[Y_i]$ is the only  nontrivial component of $G_i-B_i$.

    If $B_1$ is not trivial in $G_1$, then $x\in B_1$, as $B_1$ is not a barrier of $G$ (note that $G$ is a brick).
    Let $G_1'= G_1/(\overline{Y_1}\to\overline{y_1})$.
    If $G_1'$ is not a brick, then $G_1'$ has a nontrivial tight cut.
    By Proposition \ref{pro:near-brick} again, we may assume that $B_1'$ is a nontrivial barrier of $G_1'$ such that there exists a special barrier-cut associated with it.
    %which is associated with a nontrivial special barrier-cut of $G_1'$.
    Then we have $\overline{y_1}\in B_1'$ (otherwise, it can be checked that $B_1'$ is also a nontrivial barrier of $G$).
    Thus,  it can be checked that $B_1\cup B_1'\setminus\{\overline{y_1}\}$ is a barrier of $G_1$, such that $|B_1|< |B_1\cup B_1'\setminus\{\overline{y_1}\}|$, as $B_1'$ is nontrivial. It contradicts the assumption that $B_1$ is a maximum barrier of $G_1$.
    Thus, $G_1'$ is a brick.
    If $B_1$ is trivial, then  $G_1$ is a brick (in this case, $B_1=\{x\}$).
    Similarly, $\overline{x}\in B_2$ and $\partial(Y_2)$ is a special barrier-cut of $G_2$, such that $G_2/(\overline{Y_2}\to\overline{y_2})$ is a brick.
    Note that one of $G_1$ and $G_2$, say $G_1$,  is solid. Then $G'_1$ is a solid brick by Proposition \ref{pro:solid-MC}.

    For each $i\in\{1,2\}$, let $H_i=G_i/(Y_i\to y_i)$.
    Then for each $i\in\{1,2\}$, $H_i$ is bipartite and matching covered (as $\partial(Y_i)$ is a special barrier-cut in $G_i$).
    Note that $x\in B_1$ and $\overline{x}\in B_2$, that is, $x\in V(H_1)$ and $\overline{x}\in V(H_2)$.
    Then $(G/Y_1)/Y_2=H_1(x)\odot H_2(\overline{x})$.
    Let $H=H_1(x)\odot H_2(\overline{x})$.
    We will complete the proof by showing that $H$ is a matching covered bipartite graph, and $y_1$ and $y_2$ lie in different color classes of $H$.
    Recall that $H_1$ and $H_2$ are bipartite and matching covered. Then $H$ is a matching covered bipartite graph by Proposition \ref{thm:MC_IS_MC}.
    As $y_1$ and $x$ lie in different color classes of $H_1$, and $y_2$ and $\overline{x}$ lie in different color classes of $H_2$, $y_1$ and $y_2$ lie in the different color classes of $H$. Therefore, the result holds by letting $X'=Y_1$ and $X''=Y_2$.
\end{proof}

\begin{lem}[\cite{LX2024}]\label{bipar-H-nonre}
    Let $\partial(X)$ and $\partial(Y)$ be two robust cuts of a brick $G$ such that  $G/{X}$ and $G/{Y}$ are bricks, and $(G/(\overline{X}\rightarrow\overline{x}))/(\overline{Y}\rightarrow\overline{y})$ is a bipartite graph $H$.
    Then every edge incident with $\overline{x}$ is removable in $H$.
\end{lem}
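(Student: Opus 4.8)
\medskip
\noindent\textbf{Proof proposal.}\quad The plan is to fix an edge $e=\overline x w$ of $H$ incident with $\overline x$ and prove that $H-e$ is matching covered; as $e$ is arbitrary, this gives the lemma. The hypothesis forces $\overline X\cap\overline Y=\emptyset$ and $X\cup Y=V(G)$, so $V(G)$ is partitioned into $\overline X$, $\overline Y$ and $Z:=X\cap Y$, and $V(H)=Z\cup\{\overline x,\overline y\}$; let $(A,B)$ be the bipartition of the (matching covered) graph $H$ with $\overline x\in A$, so $w\in B$. Each edge of $G$ at a vertex of $Z$ survives in $H$ with its multiplicity, while $\partial_G(X)$ and $\partial_G(Y)$ become $\partial_H(\overline x)$ and $\partial_H(\overline y)$; since $G$, being a brick, is $3$-connected and hence $3$-edge-connected, we get $d_H(\overline x),d_H(\overline y)\ge3$ and $d_H(z)=d_G(z)\ge3$ for $z\in Z$, so $\delta(H)\ge3$. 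In particular $H$ has at least four vertices, hence is $2$-connected \cite{LP86}, so $H-e$ is connected; and as $d_H(\overline x)\ge3$, not every perfect matching of $H$ uses $e$, so $H-e$ has a perfect matching. It remains to show every edge of $H-e$ lies in a perfect matching of $H-e$.

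For an edge $f=a_0b_0$ of $H-e$ this fails only if $(H-\{a_0,b_0\})-e$ has no perfect matching; since $H$ is bipartite, such a failure forces a Hall violation in that graph, and because $|N_H(S)|\ge|S|+1$ for every $\emptyset\ne S\subsetneq A$ and every $\emptyset\ne S\subsetneq B$ (as $H$ is matching covered), while $\delta(H)\ge3$ kills the singleton cases, a short analysis shows this can occur only if $H$ has a \emph{minimally deficient} set $S$, i.e.\ one with $|N_H(S)|=|S|+1$, positioned so that deleting $e$ destroys its single unit of slack: either $\overline x\in S\subsetneq A$ with $N_H(w)\cap S=\{\overline x\}$, or $w\in S\subsetneq B$ with $N_H(\overline x)\cap S=\{w\}$. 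So it suffices to prove no such $S$ exists.

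Assume $S$ exists; I would pull it back along the chain of contractions $G\to G^{*}:=G/\overline Y\to H$. The point of routing through $G^{*}$ is that $G^{*}$ is a near-brick (since $\partial_G(Y)$ is a robust, hence separating, cut) and that $\partial_{G^{*}}(\overline X)$ is a separating cut of $G^{*}$ whose two contractions are precisely $H=G^{*}/\overline X$ and $G/X=G^{*}/(Z\cup\{\overline y\})$; moreover, since $G/X$ is a brick it is bicritical, so $(G/X)-\{x,u\}=G[\overline X]-u$ has a perfect matching for every $u\in\overline X$, whence $G[\overline X]=(G/X)-x=G^{*}[\overline X]$ is factor-critical and perfect matchings of $H$ lift across $\partial_{G^{*}}(\overline X)$ to perfect matchings of $G^{*}$. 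Using these facts, the set $S$ together with $N_H(S)$ produces a vertex set of $G^{*}$ whose structure, after contracting $Z\cup\{\overline y\}$ to pass from $G^{*}$ to $G/X$, yields in $G/X$ either a nontrivial tight cut --- which by Proposition~\ref{pro:near-brick} must be a barrier-cut or a $2$-separation cut --- or a vertex cut of size at most two, or a nontrivial barrier; each of these contradicts that $G/X$ is a brick. Interchanging the roles of $X$ and $Y$ (and of the two colour classes) handles the second position of $S$ via $G/Y$.

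The crux, where essentially all the work lies, is this pull-back: passing to $G/X$ collapses the part $Z$ on which $S$ partially lives, so $S$ cannot be transported directly, and one must use at once the cut structure of $\partial_{G^{*}}(\overline X)$ inside the near-brick $G^{*}$ and the absence of nontrivial tight cuts in its contraction $G/X$; one must also separately dispose of the small-set degeneracies, which is precisely where the hypotheses that $\partial(X),\partial(Y)$ are robust and that $G$ is a brick are used.
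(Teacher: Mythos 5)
The paper itself gives no proof of this lemma (it is imported from \cite{LX2024}), so the only question is whether your argument stands on its own, and it does not: the decisive step is missing. Your first two paragraphs are fine in outline --- reducing removability of $e=\overline{x}w$ to the nonexistence of a set $S$ with $\overline{x}\in S\subsetneq A$, $|N_H(S)|=|S|+1$ and $N_H(w)\cap S=\{\overline{x}\}$ is essentially the standard bipartite criterion (compare Lemma~\ref{lem:nonre-bi} and Proposition~\ref{pro:tight-cut-in-bipartiteMC}) --- but the third paragraph, where the lemma would actually be proved, never constructs anything. You assert that $S$ together with $N_H(S)$ ``produces a vertex set of $G^{*}$ whose structure \dots yields in $G/X$ either a nontrivial tight cut, or a vertex cut of size at most two, or a nontrivial barrier'', but you do not say which vertex set, do not verify tightness or the barrier property, and do not show how the hypotheses that $\partial(X)$ and $\partial(Y)$ are robust (rather than merely separating) and that $G/X$, $G/Y$ are bricks enter; your closing paragraph concedes that ``essentially all the work'' lies in exactly this pull-back, which is not carried out. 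Note that a tight-cut-like structure in $H$ does not in general lift across a separating but non-tight cut, because perfect matchings of $G$ may cross $\partial(X)$ or $\partial(Y)$ more than once; this is precisely where the content of the lemma lies, and declaring its outcome is not a proof.

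A smaller but genuine slip: the claim that ``$\delta(H)\ge3$ kills the singleton cases'' is unjustified, since $H$ may have multiple edges, so $d_H(\overline{x})\ge 3$ does not prevent $\overline{x}$ from having exactly two distinct neighbours; in that situation $S=\{\overline{x}\}$ satisfies your deficiency condition and must be excluded by the same (missing) structural argument, not by a degree count. In sum, the proposal is a reasonable plan with a correct but routine reduction; the core of the lemma --- deriving a contradiction with the brick and robustness hypotheses from the deficient set $S$ --- is absent.
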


\subsection{Removable classes}

 We may assume that an edge $e$ is removable in a matching covered graph $G$  if $e\notin E(G)$. The following lemma is easy to verify by the definition (e.g., see Propositions 8.7 and 8.8 in \cite{Lucchesi2024}).
\begin{lem}\label{lem:re_also_re}
    Let $C$ be a separating cut of a matching covered graph $G$. If an edge $e$ is removable in both $C$-contractions of $G$ then $e$ is removable in $G$.
    Moreover,  if $C$ is tight, then an edge $e$ is removable in $G$ if and only if $e$ is removable in both $C$-contractions of $G$.
\end{lem}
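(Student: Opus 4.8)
The plan is to exploit the fact that a matching covered graph is recovered from its two contractions along any cut by the splicing operation, so that removability questions in $G$ can be transferred to the contractions and back, edge by edge. Write $C=\partial(X)$ and record where $e$ can sit: either $e\in E(G[X])$, or $e\in E(G[\overline{X}])$, or $e\in\partial(X)$. An edge of $E(G[X])$ survives only in $G/\overline{X}$, an edge of $E(G[\overline{X}])$ survives only in $G/X$, while an edge of $\partial(X)$ survives in both; in the case that $e$ is absent from a contraction we treat it as removable there by the convention that $e$ is removable in $H$ whenever $e\notin E(H)$. Throughout I will use that $G$ is the splicing $\bigl(G/\overline{X}\bigr)(\overline{x})\odot\bigl(G/X\bigr)(x)$ with respect to the natural bijection matching the edges of $\partial(X)$.

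For the first assertion, suppose $e$ is removable in both $G/X$ and $G/\overline{X}$. First I would verify that $\partial(X)$ is a separating cut of $G-e$ by computing its two contractions. If $e\in E(G[X])$ then $(G-e)/X=G/X$, which is matching covered, while $(G-e)/\overline{X}=(G/\overline{X})-e$, which is matching covered because $e$ is removable in $G/\overline{X}$; the cases $e\in E(G[\overline{X}])$ and $e\in\partial(X)$ are symmetric, the latter removing $e$ from both contractions and dropping the degree at $x$ and $\overline{x}$ by one in matching fashion. Hence in every case $(G-e)/X$ and $(G-e)/\overline{X}$ are matching covered, and $G-e$ is precisely their splicing; by Proposition \ref{thm:MC_IS_MC} the graph $G-e$ is matching covered, that is, $e$ is removable in $G$.

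For the second assertion, assume in addition that $C$ is tight. The implication from removability in both contractions to removability in $G$ is exactly the first assertion, since every tight cut is separating. For the converse, suppose $e$ is removable in $G$, so $G-e$ is matching covered. The key step is to show that $C$ remains tight in $G-e$: every perfect matching of $G-e$ is a perfect matching of $G$ that avoids $e$, and for such a matching $M$ one has $|\partial_{G-e}(X)\cap M|=|\partial_G(X)\cap M|=1$ because $C$ is tight in $G$ (when $e\in\partial(X)$ this uses $e\notin M$). Thus $C$ is a tight, hence separating, cut of the matching covered graph $G-e$, so both $(G-e)/X$ and $(G-e)/\overline{X}$ are matching covered. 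Since these contractions equal $(G/X)-e$ and $(G/\overline{X})-e$ whenever $e$ survives, and equal the corresponding contraction otherwise, I conclude that $e$ is removable in both $G/X$ and $G/\overline{X}$.

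The routine part is the bookkeeping across the three locations of $e$ together with the absent-edge convention; the only genuinely new input is the tightness-inheritance argument in the converse direction, which is where tightness (rather than mere separation) is essential, and the single place where the hypothesis $|\partial(X)\cap M|=1$ is actually used. I expect no real obstacle beyond keeping the contraction identities such as $(G-e)/X=(G/X)-e$ straight in the boundary case $e\in\partial(X)$.
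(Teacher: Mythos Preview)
Your proposal is correct. The paper does not actually supply its own proof of this lemma; it merely remarks that the statement ``is easy to verify by the definition'' and points to Propositions~8.7 and~8.8 of Lucchesi and Murty's book. Your argument is precisely the standard verification one would give: use the identity $G=(G/\overline{X})(\overline{x})\odot(G/X)(x)$ together with Proposition~\ref{thm:MC_IS_MC} for the forward direction, and for the converse under tightness observe that perfect matchings of $G-e$ are perfect matchings of $G$, so $C$ stays tight in $G-e$ and hence separating there. The case split on the location of $e$ and the absent-edge convention are handled correctly, and the contraction identities $(G-e)/X=(G/X)-e$ etc.\ are exactly what is needed. There is nothing to add.
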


\begin{lem}[Lemma 3.1 in \cite{CLM02II}]\label{lem:removable_doubleton}
    Let $C=\partial(X)$ be a separating cut but not a tight cut of a matching covered graph $G$ and let $H=G/\overline{X}$.
    Suppose that $H$ is a brick, and let $R$ be a removable doubleton of $H$.
    If $R\cap C=\emptyset$ or if the edge of $R\cap C$ is removable in $G/X$ then $R\setminus C$ contains an edge which is removable in $G$.
\end{lem}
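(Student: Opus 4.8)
Write $R=\{e,f\}$ and $H=G/(\overline X\to\overline x)$. Since a removable doubleton contains no individually removable edge, I cannot hope to apply Lemma \ref{lem:re_also_re} to a single edge of $R$ inside $H$; the whole point must be that expanding $\overline x$ back into the matching covered graph on $\overline X$ repairs the obstruction that prevents the edges of $R$ from being removable in $H$, and that the repair is forced by $C$ being \emph{non-tight}. My first step is to show that $G-e-f$ is matching covered. Since $C$ is separating but not tight, $|C|\ge 3$, so after deleting the edges of $R$ the cut $C$ remains a genuine edge cut of $G-e-f$, and $G-e-f$ is precisely the splice of its two $C$-contractions at $x$ and $\overline x$. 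The contraction $(G-e-f)/\overline X$ equals $H-e-f$, which is matching covered because $R$ is a removable doubleton of $H$. The contraction $(G-e-f)/X$ equals $G/X$ when $R\cap C=\emptyset$ (both $e,f$ collapse to loops) and equals $(G/X)-f$ when $f\in C$; the former is matching covered because $C$ is separating, and the latter by the hypothesis that $f$ is removable in $G/X$. By Proposition \ref{thm:MC_IS_MC} the splice of two matching covered graphs is matching covered, so $G-e-f$ is matching covered.

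The next step localises the obstruction. Because $G-e-f$ is matching covered, every edge of $G$ other than $e$ and $f$ lies in a perfect matching of $G-e-f$, hence in one of $G-e$ and in one of $G-f$. Therefore $G-e$ can fail to be matching covered only if $f$ lies in no perfect matching of $G-e$, and symmetrically for $G-f$ and $e$. Thus in the case $R\setminus C=\{e\}$ it suffices to produce a perfect matching of $G$ containing $f$ and avoiding $e$, while in the case $R\setminus C=\{e,f\}$ it suffices to produce a perfect matching of $G$ containing \emph{exactly one} of $e,f$: for if no such matching existed, then $e$ and $f$ would be mutually dependent and $\{e,f\}$ would itself be a removable doubleton of $G$.

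The crux is to break this dependency using non-tightness. Since $\{e,f\}$ is a removable doubleton of the brick $H$, every perfect matching of $H$ contains both or neither of $e,f$; equivalently, among perfect matchings of $G$ meeting $C$ in a single edge, $e$ and $f$ are mutually dependent, the certificate being a nontrivial barrier $B$ of $H-e$ containing both ends of $f$. The decisive observation is that, as $C$ is not tight, $G$ has perfect matchings meeting $C$ in at least three edges; these are invisible in $H$ and need not respect the dependency. I would construct such a matching using $f$ together with two further cut edges $c_2,c_3$, chosen so that the $X$-side completes to a perfect matching of $X\setminus\{\alpha,\alpha_2,\alpha_3\}$ avoiding $e$ (where $\alpha,\alpha_2,\alpha_3$ are the ends in $X$ of $f,c_2,c_3$) and the $\overline X$-side completes to a perfect matching of $\overline X\setminus\{\beta,\beta_2,\beta_3\}$. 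Existence of the $X$-side matching is read off from the barrier $B$: it guarantees a surplus of two odd components, so deleting one vertex from each of two suitable odd components (reached by cut edges since $H$ is $3$-connected) and matching the remainder is possible, as seen via Theorem \ref{thm:Tutte} and Corollary \ref{cor:M-C-without-even-components}; existence of the $\overline X$-side matching uses that $G/X$, respectively $(G/X)-f$, is matching covered. Combining the two sides with $\{f,c_2,c_3\}$ produces a perfect matching of $G$ containing $f$ but not $e$ (and the analogous construction yields one containing exactly one of $e,f$ in the case $R\setminus C=\{e,f\}$), which is what the localisation step requires.

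I expect this last step to be the main obstacle. The two required near-perfect matchings live on opposite sides of $C$ but are coupled through the single choice of the pair $c_2,c_3$, so the genuine work is to pick these two cut edges so that both sides can be completed \emph{simultaneously}. This is exactly where the hypotheses enter: non-tightness of $C$ supplies the extra cut edges, $3$-connectivity of the brick $H$ lets the cut reach the two odd components singled out by the barrier $B$, and removability of $f$ in $G/X$ guarantees that the $\overline X$-side stays matchable after $f$ is consumed. By contrast the first two steps are routine once the splicing viewpoint of Proposition \ref{thm:MC_IS_MC} is adopted.
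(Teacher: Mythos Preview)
The paper does not prove this lemma at all; it is quoted as Lemma~3.1 of \cite{CLM02II} and used as a black box. There is therefore no in-paper proof to compare your attempt against.

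On the merits of your outline: your first two steps are sound. The splicing argument showing that $G-e-f$ is matching covered is clean and correct, and the reduction to producing a perfect matching of $G$ containing exactly one of $e,f$ (equivalently, a matching of $G-e$ through $f$) is valid.

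Your third step, however, is not a proof, and the gaps are more than cosmetic. First, the detailed construction you describe (``$f$ together with two further cut edges $c_2,c_3$, \dots ends in $X$ of $f,c_2,c_3$'') silently assumes $f\in C$; when $R\cap C=\emptyset$ both $e$ and $f$ have all their ends in $X$, so the geometry is different and ``the analogous construction'' is not analogous. Second, and more seriously, you have not shown how to choose $c_2,c_3$ so that both shores complete simultaneously. Knowing that a barrier $B$ of $H-e$ contains $\overline x$ tells you something about the component structure of $(H-e)-B$ inside $X$, but converting that into the existence of a perfect matching of $G[X]\setminus\{\alpha,\alpha_2,\alpha_3\}$ that avoids $e$, for some specific $\alpha_2,\alpha_3$ reachable by cut edges whose other ends can also be completed on the $\overline X$ side, is exactly the hard coupling problem, and ``read off from the barrier $B$'' does not do it. You would at minimum need to exploit that $H-e-f$ is bipartite (a standard consequence of $R$ being a removable doubleton of a brick) to control where the odd components and the cut edges sit relative to the colour classes; without that, the existence claim for the $X$-side matching is unsupported. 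Your outline points in the right direction, but the decisive combinatorial step is still missing.
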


We shall denote a bipartite graph $G$ with bipartition $(A,B)$ by $G[A,B]$.
 The following proposition which can be derived from the definition of bipartite matching covered graphs will be used in the following text.
\begin{pro}\label{pro:mc-emptyset}
    Let $G[A,B]$ be a matching covered graph. Assume that $X\subseteq V(G)$ such that $N(X\cap A)\subseteq X\cap B$ ($X$ is not necessary nonempty). Then $|X\cap A|\le|X\cap B|$. Moreover,
    $|X\cap A|=|X\cap B|$ if and only if {either $X=\emptyset$ or $X=V(G)$.}
\end{pro}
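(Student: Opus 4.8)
The plan is to work with a single well-chosen perfect matching and to exploit, in the equality case, the defining property of matching covered graphs. Throughout write $X_A=X\cap A$ and $X_B=X\cap B$. Since $G$ is matching covered it is connected and nontrivial, hence possesses a perfect matching; in particular $|A|=|B|$.

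For the inequality $|X_A|\le|X_B|$, fix any perfect matching $M$ of $G$. Each vertex $a\in X_A$ is covered by an edge of $M$ whose other end lies in $N(a)\subseteq N(X_A)\subseteq X_B$. Sending $a$ to that other end gives a well-defined injection $X_A\to X_B$ (distinct vertices of $X_A$ get distinct $M$-partners), so $|X_A|\le|X_B|$.

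For the equality statement, the ``if'' direction is immediate: $X=\emptyset$ gives $|X_A|=|X_B|=0$, and $X=V(G)$ gives $|X_A|=|A|=|B|=|X_B|$. For the ``only if'' direction, suppose $|X_A|=|X_B|$ while $\emptyset\neq X\neq V(G)$, and derive a contradiction. Since $G$ is connected and $X$ is a proper nonempty subset, $\partial(X)\neq\emptyset$. Because $G$ is bipartite, every edge of $\partial(X)$ joins either $X_A$ to $B\setminus X_B$ or $X_B$ to $A\setminus X_A$; the first type is excluded by $N(X_A)\subseteq X_B$, so there is an edge $e=a'b$ with $b\in X_B$ and $a'\in A\setminus X_A$. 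As $G$ is matching covered, $e$ lies in a perfect matching $M'$. Now repeat the injection argument with $M'$: it yields an injection $X_A\to X_B$ sending each $a\in X_A$ to its $M'$-partner. The vertex $b$ is not in the image of this map, since the $M'$-partner of $b$ is $a'\notin X_A$; hence the image has size at most $|X_B|-1$, giving $|X_A|\le|X_B|-1$, which contradicts $|X_A|=|X_B|$.

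I do not expect a genuine obstacle here; the argument is short. The one point that has to be noticed rather than computed is that the hypothesis $N(X_A)\subseteq X_B$ forces $\partial(X)$ to consist entirely of edges between $X_B$ and $A\setminus X_A$, which is precisely what makes a perfect matching through a cut edge $e=a'b$ break the injection $X_A\to X_B$ and thereby force the strict inequality in the non-degenerate case.
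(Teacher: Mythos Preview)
Your proof is correct. The paper does not actually give a proof of this proposition, remarking only that it ``can be derived from the definition of bipartite matching covered graphs''; your argument does precisely that, using an arbitrary perfect matching for the inequality and a perfect matching through a chosen cut edge to force the strict inequality when $\emptyset\neq X\neq V(G)$.
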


\begin{pro}[\cite{Lovasz87}]\label{pro:tight-cut-in-bipartiteMC}
    Let $G[A,B]$ be a matching covered  graph. An edge cut $\partial(X)$ of $G$ is tight if and only if $||X\cap  A|-|X\cap  B||=1$ and every edge of $\partial(X)$ is incident with a vertex of the larger one between the two sets, $X\cap  A$ and $X \cap B$.
\end{pro}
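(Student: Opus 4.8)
The plan is to prove both directions of the equivalence through a single counting identity applied to an arbitrary perfect matching, invoking the matching-covered hypothesis only to convert a statement about all matchings into a structural statement about edges. Write $A_1=X\cap A$, $B_1=X\cap B$, $A_2=\overline{X}\cap A$ and $B_2=\overline{X}\cap B$. Since $G$ is bipartite every edge joins $A$ to $B$, so the cut decomposes as $\partial(X)=E[A_1,B_2]\cup E[B_1,A_2]$ and these are its only two kinds of edges; note that an edge of $E[A_1,B_2]$ is incident with $A_1$ but not with $B_1$, while an edge of $E[B_1,A_2]$ is incident with $B_1$ but not with $A_1$.

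Fix any perfect matching $M$ and set $a=|M\cap E[A_1,B_2]|$, $b=|M\cap E[B_1,A_2]|$ and $c=|M\cap E[A_1,B_1]|$. Every vertex of $A_1$ is matched by $M$ to a vertex of $B$ lying in $B_1$ or in $B_2$, so $|A_1|=a+c$; likewise every vertex of $B_1$ is matched into $A_1$ or $A_2$, so $|B_1|=b+c$. Subtracting gives the key identity $|A_1|-|B_1|=a-b$, which is independent of $M$, while $|M\cap\partial(X)|=a+b$.

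For necessity, suppose $\partial(X)$ is tight, so $a+b=1$ for every $M$. As $a,b$ are non-negative integers this forces $\{a,b\}=\{0,1\}$, hence $|a-b|=1$ and therefore $\bigl||A_1|-|B_1|\bigr|=1$. By the symmetry between $A$ and $B$ we may assume $A_1$ is the larger set, so $|A_1|-|B_1|=1$ and thus $a-b=1$, which together with $a+b=1$ gives $a=1$ and $b=0$ for every $M$. If some edge $e\in E[B_1,A_2]$ existed, then since $G$ is matching covered $e$ would lie in a perfect matching, contradicting $b=0$; hence $E[B_1,A_2]=\emptyset$, i.e.\ every cut edge lies in $E[A_1,B_2]$ and so is incident with the larger set $A_1$. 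For sufficiency, assume $\bigl||A_1|-|B_1|\bigr|=1$ and, taking $A_1$ larger without loss of generality, that every cut edge is incident with $A_1$; the latter says exactly that $E[B_1,A_2]=\emptyset$. Then $b=0$ for every $M$, and the identity $a-b=|A_1|-|B_1|=1$ forces $a=1$, so $|M\cap\partial(X)|=a+b=1$ for every perfect matching $M$ and $\partial(X)$ is tight.

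The argument is short, so there is no serious obstacle; the two points requiring care are the reformulation of the phrase ``incident with the larger of $X\cap A$ and $X\cap B$'' as the clean structural condition $E[B_1,A_2]=\emptyset$ (respectively $E[A_1,B_2]=\emptyset$ when $B_1$ is larger), and the use of matching coveredness to upgrade ``$b=0$ for every $M$'' to ``no such edge exists at all.'' It is also worth recording why the sizes cannot be equal or differ by more than one: since $a+b\equiv a-b=|A_1|-|B_1|\pmod 2$, if $|A_1|-|B_1|$ is even then $a+b$ is even and never equals $1$, while if $\bigl||A_1|-|B_1|\bigr|\geq 2$ then $a+b\geq|a-b|\geq 2$, so in either case $\partial(X)$ cannot be tight. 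This reconfirms the necessity of the size condition and dovetails with Proposition \ref{pro:mc-emptyset}, whose $A$--$B$ symmetric form applied to $E[B_1,A_2]=\emptyset$ (i.e.\ $N(B_1)\subseteq A_1$) already yields $|B_1|\le|A_1|$, consistent with $A_1$ being the larger side.
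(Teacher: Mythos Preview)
Your proof is correct. Note, however, that the paper does not supply its own proof of this proposition: it is quoted from Lov\'asz \cite{Lovasz87} and stated without argument, so there is nothing in the paper to compare against. Your counting argument via the identity $|A_1|-|B_1|=a-b$ for an arbitrary perfect matching is the standard and expected one, and both directions are handled cleanly; the use of matching-coveredness to pass from ``$b=0$ for every $M$'' to ``$E[B_1,A_2]=\emptyset$'' is exactly the right place to invoke the hypothesis.
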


\begin{lem}[\cite{CLM15}]\label{lem:nonre-bi}
    %Let $G[A,B]$ be a bipartite matching covered graph, and let e be a nonremovable edge of $G$. If $G$ has two or more edges, then there exist partitions $(A',A'')$ of $A$ and $(B',B'')$ of $B$ such that $|A'| = |B'|$, and $e$ is the only edge with one end in $B'$ and one end in $A''$.
    Let $G[A,B]$ be a matching covered graph, and $|E(G)|\ge2$.
    An edge $uv$ of $G$, with $u\in A$ and $v\in B$, is not removable in $G$ if and only if there exist nonempty proper subsets $A_1$ and $B_1$ of $A$ and $B$, respectively, such that:\\
    {\rm 1)} the subgraph $G[A_1\cup B_1]$  is matching covered, and\\
    {\rm 2)} $u\in A_1$ and $v\in B\setminus B_1$, and $E[A_1,B\setminus B_1 ]=\{uv\}$.
\end{lem}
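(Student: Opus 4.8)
The plan is to prove both directions by reasoning inside the bipartite graph $G-uv$ and invoking the classical description of elementary (equivalently, connected matching covered) bipartite graphs: a connected bipartite graph $H[A,B]$ with a perfect matching is matching covered if and only if $|A|=|B|$ and $|N_H(S)|>|S|$ for every nonempty proper subset $S\subsetneq A$. First I would dispose of degenerate cases. If $G$ has only two vertices, then $|E(G)|\ge 2$ forces parallel edges, every edge is removable, and no proper nonempty $A_1\subsetneq A$ exists, so the equivalence holds vacuously. Hence I may assume $G$ has at least four vertices, so that $G$ is $2$-connected with $\delta(G)\ge 2$, the graph $G-uv$ is connected, and, since $d_G(u)\ge 2$ forces some edge at $u$ other than $uv$ to lie in a perfect matching (which then avoids $uv$), $G-uv$ has a perfect matching.

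For the ``if'' direction, suppose such $A_1,B_1$ exist. Condition 2 says that $G-uv$ has no edge from $A_1$ to $B\setminus B_1$, so $N_{G-uv}(A_1)\subseteq B_1$, while $G[A_1\cup B_1]$ being matching covered forces $|A_1|=|B_1|$. Since $A_1\cup B_1$ is a proper nonempty vertex set and $G$ is $2$-connected, the cut $\partial(A_1\cup B_1)$ cannot be the single edge $uv$; as the only edges leaving $A_1\cup B_1$ other than $uv$ lie in $E[A\setminus A_1,B_1]$, there is an edge $ab$ with $a\in A\setminus A_1$ and $b\in B_1$. I then show $ab$ is in no perfect matching of $G-uv$: any such matching would pair $b$ with $a\notin A_1$, forcing the $|A_1|$ vertices of $A_1$ to be matched inside $N_{G-uv}(A_1)\subseteq B_1\setminus\{b\}$, a set of size $|A_1|-1$, which is impossible. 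Thus $G-uv$ is not matching covered and $uv$ is not removable.

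For the ``only if'' direction, suppose $uv$ is not removable, so $G-uv$ is connected, has a perfect matching, but is not matching covered. By the elementary characterization there is a nonempty proper $S\subsetneq A$ with $|N_{G-uv}(S)|=|S|$. Because $G$ itself is matching covered, $|N_G(S)|>|S|$, so deleting $uv$ strictly decreased the neighbourhood of $S$; this can happen only if $u\in S$ and $v$ is joined to $S$ in $G$ solely through $uv$, giving $N_G(S)=N_{G-uv}(S)\cup\{v\}$ with $v\notin N_{G-uv}(S)$. I then pass to a minimal nonempty tight set $S^{*}$, using that the family $\{S\subseteq A:|N_{G-uv}(S)|=|S|\}$ is closed under intersection (submodularity of the neighbourhood function combined with Hall's condition for $G-uv$); since every nonempty tight set contains $u$, so does $S^{*}$, and $N_{G-uv}(S^{*})\subseteq N_{G-uv}(S)$ still excludes $v$. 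Minimality guarantees that $G[S^{*}\cup N_{G-uv}(S^{*})]$ has no nonempty proper tight subset of its $A$-side and hence is matching covered. Setting $A_1=S^{*}$ and $B_1=N_{G-uv}(S^{*})$ yields $u\in A_1$, $v\in B\setminus B_1$, $E[A_1,B\setminus B_1]=\{uv\}$, and $G[A_1\cup B_1]$ matching covered, with $A_1,B_1$ nonempty and proper.

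The main obstacle lies in the ``only if'' direction, specifically in the two structural facts it rests on: that the tight sets of $G-uv$ form an intersection-closed family, so that a unique minimal nonempty tight set is available, and that a minimal tight set induces a matching covered subgraph because it admits no smaller tight subset and is therefore elementary. Care is also needed to argue that deleting the single edge $uv$ forces every tight set of $G-uv$ to contain $u$ and to lose exactly $v$ from its neighbourhood, since this is precisely what ties the witnessing pair $A_1,B_1$ to the edge $uv$.
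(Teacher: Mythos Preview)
The paper states this lemma with a citation to \cite{CLM15} and supplies no proof of its own, so there is no in-paper argument to compare against. Your proof is correct and follows the standard route: the ``if'' direction is a clean counting obstruction, and the ``only if'' direction locates a Hall-tight set in $G-uv$, shows it must contain $u$ and exclude $v$ from its neighbourhood because $G$ itself is matching covered, and then passes to a minimal such set to force the induced subgraph to be matching covered.

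One point deserves an extra sentence: when you conclude that $G[S^{*}\cup N_{G-uv}(S^{*})]$ is matching covered from the absence of smaller tight sets, you are implicitly also using that this induced subgraph is connected (the definition of matching covered requires it). This follows from the same minimality: a nontrivial component would have its $A$-side equal to a nonempty proper tight subset of $S^{*}$. With that remark added, the argument is complete.
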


Let $G[A,B]$ be a matching covered graph with {at least 4 vertices} and let $X$ be a vertex set of $G$ such that $|X\cap A|=|X\cap B|$.
We say that $X$ is a {\em $P$-set} of $G$ if either $|E[X\cap A,\overline{X}\cap B]|=1$ or $|E[\overline{X}\cap A,X\cap B]|=1$. Obviously,  $\overline{X}$ is a $P$-set if $X$ is a  $P$-set.
A $P$-set $X$ of $G$ is {\em minimum} if for each $P$-set $Y$ in $G$ different from $X$, $|X|\le |Y|$.
By Lemma \ref{lem:nonre-bi}, for every nonremoveble edge of a bipartite matching covered graph, there exist {at least} two $P$-sets associated with it.

\begin{lem}\label{lem:minimum-P-set}
    Let $G[A,B]$ be a matching covered  graph with at least 4 vertices and $\delta(G)\ge3$. If $X$ is a minimum $P$-set of $G$, then every edge of $E(G[X])$ is removable in $G$.
\begin{proof}
Without loss of generality, assume that $|E[\overline{X}\cap A,X\cap B]|=1$. Let $\{ab\}=E[\overline{X}\cap A,X\cap B]$, where $a\in \overline{X}\cap A$ and $b\in X\cap B$.
If $|X|=2$, then $E(G[X])$ consists of some multiple edges, as $d(b)\ge3$. As multiple edges are removable, the result holds.
Now assume that $|X|\ge4$. %As $X$ is minimal, we have $|\overline{X}|\ge4$.
Suppose that there exists a nonremovable edge $uv$ of $G$, such that $uv\in E(G[X])$.
Let $Y=X\cup \{a\}$. It can be checked by Proposition \ref{pro:tight-cut-in-bipartiteMC} that $\partial(Y)$ is a tight cut.
Noting that $\overline{X}$ is also a $P$-set, $|\overline{X}|\geq |{X}|>2$ by the minimality of $X$.
Let $G'=G/(\overline{Y}\to \overline{y})$ (note that $G'\neq G$).
Then $uv$ is also a nonremovable edge of $G'$ by Lemma \ref{lem:re_also_re}. So
there exists a $P$-set $Z$ associated with $uv$ in $G'$. Without loss of generality, assume that $E[Z\cap A,\overline{Z}\cap B]=\{uv\}$ and $u\in Z\cap A$.
Note that $V(G')=\{a,\overline{y}\}\cup X$ and $\{u,v\}\subset X$. If $\{a,\overline{y}\}\subset Z$ (the case is the same if  $\{a,\overline{y}\}\subset \overline{Z}$), then $\overline{Z}\subset X$ and  $\overline{Z}\neq X$, that is, $|\overline{Z}|<|X|$.  Note that $\overline{Z}$ is also a $P$-set of $G$. It contradicts the  assumption that $X$ is a minimum $P$-set of $G$.
So $|\{a,\overline{y}\}\cap Z|=1$.

  As $E[Z\cap A,\overline{Z}\cap B]=\{uv\}$, $a\neq u$, $\overline{y}\neq v$
  and $a\overline{y}\in E(G')$, we have $a\in \overline{Z}\cap A$ and $\overline{y}\in Z\cap B$.
  %Note that $N_{G'}(a)=\{\overline{y},b\}$, $N(\overline{Z}\cap B\setminus\{v\})\subseteq \overline{Z}\cap A$ and $|\overline{Z}\cap B\setminus\{v\}|=|\overline{Z}\cap A|-1$.
Suppose that  $b\notin \overline{Z}\cap B$ or $b=v$. As $N_{G'}(a)=\{\overline{y},b\}$ and $\overline{y}\notin \overline{Z}$, we have  $N(\overline{Z}\cap B\setminus\{v\})\subseteq \overline{Z}\cap A\setminus\{a\}$.  Since $|\overline{Z}\cap B\setminus\{v\}|=|\overline{Z}\cap A\setminus\{a\}|$, we have $|\overline{Z}\cap B\setminus\{v\}|=|\overline{Z}\cap A\setminus\{a\}|=0$ by Proposition \ref{pro:mc-emptyset}. It means that $\overline{Z}=\{v,a\}$.
If $b\notin \overline{Z}\cap B$, then $N_{G'}(v)=\{u\}$, contradicting the fact that $G'$ is 2-connected.
If $b=v$, then $N_{G'}(v)=\{u,a\}$. As $uv$ and $ab$ are nonremovable in $G'$, we have $d_{G'}(v)=2$, contradicting the assumption that $\delta(G')\ge3$ (as $\delta(G)\ge3$).
Thus, $b\in \overline{Z}\cap B$ and $b\neq v$. Let $W=\overline{Z}\setminus\{a,b\}$. Then $|W\cap A|=|W\cap B|$ and $E[W\cap B,\overline{W}\cap A]=\{uv\}$ (as $N_{G'}(a)=\{\overline{y},b\}$). It means that $W$ is a $P$-set of $G'$ associated with $uv$. Moreover, as $\overline{y}\notin W$ and $\overline{W}=Z\cup\{a,b\}$,  $W$ is also a $P$-set of $G$ and $|W|\le |V(G')|-4$. Note that $|X|=|V(G')|-2$. Then $|W|<|X|$, contradicting the assumption that $X$ is minimum. Therefore, the result follows.
\end{proof}
\end{lem}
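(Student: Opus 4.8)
The plan is to argue by contradiction. Suppose $X$ is a minimum $P$-set but some edge $uv\in E(G[X])$, with $u\in A$ and $v\in B$, is not removable; the goal is to manufacture a $P$-set of $G$ that is strictly smaller than $X$. Normalize notation so that the unique crossing edge witnessing that $X$ is a $P$-set is $\{ab\}=E[\overline X\cap A, X\cap B]$ with $a\in\overline X\cap A$ and $b\in X\cap B$. The case $|X|=2$ is immediate: then $X$ consists of a single $A$-vertex and a single $B$-vertex, so $E(G[X])$ is a set of parallel edges, and since $d_G(b)\ge 3$ while only $ab$ leaves $X$ there are at least two such edges; parallel edges are always removable. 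So from now on I assume $|X|\ge 4$.

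The key move is to push the anomalous vertex $a$ into the set. I would set $Y:=X\cup\{a\}$. Then $|Y\cap A|=|Y\cap B|+1$, and every edge of $\partial(Y)$ is incident with a vertex of the larger side $Y\cap A$ — because the only edge of $\partial(X)$ meeting a $B$-vertex of $X$ was $ab$, which is now internal to $Y$ — so $\partial(Y)$ is a tight cut by Proposition \ref{pro:tight-cut-in-bipartiteMC}. Contracting the complementary side produces $G':=G/(\overline Y\to\overline y)$, again a matching covered bipartite graph, with $V(G')=X\cup\{a,\overline y\}$, and one checks that $\delta(G')\ge 3$. Since $\overline X$ is also a $P$-set, $X\ne\overline X$ (as $V(G)$ is not a $P$-set), and $X$ is minimum, we get $|\overline Y|\ge|\overline X|-1\ge 3$, so $G'\ne G$. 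In $G/Y$ the edge $uv$ becomes a loop and is therefore vacuously removable, so by the tight-cut part of Lemma \ref{lem:re_also_re}, $uv$ is still not removable in $G'$.

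Next I would apply Lemma \ref{lem:nonre-bi} to the non-removable edge $uv$ of $G'$ to obtain a $P$-set $Z$ of $G'$ with, after possibly swapping $Z\leftrightarrow\overline Z$, $E[Z\cap A,\overline Z\cap B]=\{uv\}$ and $u\in Z\cap A$. The heart of the argument is to locate the three ``new'' vertices $a,\overline y,b$ relative to $Z$. If $\{a,\overline y\}$ lay entirely in $Z$ (or entirely in $\overline Z$), then $\overline Z$ (resp.\ $Z$) would be a proper subset of $X$, hence a $P$-set of $G$ smaller than $X$, contradiction; so exactly one of $a,\overline y$ lies in $Z$, and since $a\overline y\in E(G')$ with $a\ne u$ and $\overline y\ne v$, the direction of the crossing edge forces $a\in\overline Z\cap A$ and $\overline y\in Z\cap B$. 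Using $N_{G'}(a)=\{\overline y,b\}$, if $b\notin\overline Z$ or $b=v$ then $N_{G'}(\overline Z\cap B\setminus\{v\})\subseteq\overline Z\cap A\setminus\{a\}$, and the ``reverse'' form of Proposition \ref{pro:mc-emptyset} collapses $\overline Z$ to $\{a,v\}$, whereupon $v$ either becomes separated by $u$ (contradicting $2$-connectivity) or has $d_{G'}(v)=2$ (contradicting $\delta(G')\ge 3$). Hence $b\in\overline Z$ and $b\ne v$, and then $W:=\overline Z\setminus\{a,b\}$ is a $P$-set of $G'$ associated with $uv$ that avoids $\overline y$, hence a $P$-set of $G$; a count gives $|W|\le|V(G')|-4=|X|-2<|X|$, the desired contradiction.

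I expect the main obstacle to be the case analysis in the last paragraph: tracking which of $a,\overline y,b$ lies on which side of $Z$, correctly invoking the ``reverse'' direction of Proposition \ref{pro:mc-emptyset} on $\overline Z\setminus\{v\}$, ruling out the two degenerate placements of $b$ through the degree and connectivity hypotheses, and finally verifying that the set $W$ extracted inside the contraction $G'$ genuinely descends to a $P$-set of $G$ of the claimed smaller size.
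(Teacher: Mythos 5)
Your proposal is correct and follows essentially the same route as the paper's own proof: the same $|X|=2$ multiple-edge case, the same tight cut $\partial(X\cup\{a\})$ and contraction $G'$, the same transfer of nonremovability of $uv$ to $G'$, the same placement analysis of $a,\overline{y},b$ relative to the $P$-set $Z$ via Proposition \ref{pro:mc-emptyset}, and the same construction of the smaller $P$-set $W=\overline{Z}\setminus\{a,b\}$ contradicting minimality. The only differences are cosmetic (e.g.\ your justification that $G'\neq G$ and the remark that $\delta(G')\ge 3$, which, as in the paper, is only needed for vertices inherited from $G$).
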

\begin{lem}\label{lem:bipart-M-C-at-most-1-nonre}
    Let $G[A,B]$ be a matching covered  graph with at least 4 vertices.
    Assume that every vertex of $A$ has degree at least 3.
    Then $G$ has two nonadjacent removable edges or
    there exist two vertices $u,v$ in $V(G)$ such that $v\in B$, $d(v)=2$, $d(u)\ge4$ and every edge of $\partial(u)$ is removable.
\end{lem}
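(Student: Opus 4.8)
I would argue by induction on $|V(G)|$, which is even since a bipartite matching covered graph satisfies $|A|=|B|$. For the base case $|V(G)|=4$ one does a finite check: the hypothesis forces $|E(G)|\ge 6$, and one verifies that if $\delta(G)\ge 3$ there are two multiple edges sitting on two vertex-disjoint pairs (two non-adjacent removable edges), while if $\delta(G)=2$ then some $b^*\in B$ has $d(b^*)\ge 4$ with all incident edges multiple, so $b^*$ together with the degree-$2$ vertex is the required pair. For the inductive step, if $G$ has no non-removable edge, then any perfect matching has at least two edges and these are pairwise non-adjacent removable edges, so we are done; hence assume $G$ has a non-removable edge, and split on $\delta(G)$.

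\emph{Case $\delta(G)=2$.} Every degree-$2$ vertex lies in $B$; pick $v\in B$ with $N(v)=\{u_1,u_2\}\subseteq A$ and form $G'=G/(\{u_1,v,u_2\}\to u^*)$. Since $v$ is matched to $u_1$ or $u_2$ in every perfect matching, $\partial(\{u_1,v,u_2\})$ is a tight cut; its other contraction is a fixed $4$-vertex bipartite matching covered graph in which every edge other than $u_1v$ and $u_2v$ is a multiple edge, hence removable. So by Lemma~\ref{lem:re_also_re}, an edge of $G$ not incident with $v$ is removable in $G$ if and only if it is removable in $G'$, whereas $u_1v,u_2v$ are non-removable in $G$. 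Now $G'$ is a smaller bipartite matching covered graph still satisfying the hypothesis, with $d_{G'}(u^*)=d_G(u_1)+d_G(u_2)-2\ge 4$, so apply the inductive hypothesis to $G'$. If $G'$ has two non-adjacent removable edges, a short check on which of their ends lie in $\{u_1,u_2\}$ shows the corresponding edges of $G$ are again non-adjacent and removable. If $G'$ has a pair $(u',v')$ as in the conclusion with $u'\ne u^*$, then $v'\in B$, $d_G(v')=d_{G'}(v')=2$, $d_G(u')=d_{G'}(u')\ge 4$, and every edge of $\partial_G(u')$ is removable, so $(u',v')$ works in $G$. Finally, if $u'=u^*$, then every edge at $u_1$ or $u_2$ other than $u_1v,u_2v$ is removable in $G$; either two such edges, one at $u_1$ and one at $u_2$, are non-adjacent, or $N(u_1)=N(u_2)=\{v,b^*\}$, in which case $N_{G'}(u^*)=\{b^*\}$, forcing $b^*$ to be matched to $u^*$ in every perfect matching of $G'$, hence $\{u^*,b^*\}$ is a component and $|V(G')|=2$, contradicting $|V(G)|\ge 6$.

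\emph{Case $\delta(G)\ge 3$.} By Lemma~\ref{lem:nonre-bi} there is a $P$-set; take a minimum one $X$ and, as in the proof of Lemma~\ref{lem:minimum-P-set}, assume $E[\overline X\cap A,X\cap B]=\{a_0b_0\}$. If $X$ were independent, then all edges incident with $X\cap A$ would cross to $\overline X\cap B$ (or, symmetrically, all edges at $X\cap B$ would cross), making $|E[\overline X\cap A,X\cap B]|$ at least $3|X\cap B|\ge 3$, a contradiction; so $G[X]$ has an edge, and by Lemma~\ref{lem:minimum-P-set} every edge of $G[X]$ is removable in $G$. Put $Y=X\cup\{a_0\}$. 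Then $\partial(Y)$ is a tight cut, and Proposition~\ref{pro:mc-emptyset} forces $|\partial(Y)|\ge 3$: otherwise $E[X\cap A,\overline X\cap B]=\emptyset$, so $N(X\cap A)\subseteq X\cap B$ with $|X\cap A|=|X\cap B|$, making $X$ empty or all of $V(G)$. Hence $G/(Y\to y)$ is a smaller bipartite matching covered graph satisfying the hypothesis. Apply the inductive hypothesis to $G/Y$. It cannot have a pair $(u',v')$ as in the conclusion, because the degree-$2$ vertex $v'$ would lie in $\overline Y\cap B$ with $d_G(v')=d_{G/Y}(v')=2$, contradicting $\delta(G)\ge 3$; so $G/Y$ has two non-adjacent removable edges, and at least one of them, say $f$, is not incident with $y$, i.e. has both ends in $\overline Y$. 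Since $\partial(Y)$ is tight, Lemma~\ref{lem:re_also_re} shows $f$ is removable in $G$; and $f$ (inside $\overline Y$) together with any edge of $G[X]$ (inside $Y$) are vertex-disjoint removable edges of $G$, as required.

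\emph{Main obstacle.} The hard part is organizing the residual subcases so that the induction really closes: in the $\delta(G)=2$ analysis one must show the case $N(u_1)=N(u_2)=\{v,b^*\}$ genuinely collapses $G'$ to two vertices and so cannot occur when $|V(G)|\ge 6$; and in the $\delta(G)\ge 3$ analysis the two decisive points are the inequality $|\partial(Y)|\ge 3$ (via Proposition~\ref{pro:mc-emptyset}) and the observation that a ``special pair'' in $G/Y$ would manufacture a degree-$2$ vertex of $G$ — it is precisely this observation that upgrades the inductive hypothesis to the two non-adjacent removable edges. The base case is routine but must be handled carefully to pin down the pair $(u,v)$ correctly when $\delta(G)=2$.
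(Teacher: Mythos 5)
Your proposal is correct, and it follows the paper's overall architecture: induction on $|V(G)|$, contraction of the tight cut around a degree-$2$ vertex of $B$ when $\delta(G)=2$, and the minimum $P$-set machinery (Lemma \ref{lem:nonre-bi}, Lemma \ref{lem:minimum-P-set}, Propositions \ref{pro:mc-emptyset} and \ref{pro:tight-cut-in-bipartiteMC}) when $\delta(G)\ge3$. The genuine difference is in the $\delta(G)\ge3$ case. The paper splits it into $|X|\ge4$ (where it asserts that $E(G[X])$ already contains two nonadjacent removable edges) and $|X|=2$ (where it contracts $Z=X\cup\{a\}$, works with both contractions, and must resolve the residual subcase in which the inductive ``good vertex'' of $G_2$ is the contracted vertex $z$). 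You contract $Y=X\cup\{a_0\}$ uniformly, need only one removable edge inside $G[X]$ (sidestepping the two-nonadjacent-edges claim), and—crucially—you exploit the degree-$2$ half of the inductive conclusion, which the paper does not use at this point: a degree-$2$ vertex on the $B$-side of $G/Y$ would be a genuine degree-$2$ vertex of $G$, contradicting $\delta(G)\ge3$, so $G/Y$ must yield two nonadjacent removable edges, one of which lies inside $\overline{Y}$ and transfers to $G$ by Lemma \ref{lem:re_also_re}, pairing with an edge of $G[X]$. This unifies the paper's two subcases and eliminates its $s=z$ analysis. In the $\delta(G)=2$ case your argument matches the paper's; where the paper invokes $2$-connectivity to extract two nonadjacent edges of $\partial(Y)$ when the good vertex is $y$, you instead derive $N(u_1)=N(u_2)=\{v,b^*\}$ and collapse $G'$ to two vertices, which is an equivalent contradiction. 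The points you leave implicit (tightness of $\partial(Y)$ via Proposition \ref{pro:tight-cut-in-bipartiteMC}, $d_{G/Y}(y)=|\partial(Y)|\ge3$, and $|V(G/Y)|\ge4$, the last following from $|X|\le|\overline{X}|$ and parity) are routine and fill in as you indicate.
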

\begin{proof}
     We will prove the result by induction on $|V(G)|$.
    If $|V(G)|=4$, then it can be checked that  the result follows, as every vertex of $A$ has degree at least 3 and at most two neighbors.
    Suppose that the result holds for $|V(G)|\le n$. Now we consider the case when $|V(G)|=n+2$, where $n$ is an even integer {at least} 4.

    Assume firstly that  there exists a vertex $v\in B$ such that $d_G(v)=2$. Let $N_G(v)=\{v_1,v_2\}$ and let $Y=\{v,v_1,v_2\}$. It can be checked by Proposition \ref{pro:tight-cut-in-bipartiteMC} that $\partial(Y)$ is a nontrivial tight cut of $G$, as $|V(G)|>4$. Let $G'=G/(\overline{Y}\to \overline{y})$ and let $G''=G/(Y\to y)$. Then $G'$ and $G''$ are bipartite and matching covered.
    As $d_G(v)=2$, $N_{G'}(v_i)\setminus\{v\}=\{\overline{y}\}$ (note that $|V(G')|=4$ and $G'$ is 2-connected) and every vertex of $N(v)$ has degree at least 3, we have $|E[\{v_i\},\{\overline{y}\}]|\ge2$, for each $i\in\{1,2\}$.
    It means that every edge of $\partial(\overline{y})$ is removable in $G'$. %Then by Lemma \ref{lem:re_also_re}, every edge of $G''$ is also removable in $G$.
    By induction, $G''$ has two nonadjacent removable edges or there exists a vertex $w$, such that $d_{G''}(w)\ge4$ and every edge of $\partial_{G''}(w)$ is removable in $G''$.
    If $G''$ has two nonadjacent removable edges or the vertex $w\neq y$, then the result holds by Lemma \ref{lem:re_also_re}, as every edge of $\partial(\overline{y})$ is removable in $G'$.
    Assume that $w=y$. Then every edge of $\partial(Y)$ is removable in $G$ by Lemma \ref{lem:re_also_re} again. We can find two nonadjacent edges in $\partial(Y)$ as $G$ is 2-connected. So the result follows.

    Now assume that $\delta(G)\ge3$.
    Let $X$ be a minimum $P$-set of $G$.  Without loss of generality, assume that   $|E[\overline{X}\cap A,X\cap B]|=1$.
    Let $\{ab\}=E[\overline{X}\cap A,X\cap B]$, where $a\in \overline{X}\cap A$ and $b\in X\cap B$.
    %Firstly assume that $\delta(G)\ge3$.
    If $|X|\ge4$, then by Lemma \ref{lem:minimum-P-set},
    $E(G[X])$ contains two nonadjacent removable edges of $G$, so the result holds.
    Assume that  $|X|=2$. Let $Z=X\cup\{a\}$.
    It can be checked by  Proposition \ref{pro:tight-cut-in-bipartiteMC} that $\partial(Z)$ is a nontrivial tight cut as $|V(G)|>4$ and  $|E[Z\cap B,\overline{Z}\cap A]|=\emptyset$ (note that $E[\overline{X}\cap A,X\cap B]=\{ab\}$).
    Let $G_1=G/(\overline{Z}\to \overline{z})$ and let $G_2=G/(Z\to z)$.
    As $d_{G}(a)\ge3$ and $E_G[\overline{X}\cap A,X\cap B]=\{ab\}$, we have $|E_G[\{a\},\overline{X}]|\ge2$.
    Then  $|E_{G_1}[\{a\},\{\overline{z}\}]|\ge2$, as $\overline{X}\setminus\{a\}\subseteq \overline{Z}$.
     It means that every  edge of $E_{G_1}[\{a\},\{\overline{z}\}]$ is removable    in $G_1$.
    Let $\{a'\}=X\setminus\{b\}$.
     As $\delta(G)\ge3$, $N_G(b)=\{a',a\}$ and  $|E_{G}[\{a\},\{b\}]|=1$, we have $|E_{G}[\{a'\},\{b\}]|>1$.
     Then $a'b$ is also removable in $G$ by Lemma \ref{lem:re_also_re}. %We will complete the proof by showing that there exists a removable of $G$ which lies in $\partial(Z)\setminus\{a'\}$ or lies in $E(G_2)\setminus\partial(z)$.

    By induction, $G_2$ has two nonadjacent removable edges, or there exists a vertex $s$ such that $d_{G_2}(s)\ge4$ and every edge of $\partial(s)$ is removable in $G_2$. Recall that $a'b$ is removable in $G$.
    If $G_2$ has two nonadjacent removable edges or the vertex $s\neq z$, that is, there exists a removable edge $e$ of $G_2$ which lies in $E(G_2)\setminus\partial(z)$, then $e$ is also removable in $G$ by Lemma \ref{lem:re_also_re}. Moreover, $e$ and $a'b$ is nonadjacent. So the result holds in this case.
    Assume that $s=z$. Recall that every edge of $E_{G_1}[\{a\},\{\overline{z}\}]$ is removable in $G_1$ and every edge of $\partial_{G_2}(z)$ is removable in $G_2$. Then there exists a removable edge  of $G$ which is incident with $a$ by Lemma \ref{lem:re_also_re} again.  Together with $a'b$, $G$ has two nonadjacent removable edges. The result holds.
\end{proof}

\begin{lem}\label{lem:V(H)_re_edge}
    Let $\partial(X)$ and $\partial(Y)$ be two robust cuts of a brick $G$ such that  $G/{X}$ and $G/{Y}$ are bricks, and $(G/(\overline{X}\rightarrow\overline{x}))/(\overline{Y}\rightarrow\overline{y})$ is a matching covered bipartite  graph $H$.
    If $G/(X\rightarrow x)$ is wheel-like  such that $x$ is its hub and every edge of $\partial_{G/{X}}(x)$ belongs to {some} removable class of $G/{X}$, and $|N_H(\overline{x})|\ge 2$, then there exists a removable edge $e$ of $G$ such that both  ends of $e$ belong to $\overline{X}\cup N(\overline{X})\setminus\overline{Y}$.
\end{lem}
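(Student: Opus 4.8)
The plan is to produce the required edge by picking an edge $e$ of $G$ incident with $\overline{X}$ that becomes a removable edge at $\overline{x}$ in $H$, lifting it back to $G$ through the two contractions, and --- in the one bad case --- replacing it by the other edge of a removable doubleton. Note first that for $(G/(\overline{X}\rightarrow\overline{x}))/(\overline{Y}\rightarrow\overline{y})$ to be a graph in which $\overline{x}$ and $\overline{y}$ are distinct vertices the sets $\overline{X}$ and $\overline{Y}$ must be disjoint, so $\overline{X}\subseteq Y$, $\overline{Y}\subseteq X$ and $V(H)=(X\cap Y)\cup\{\overline{x},\overline{y}\}$; put $G_1=G/(X\rightarrow x)$ and $G_2=G/(\overline{X}\rightarrow\overline{x})$, so $G_1$ is a wheel-like brick with hub $x$, $G/Y$ is a brick, and $G_2$ is a near-brick (hence matching covered) since $\partial(X)$ is a robust cut. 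Because $|N_H(\overline{x})|\ge2$, the vertex $\overline{x}$ has a neighbour $v\neq\overline{y}$ in $H$, so $v\in X\cap Y$, and the edge $\overline{x}v$ of $H$ arises from an edge $e=uv\in\partial_G(X)$ with $u\in\overline{X}$. Since $u\in\overline{X}$, $v\in N(\overline{X})\cap Y$ and $\overline{X}\cap\overline{Y}=\emptyset$, both ends of $e$ --- and both ends of any edge of $G[\overline{X}]$ --- already lie in $\overline{X}\cup N(\overline{X})\setminus\overline{Y}$, so it suffices to find a removable edge of $G$ that is either $e$ or an edge of $G[\overline{X}]$.

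The next step is to show that $e$ is removable in $G_2$. One checks that $G_2/(\overline{Y}\rightarrow\overline{y})=H$ and that $G_2/((X\cap Y)\cup\{\overline{x}\})=G/(Y\rightarrow y)$ (the preimage in $G$ of $(X\cap Y)\cup\{\overline{x}\}$ being $(X\cap Y)\cup\overline{X}=Y$), and both of these graphs are matching covered --- the first by hypothesis, the second because it is a brick --- so $\partial_{G_2}(\overline{Y})$ is a separating cut of the matching covered graph $G_2$. Under $G_2\rightarrow H$ the edge $e$ is the edge $\overline{x}v$, which is removable in $H$ by Lemma~\ref{bipar-H-nonre}; under $G_2\rightarrow G/Y$ both ends of $e$ lie in $Y$, so $e$ becomes a loop and is therefore (vacuously, by the stated convention on edges outside a graph) removable in $G/Y$. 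Hence, by Lemma~\ref{lem:re_also_re}, $e$ is removable in $G_2$.

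To finish, one considers the image of $e$ in $G_1$: it is the edge $xu$, incident with the hub $x$, so by hypothesis $xu$ lies in a removable class $R$ of the brick $G_1$. If $R=\{xu\}$, then $e$ is removable in both $\partial(X)$-contractions $G_1$ and $G_2$, and since $\partial(X)$ is a separating cut of $G$ it follows from Lemma~\ref{lem:re_also_re} that $e$ is removable in $G$, so $e$ is the desired edge. If instead $R=\{xu,f\}$ is a removable doubleton, then (using the well-known fact that the two edges of a removable doubleton of a brick have no common end) $f$ is not incident with $x$, so $f\in E(G[\overline{X}])$ and $f\notin\partial(X)$; I would then apply Lemma~\ref{lem:removable_doubleton} to $G$ with the separating, non-tight cut $C=\partial(\overline{X})=\partial(X)$ --- for which $G/X=G_1$ is a brick --- and the removable doubleton $R$ of $G_1$, noting that $R\cap C=\{xu\}$ and that the edge $xu$ of $R\cap C$, which is the edge $e$ of $G$, was shown above to be removable in $G/\overline{X}=G_2$. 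Lemma~\ref{lem:removable_doubleton} then yields a removable edge of $G$ lying in $R\setminus C=\{f\}$; that is, $f$ is removable in $G$, and both ends of $f$ lie in $\overline{X}\subseteq\overline{X}\cup N(\overline{X})\setminus\overline{Y}$, as required.

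The delicate point --- and the reason the hypotheses are arranged as they are --- is exactly this last case distinction. When the hub-edge $xu$ of $G_1$ happens to be a removable edge, lifting $e$ through $\partial(X)$ is immediate; but $xu$ may instead lie in a removable doubleton (for instance when $G_1\cong K_4$), and then $e$ itself cannot be lifted, so everything rests on Lemma~\ref{lem:removable_doubleton}, whose hypothesis that the cut-edge of the doubleton be removable in $G/\overline{X}$ is precisely what the $G_2$-step supplies. A second, more routine, source of care is the bookkeeping of how edges of $G_1$, $G_2$ and $H$ correspond to edges of $G$, which is what guarantees that the edge finally produced lands in $\overline{X}\cup N(\overline{X})\setminus\overline{Y}$.
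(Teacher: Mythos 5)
Your proof is correct and follows essentially the same route as the paper: both pick a neighbour $b\neq\overline{y}$ of $\overline{x}$ in $H$, invoke Lemma~\ref{bipar-H-nonre} for its removability in $H$, use the convention that edges absorbed by a contraction are vacuously removable, and split into the cases where the corresponding hub-edge of $G/X$ is removable (Lemma~\ref{lem:re_also_re}) or lies in a removable doubleton (Lemma~\ref{lem:removable_doubleton}). The only difference is bookkeeping: you first establish removability in $G/\overline{X}$ via the cut $\partial(\overline{Y})$ and then lift through $\partial(X)$, whereas the paper works inside $G/\overline{Y}=(G/X)(x)\odot H(\overline{x})$ and lifts through $\partial(Y)$ — the same lemmas applied in the opposite order.
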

\begin{proof}
    Let $G'=G/(X\rightarrow x)$.
        Note that $|N_H(\overline{x})|\ge 2$.
    Assume that $\overline{x}b$ is an edge  in $H$ such that $b\neq \overline{y}$.
    Then $\overline{x}b$ is removable in $H$ by Lemma \ref{bipar-H-nonre}. Let $G/\overline{Y}=(G'(x)\odot H(\overline{x}))_{\theta}$.
    If $\theta(\overline{x}b)$ is removable in $G'$, then $\overline{x}b$ is also removable in $G/\overline{Y}$ by Lemma \ref{lem:re_also_re}.
    Since $\overline{x}b\notin \partial(\overline{y})$,
    $\overline{x}b$ is removable in $G$ by Lemma \ref{lem:re_also_re} again.
    Moreover, the ends of $\overline{x}b$ belong to $\overline{X}\cup N(\overline{X})\setminus\overline{Y}$. The result follows by setting $e=\overline{x}b$ in this case.
    If $\theta(\overline{x}b)$ is an edge of a removable doubleton in $G'$, assume that $\{e',\theta(\overline{x}b)\}$  is  a removable doubleton  in $G'$. Then $e'$ is a removable edge in $G/\overline{Y}$ by Lemma \ref{lem:removable_doubleton}.
    So $e'$ is removable in $G$ by Lemma \ref{lem:re_also_re} once more. As the ends of $e'$ belong to $\overline{X}\cup N(\overline{X})\setminus\overline{Y}$, the result follows by setting $e=e'$.
\end{proof}

\section{Wheel-like bricks}
In this section, we present some properties of  wheel-like bricks and prove the Theorem \ref{thm:wheel-like}. {First, we have the following propositions.}

\begin{pro}[\cite{Lucchesi2024}]\label{pro:solid-wl}
Let $G$ be a solid brick and let $h$ be a vertex of $G$. Either $G$ is a wheel having $h$ as a hub, possibly with multiple edges incident with $h$, or $G$ has two removable edges not incident with $h$.
\end{pro}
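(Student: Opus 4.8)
The plan is to argue by induction on $|E(G)|$, establishing the equivalent statement that if a solid brick $G$ has a vertex $h$ such that at most one removable edge of $G$ is not incident with $h$, then $G$ is a wheel with hub $h$. Since $G$ is a brick it is bicritical, so $G-h-v$ has a perfect matching for every $v\neq h$; thus $G-h$ is factor-critical, and since $\delta(G)\ge 3$ the vertex $h$ is joined to every other vertex. Hence $G$ is a wheel with hub $h$ if and only if $G-h$ is a cycle (necessarily an odd cycle, as $|V(G)|$ is even), so the task reduces to showing that $G-h$ is a cycle. The base case $G=K_4=W_3$ is immediate, so assume $G\neq K_4$ and that $G$ has at most one removable edge not incident with $h$; suppose, for a contradiction, that $G$ is not a wheel with hub $h$.

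A short counting step comes first. A removable doubleton of a brick consists of two edges lying in exactly the same perfect matchings, and this together with bicriticality forces a nontrivial separating cut that is not tight; hence the solid brick $G\neq K_4$ has no removable doubleton, and Theorem \ref{thm:re_in_brick} gives at least $\Delta(G)\ge 3$ removable edges in $G$. Since at most one of them avoids $h$, at least two are incident with $h$, so we may fix a removable edge $e=hu$; in fact one should choose $e$ among the removable edges at $h$ so that $G-e$ is again a brick whenever possible (a ``thin'' removable edge).

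Now the core. The graph $G-e$ is matching covered and solid, so by Proposition \ref{pro:solid-MC} the bricks in a tight cut decomposition of $G-e$ are solid. Following at each step the contraction in which $h$ is not absorbed, we locate $h$ in a single piece $B$ of the decomposition, with $|E(B)|<|E(G)|$. If $B$ is a brick larger than $K_4$, the induction hypothesis applied to $(B,h)$ yields that $B$ is an odd wheel with hub $h$ or has two removable edges off $h$; if $B$ is a brace, the bipartite lemmas (Lemmas \ref{lem:minimum-P-set} and \ref{lem:bipart-M-C-at-most-1-nonre}) produce two removable edges of $B$ off $h$; and if $B$ is a copy of $K_4$ (which has no removable edges), or to get further mileage in the brace case, one turns to the \emph{other} pieces of the decomposition --- all of which miss $h$, so that every removable edge found there is automatically off $h$. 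Whenever two removable edges off $h$ are obtained in pieces of the decomposition, they are transported up the decomposition to $G-e$ by Lemmas \ref{lem:re_also_re} and \ref{lem:removable_doubleton}, and then from $G-e$ to $G$, contradicting the hypothesis. The only configuration that survives is one in which $B$ is an odd wheel with hub $h$ and the decomposition is essentially trivial; one then checks that $G-e$ is itself a wheel with hub $h$, and since $G=(G-e)+hu$ with $u$ a vertex of its rim, $hu$ is merely a parallel spoke, so $G-h$ is a cycle and $G$ is a wheel with hub $h$ --- the desired contradiction.

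The main obstacle, as I see it, is twofold. First, transporting a removable edge from $G-e$ back to $G$: removability passes freely between a graph and the two contractions of a tight cut (Lemma \ref{lem:re_also_re}), but restoring the deleted edge $e$ need not preserve matching-coveredness (adding $e$ back to the matching covered graph $G-e-f$ can leave $e$ outside every perfect matching of $G-f$), hence need not preserve removability of $f$; dealing with this cleanly seems to require the careful choice of a thin removable edge $e$ at $h$ together with a direct check that the transported edges still lie in perfect matchings of $G$ avoiding them. Second, the reconstruction in the surviving case: ruling out the possibility that $G-e$ is a nontrivial composition of small pieces together with one wheel, and thereby confirming that $G$ is itself a wheel with hub $h$, requires a delicate analysis of the tight cut decomposition of $G-e$ near $h$. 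The counting step and the case split are otherwise routine.
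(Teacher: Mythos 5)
The paper offers no proof of this proposition: it is quoted from Lucchesi and Murty's book \cite{Lucchesi2024}, so your proposal has to stand on its own, and it does not. The two points you yourself flag as ``obstacles'' are exactly where the argument breaks, and they are not repairable by the devices you name. First, the transport step: an edge $f$ that is removable in $G-e$ need not be removable in $G$, since after restoring $e$ there may be no perfect matching of $G-f$ containing $e$; Lemmas \ref{lem:re_also_re} and \ref{lem:removable_doubleton} only move removability across tight/separating cuts of a \emph{fixed} graph, never across edge deletion, and no property of a ``thin'' removable edge at $h$ is established that would close this hole. Hence even if you produce two removable edges of $G-e$ avoiding $h$, you have not produced them in $G$, and the induction yields nothing. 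Second, the surviving case (``$B$ is an odd wheel with hub $h$ and the decomposition is essentially trivial, one then checks that $G-e$ is itself a wheel'') is precisely the reconstruction that would constitute the proof; it is asserted, not carried out.

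There are further defects. The claim that factor-criticality of $G-h$ together with $\delta(G)\ge 3$ makes $h$ adjacent to every other vertex is simply false --- the Petersen graph is a solid brick and a counterexample; the reduction ``$G$ is a wheel with hub $h$ iff $G-h$ is a cycle'' can instead be salvaged from $3$-connectivity, but the step as written is wrong. The statement that removable edges found in pieces of the decomposition not containing $h$ are ``automatically off $h$'' is also unjustified: edges of such a piece incident with a contraction vertex correspond to edges of $G$ that may be incident with $h$, since $h$ lies inside the contracted shore from that piece's viewpoint. Finally, two facts are used without justification: that $G-e$ is solid, and that a solid brick other than $K_4$ has no removable doubleton; both are true but neither follows from anything in this paper (the first needs, e.g., the characterization of solid matching covered graphs via two disjoint odd circuits whose removal leaves a perfectly matchable graph), so they would have to be cited or proved. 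As it stands the proposal is an outline whose essential steps are missing, not a proof.
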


\begin{pro}[\cite{LX2024}]\label{pro:mutiedges-add}
    Let $G$ be a wheel-like brick and let $h$ is its hub. Then all the multiple edges are incident with $h$.
\end{pro}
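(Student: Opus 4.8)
The statement to prove is Proposition~\ref{pro:mutiedges-add}: in a wheel-like brick $G$ with hub $h$, all multiple edges are incident with $h$.

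My plan is to argue by contradiction. Suppose there exist parallel edges $e_1, e_2$ joining two vertices $u$ and $v$ with $h \notin \{u,v\}$. The key observation is that parallel edges are always removable in a matching covered graph: if $e_1, e_2$ both join $u$ to $v$, then $G - e_1$ is still matching covered, because any perfect matching using $e_1$ can be rerouted through $e_2$, and no edge loses its covering matching. More precisely, $\{e_1,e_2\}$ typically behaves so that each of $e_1$ and $e_2$ is individually removable (it is not a removable doubleton unless the graph is very degenerate, but in any case $G - e_1$ is matching covered). So $e_1$ is a removable edge, hence $\{e_1\}$ is a removable class. Since $G$ is wheel-like with hub $h$, every removable class must contain an edge incident with $h$; but $\{e_1\}$ consists solely of the edge $e_1 = uv$, which is not incident with $h$ since $h \notin \{u,v\}$. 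This is the desired contradiction, so no such parallel pair exists, i.e. every pair of parallel edges is incident with $h$.

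The one point requiring care is the possibility that $\{e_1, e_2\}$ forms a removable doubleton rather than $e_1$ being removable on its own. I would handle this by checking the definition: a removable doubleton $\{e_1,e_2\}$ requires that $G - e_1 - e_2$ is matching covered but neither $G - e_1$ nor $G - e_2$ is. If $e_1, e_2$ are parallel edges $uv$, then deleting one of them, say $e_1$, leaves $G - e_1$ matching covered — every edge of $G - e_1$ still lies in a perfect matching (matchings avoiding $e_1$ are untouched; a matching $M$ containing $e_1$ is replaced by $M - e_1 + e_2$), so $G - e_1$ is matching covered. Hence $\{e_1,e_2\}$ cannot be a removable doubleton, and instead $e_1$ (and likewise $e_2$) is a genuine removable edge. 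Either way, we obtain a removable class — namely $\{e_1\}$ — that avoids $\partial(h)$, contradicting the wheel-like hypothesis.

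I expect no serious obstacle here; the proof is essentially a one-line application of the definition of ``wheel-like'' once one notes that a parallel edge is removable. The only subtlety is being precise that ``matching covered'' is preserved under deletion of one of two parallel edges, which follows immediately from the rerouting argument above. (This result is cited from \cite{LX2024}, so the intended proof is presumably exactly this short argument.)
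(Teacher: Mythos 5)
Your proof is correct. Note that the paper does not prove this proposition at all — it is quoted from \cite{LX2024} — so there is no in-paper argument to compare against; but your argument is exactly the expected one: a parallel edge is always removable (the rerouting $M \mapsto M - e_1 + e_2$ works since no perfect matching can contain both parallel edges, and $G-e_1$ stays connected because $e_2$ survives — a fact the paper itself invokes without proof in Lemma \ref{lem:minimum-P-set}), so $\{e_1\}$ is a removable class, and in particular $\{e_1,e_2\}$ cannot be a removable doubleton; hence a multiple edge avoiding $h$ would give a removable class disjoint from $\partial(h)$, contradicting the wheel-like hypothesis.
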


A nonbipartite matching covered graph $G$ is {\em near-bipartite} if it has a pair of edges $e$ and $f$ such that the subgraph $G-e-f$  obtained by the deletion of $e$ and $f$ is a matching covered bipartite graph.
%{A matching covered graph $G$ is {\em near-bipartite} if it has a removable doubleton $R$ such that the subgraph $G-R$ obtained by the deletion of $R$ is a bipartite matching covered graph.}
In fact, every brick with a removable doubleton is near-bipartite \cite{Lovasz87}.
\begin{thm}[Theorem 9.17 in \cite{Lucchesi2024}]\label{thm:sim-near-bi-brick}
      Every simple near-bipartite brick distinct from $K_4$, {the triangular prism} and $R_8$ (see Figure \ref{fig:r8}) has two nonadjacent removable edges.
\end{thm}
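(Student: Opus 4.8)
I would argue according to whether $G$ is solid.

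\emph{The solid case.} Among wheels, only $K_4=W_3$ is a near-bipartite brick: a wheel that is a brick has an odd rim $C_k$, and for $k\ge 5$ each of its spokes lies in two of the $k$ triangles through the hub, so deleting any two edges destroys at most four of those triangles and the result is never bipartite. Hence if $G$ is solid and $G\ne K_4$, then $G$ is not a wheel, and Proposition~\ref{pro:solid-wl} gives, for \emph{every} vertex $h$ of $G$, two removable edges of $G$ not incident with $h$. Now if $G$ had no two nonadjacent removable edges, its set $R$ of removable edges would be a set of pairwise-adjacent edges of the simple graph $G$, hence contained in the star $\partial(h^\ast)$ of some vertex $h^\ast$ or equal to the three edges of a triangle $xyz$; but then Proposition~\ref{pro:solid-wl} applied with $h=h^\ast$ (respectively $h=x$) asserts two removable edges avoiding that vertex, which is impossible since at most one edge of $\partial(h^\ast)$ (resp. of the triangle) avoids it. So every solid near-bipartite brick other than $K_4$ has two nonadjacent removable edges.

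\emph{The nonsolid case.} Here I would induct on $|V(G)|$, the base being the finitely many simple near-bipartite bricks of order at most $8$, checked by hand — this is where the triangular prism and $R_8$ turn up as exceptions. By Corollary~\ref{cor:robust-cut-solid-brick}, $G$ has a robust cut $\partial(X)$ with $X'\subseteq X$ and $X''\subseteq\overline X$ such that $G_1:=G/\overline{X'}$ is a solid brick, $G_2:=G/\overline{X''}$ is a brick, and contracting both $X'$ and $X''$ turns $G$ into a matching covered bipartite graph $H$ whose contraction vertices $x',x''$ lie in opposite colour classes. By the solid case $G_1$ is $K_4$ or has two nonadjacent removable edges, and by induction (through the solid case if $G_2$ is solid) $G_2$ is $K_4$, the prism, $R_8$, or has two nonadjacent removable edges; multiple edges created by a contraction are harmless, being automatically removable. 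When $G_1$ has two nonadjacent removable edges, at least one of them misses the contracted vertex, so it lies inside $X'$; via the lifting lemmas of the previous section (Lemmas~\ref{lem:re_also_re}, \ref{lem:removable_doubleton}, \ref{bipar-H-nonre}, and~\ref{lem:V(H)_re_edge}, the last for the subcase where a contraction is wheel-like with the contracted vertex as hub) this, together with the removable edges of $H$ incident with $x'$, should produce a removable edge of $G$ with both ends in $X'$. Running the same argument on the $\overline X$-side with $G_2$ produces a removable edge of $G$ with both ends in $X''$; since $X'\cap X''=\emptyset$, these two edges of $G$ are nonadjacent, as required.

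\emph{Exceptional contractions and the main obstacle.} When $G_1$ or $G_2$ is one of $K_4$, the prism, $R_8$, the corresponding side of $G$ is forced to be small and $\partial(X)$ has few edges, and a short case analysis should show that $G$ itself has order at most $8$ (so the base cases apply) or that a removable edge can still be extracted from that side while keeping the two lifted edges nonadjacent in $G$. I expect the crux to be exactly this bookkeeping around the robust cut: guaranteeing that the two harvested edges really do sit on opposite sides of $\partial(X)$ — hence are nonadjacent in $G$ and not merely in the contractions — and that they survive transport back through the contractions; and, relatedly, framing the induction so that the brick $G_2$, which need not itself be near-bipartite, is covered (one may have to prove the statement for a broader class of simple bricks whose extra exceptions happen not to be near-bipartite). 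The graphs $K_4$, the triangular prism and $R_8$ are precisely the configurations with ``not enough room'': $K_4$ has no removable edges at all, while for the prism and $R_8$ the pieces flanking any robust cut are too small to steer the lifted edges apart — so the induction must track the size of $\partial(X)$ and the locations of the relevant endpoints finely enough to force this scarcity to occur only when $|V(G)|\le 8$.
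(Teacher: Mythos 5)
First, note that the paper does not prove this statement at all: it is quoted verbatim as Theorem 9.17 of Lucchesi and Murty's book \cite{Lucchesi2024}, so there is no internal proof to compare yours against. Judged on its own terms, your solid case is sound: the counting of hub-triangles correctly shows that no odd wheel $W_k$ with $k\ge 5$ is near-bipartite, and the ``pairwise adjacent edges form a star or a triangle'' reduction combined with Proposition~\ref{pro:solid-wl} (applied at the centre of the star, resp.\ at a vertex of the triangle) does yield two nonadjacent removable edges for every simple solid brick that is not a wheel.

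The nonsolid case, however, has a genuine gap, and it is exactly where you suspect. Your induction is formulated for \emph{simple near-bipartite} bricks, but the contractions $G_1=G/\overline{X'}$ and $G_2=G/\overline{X''}$ produced by Corollary~\ref{cor:robust-cut-solid-brick} are in general neither simple (multiple edges appear at the contraction vertices) nor near-bipartite, so neither your solid-case conclusion ``$G_1$ is $K_4$ or has two nonadjacent removable edges'' nor the inductive hypothesis for $G_2$ is available. Concretely, $G_1$ can be an arbitrarily large odd wheel (possibly with multiple edges at the hub) whose hub is the contraction vertex $\overline{x'}$: such a $G_1$ is solid, is not $K_4$, has no two nonadjacent removable edges, and can occur for bricks $G$ of any order, so your claim that the exceptional contractions force $|V(G)|\le 8$ fails. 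More generally, the genuinely hard situation is when \emph{every} removable class of a contraction meets the star of its contraction vertex (the ``wheel-like'' situation that Section~3 of this paper and Lemmas~\ref{bipar-H-nonre} and~\ref{lem:V(H)_re_edge} are designed for); in that case one cannot harvest a removable edge interior to $X'$ by Lemma~\ref{lem:re_also_re} alone, and extracting one near the cut while keeping it nonadjacent to the edge found on the other side is precisely the analysis you defer with ``should produce'' and ``a short case analysis should show''. (In the good case where both $G_1$ and $G_2$ do have two nonadjacent removable edges, your lifting is fine and in fact simpler than you make it: one edge on each side avoids the contraction vertex, hence lies inside $X'$, resp.\ $X''$, and Lemma~\ref{lem:re_also_re} with the paper's convention lifts it directly.) As it stands, then, the proposal proves the solid case but only sketches the nonsolid case, whose exceptional analysis is both unproved and, in the stated form, false.
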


 \begin{figure}[!h]
    \centering
    \includegraphics[totalheight=1.8cm]{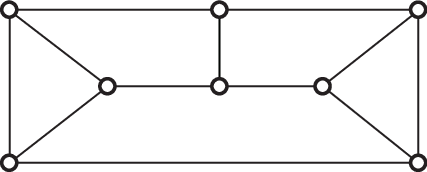}
    \caption{$R_8$.}
    \label{fig:r8}
\end{figure}

\begin{lem}[\cite{LX2024}]\label{lem:simple-nbb-wl-K4}
   {\rm 1)} Let $G$ be a simple near-bipartite brick. Then $G$ is wheel-like if and only if $G$ is isomorphic to $K_4$.

   {\rm 2)}  Let $G$ be a simple planar brick with six vertices. Then $G$ is a wheel-like brick if and only if $G$ is isomorphic to $W_5$.
\end{lem}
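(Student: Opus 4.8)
\emph{Proof sketch.}
The plan is to handle the two parts in parallel, each splitting into an easy ``if'' direction and a harder ``only if'' direction, with a single observation doing most of the work: a removable edge $e$ of a brick $G$ is by itself a removable class $\{e\}$, so if $G$ is wheel-like with hub $h$ then the requirement $|\{e\}\cap\partial(h)|=1$ forces $e$ to be incident with $h$. Consequently, in a wheel-like brick all removable edges are incident with one common vertex; in particular a wheel-like brick cannot have two nonadjacent removable edges.

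For part~1, the ``if'' direction is the remark recorded before Problem~\ref{prob:wheel-like}: the removable classes of $K_4$ are its three perfect matchings (each a removable doubleton), and every vertex of $K_4$ lies on exactly one edge of each perfect matching, so any vertex serves as a hub. For ``only if'', let $G$ be a simple near-bipartite brick that is wheel-like. By the observation above $G$ has no two nonadjacent removable edges, so Theorem~\ref{thm:sim-near-bi-brick} forces $G\in\{K_4,\text{ the triangular prism},R_8\}$, and it remains to discard the last two. The triangular prism has no removable edge, so its removable classes are doubletons; the three doubletons $\{a_ia_j,b_ib_j\}$ --- where $a_1a_2a_3$ and $b_1b_2b_3$ are its two triangular faces and $a_ib_i$ its rungs --- have vertex sets with empty common intersection, so no vertex can serve as a hub. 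A similar finite inspection of the removable classes of $R_8$ shows it too admits no hub. Hence $G\cong K_4$.

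For part~2, first note that $G$ is not near-bipartite: a wheel-like near-bipartite brick is $K_4$ by part~1, and $K_4$ has four, not six, vertices. Being non-near-bipartite, $G$ has no removable doubleton (every brick with a removable doubleton is near-bipartite), so every removable class of $G$ is a single removable edge; by the observation these all lie in $\partial(h)$ for one vertex $h$, and by Theorem~\ref{thm:re_in_brick} there are at least $\Delta(G)$ of them, whence $d(h)=\Delta(G)$, every edge of $\partial(h)$ is removable, and no other edge of $G$ is removable. Since $G$ is bicritical, $G-h$ is factor-critical on five vertices; since $G$ is $3$-connected, $G-h$ is $2$-connected; and since $G$ is simple and planar on six vertices, $|E(G)|\le 12$. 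These constraints leave finitely many possibilities, which I would sort by $\Delta(G)=d(h)$. If $\Delta=5$, then $G-h$ is a $2$-connected factor-critical graph on five vertices with at most seven edges: if $G-h=C_5$ then $G=W_5$; otherwise $G-h$ has a chord, that chord is an edge of $G$ not incident with $h$, and deleting it leaves a brick --- hence a matching covered graph --- so the chord is removable, contradicting that no edge off $h$ is removable. If $\Delta=4$, a short degree count shows $G-h$ has at most eight edges, with equality forcing $G$ to be $4$-regular, i.e.\ the octahedron $K_{2,2,2}$; but $K_{2,2,2}$ has all twelve edges removable while $\Delta=4$, so no hub exists, and each of the finitely many graphs with $|E(G-h)|\in\{6,7\}$ is verified not to be wheel-like individually (being either near-bipartite, or having, for every vertex of maximum degree, a removable edge not incident with it). If $\Delta=3$, then $G$ is cubic on six vertices, hence $K_{3,3}$ (bipartite, not a brick) or the triangular prism (near-bipartite, excluded). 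Therefore $G\cong W_5$; and conversely $W_5$ is wheel-like, since its only removable classes are its five spokes --- the rim edges are non-removable, and $W_5$ is not near-bipartite because deleting any two of its edges leaves a triangle or the rim intact, so there is no removable doubleton --- and each spoke meets $\partial(h)$ in exactly one edge.

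The main obstacle is the finite case analysis in part~2 (and the analogous check of $R_8$ in part~1): for each small simple planar brick on six vertices other than $W_5$ one must verify that it is not wheel-like, which amounts to deciding, for specific small graphs, which edges are removable and whether some two-edge deletion produces a bipartite matching covered graph. The degree-of-$h$ and planarity bookkeeping keeps the list of graphs short and reduces each verification to exhibiting a single ``bad'' removable class, but these computations are inherently hands-on rather than conceptual.
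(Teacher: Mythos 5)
The paper never proves this lemma: it is imported wholesale from \cite{LX2024}, so there is no internal argument to compare yours against; what follows judges your sketch on its own. Its skeleton is sound. The opening observation (a removable edge is itself a removable class, so in a wheel-like brick all removable edges meet one vertex, hence no two nonadjacent removable edges) is correct, and Theorem \ref{thm:sim-near-bi-brick} then rightly confines a simple near-bipartite wheel-like brick to $K_4$, the triangular prism, or $R_8$; your prism analysis (the three parallel doubletons have no common vertex) is fine, as is the $K_4$ direction. In part 2 the chain ``not near-bipartite by part 1 $\Rightarrow$ no removable doubletons $\Rightarrow$ with Theorem \ref{thm:re_in_brick} the hub has degree $\Delta(G)$ and carries all removable edges'' is correct, and the $\Delta=5$ and $\Delta=3$ cases go through, with two small patches you should make explicit: for $\Delta=5$, speaking of a ``chord'' presupposes that $G-h$ is Hamiltonian, which needs planarity (the only non-Hamiltonian $2$-connected graph on five vertices is $K_{2,3}$, and $K_1\vee K_{2,3}$ contains $K_{3,3}$); and the chord is removable because $G-e$ still contains a spanning $W_5$ and $3$-connectivity plus bicriticality are preserved under adding edges, so $G-e$ is a brick and hence matching covered --- your phrase ``leaves a brick'' needs exactly this justification.

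The genuine shortfall is that the two decisive finite verifications are asserted rather than performed: that $R_8$ admits no hub (part 1), and that every simple planar six-vertex brick with $\Delta=4$ and $10$ or $11$ edges fails to be wheel-like (part 2; note also that $|E(G-h)|=6,7$ are indeed the only remaining values, since $\delta(G)\ge3$ forces $|E(G)|\ge10$ here). These handful of graphs are precisely where the content of the lemma lives --- as written, your argument shows only that any counterexample must be one of these explicitly describable graphs. The verification strategy you name (either the graph is near-bipartite, so part 1 applies, or one exhibits a removable class avoiding every vertex of maximum degree) is the right one and each check is routine, but until those inspections are written out the proof is incomplete rather than wrong.
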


\begin{pro}[\cite{CLM06}]\label{pro:sim-wl-6-vtx}
    Let $G$ be a simple brick on six vertices. Then $G$ is either nonsolid or $W_5$.
\end{pro}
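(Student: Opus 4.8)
The plan is to pass to the complement graph $\overline G$ and combine two inputs: a structural restriction forced by $G$ being a brick, and the known characterization of solidity in terms of vertex-disjoint odd cycles. It suffices to show that a \emph{solid} simple brick $G$ on six vertices is isomorphic to $W_5$. First I would record two properties. Since a brick is $3$-connected and bicritical, $\delta(G)\ge 3$, hence $\Delta(\overline G)\le 2$, so $\overline G$ is a vertex-disjoint union of paths and cycles. Moreover $G$ has no independent set of size $3$: if $\{a,b,c\}$ were independent and $\{x,y,z\}$ were the other three vertices, then $G-x-y$ has vertex set $\{a,b,c,z\}$ in which every matching uses at most one edge (only edges at $z$ are available), so $G-x-y$ has no perfect matching, contradicting bicriticality. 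Hence $\overline G$ is triangle-free, so each of its cycles has length $4$, $5$, or $6$.

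Next I would invoke the characterization (see \cite{Lucchesi2024}) that a brick is nonsolid if and only if it contains two vertex-disjoint odd cycles $C_1,C_2$ with $G-V(C_1)-V(C_2)$ having a perfect matching. On six vertices two vertex-disjoint odd cycles are necessarily two triangles spanning $V(G)$, and then $G-V(C_1)-V(C_2)$ is empty, so the perfect-matching condition is automatic. Thus $G$ is nonsolid if and only if $V(G)$ splits into two triples each inducing a triangle, equivalently $\overline G$ admits a proper $2$-colouring with both colour classes of size $3$. I would then use the elementary fact that a bipartite graph on six vertices that is a disjoint union of paths and cycles always has such a balanced proper $2$-colouring: even cycles and even-order paths split evenly, the odd-order path components ($P_1,P_3,P_5$) each carry a $\pm1$ imbalance that can be flipped, and there is an even number of them by a parity count on the six vertices, so the imbalances pair off to $0$. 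Consequently, $G$ solid forces $\overline G$ to be non-bipartite, i.e. $\overline G$ contains an odd cycle.

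Combining non-bipartiteness with triangle-freeness, the odd cycle in $\overline G$ has length $5$. Since only one vertex of $\overline G$ remains, it forms a component on its own, so $\overline G = C_5\cup K_1$ (a $5$-cycle together with an isolated vertex). Taking complements, the complement of $C_5$ is again $C_5$, and the isolated vertex of $\overline G$ becomes adjacent to all five others in $G$; therefore $G\cong W_5$, which completes the argument.

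The only external ingredient is the disjoint-odd-cycle criterion for solidity; the rest (passing to $\overline G$, the ``no independent triple'' observation, and the balanced $2$-colouring parity count) is routine. The main obstacle, should one wish for a self-contained proof avoiding that criterion, is the converse direction: given a balanced partition $V(G)=T_1\cup T_2$ into triangles one must exhibit an explicit separating cut that is not tight, and $\partial(T_1)$ itself need not work — there may be no perfect matching of $G$ meeting $\partial(T_1)$ in three edges — so a suitable vertex has to be moved from one side to the other before the cut becomes strictly separating.
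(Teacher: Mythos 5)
Your argument is correct, and it necessarily differs from the paper, which offers no proof of this proposition at all: it is quoted from \cite{CLM06}, where it is obtained as part of the general theory of solid bricks (in particular the theorem that the only simple planar solid bricks are odd wheels). Your route — pass to $\overline G$, note $\Delta(\overline G)\le 2$ from $3$-connectivity, rule out triangles in $\overline G$ via bicriticality, and then use the parity argument to show that a bipartite union of paths and even cycles on six vertices always admits a balanced proper $2$-colouring — is a clean, essentially self-contained derivation whose only imported ingredient is the Carvalho--Lucchesi--Murty characterization that a brick is nonsolid if and only if it has two vertex-disjoint odd cycles whose removal leaves a perfectly matchable graph (and you only need the ``if'' direction of it). One remark on your closing paragraph: the obstacle you anticipate for a fully self-contained proof is not actually there. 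If $V(G)=T_1\cup T_2$ with both $T_i$ triangles, then $\partial(T_1)$ itself is a separating cut: for an edge inside $T_1$ the third vertex of $T_1$ has a neighbour in $T_2$ (degree at least $3$) and the remaining two vertices of $T_2$ are adjacent, and for a cut edge the two leftover vertices on each side are adjacent; so every edge lies in a perfect matching meeting $\partial(T_1)$ exactly once. You do not need to exhibit a perfect matching crossing the cut three times: since $G$ is a brick, the nontrivial cut $\partial(T_1)$ cannot be tight, so ``separating and nontrivial'' already yields nonsolid. With that observation your proof becomes entirely elementary, avoiding the odd-cycle/solidity theorem altogether, which is arguably preferable to the paper's bare citation.
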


\begin{lem}\label{lem:wl-6vtx}
   Let $G$ be a wheel-like brick on 6  vertices and let  $h$ be the hub of $G$. Then %\\ {\rm 1)}
    $G$ is isomorphic to $W_5$, possibly with multiple edges incident with $h$.%; and \\ {\rm 2)} every edge of $\partial(h)$ is removable, or there exists a  vertex of degree 3 in $V(G)\setminus\{h\}$.
\end{lem}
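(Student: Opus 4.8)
The plan is to reduce to the known classification results for six-vertex bricks. First I would invoke Proposition \ref{pro:mutiedges-add}: since $G$ is wheel-like with hub $h$, all multiple edges of $G$ are incident with $h$. Hence the underlying simple graph $\underline{G}$ of $G$ is obtained by deleting multiple edges only at $h$, and $\underline{G}$ is still a brick on six vertices (deleting parallel copies of an edge from a brick keeps it a brick, as long as one copy remains — this is standard, or one can argue directly that $\underline{G}$ inherits 3-connectedness and bicriticality from $G$). Moreover $\underline{G}$ is still wheel-like with hub $h$: any removable class of $\underline{G}$ lifts to a removable class of $G$ (adding parallel edges at $h$ back does not destroy removability of an edge not at $h$, and a removable doubleton of $\underline{G}$ stays one in $G$ if neither edge becomes removable — here one must be slightly careful, but since the hub-condition only concerns whether each removable class meets $\partial(h)$, and $\partial_{\underline{G}}(h)\subseteq\partial_G(h)$, the wheel-like property with hub $h$ transfers down). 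So it suffices to show a simple wheel-like brick on six vertices with hub $h$ is $W_5$.

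Next I would apply Proposition \ref{pro:sim-wl-6-vtx}: a simple brick on six vertices is either nonsolid or is $W_5$. So the remaining task is to rule out the nonsolid case. Suppose for contradiction that $\underline{G}$ is a simple, nonsolid, wheel-like brick on six vertices. Since $\underline{G}$ is nonsolid, Corollary \ref{cor:robust-cut-solid-brick} gives a robust cut $\partial(X)$ of $\underline{G}$ together with subsets $X'\subseteq X$, $X''\subseteq\overline{X}$ such that $\underline{G}/\overline{X'}$ is a solid brick, $\underline{G}/\overline{X''}$ is a brick, and contracting both $X'$ and $X''$ yields a matching covered bipartite graph $H$ with the two contraction vertices $x',x''$ in different color classes. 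Because $\underline{G}$ has only six vertices, the bricks $\underline{G}/\overline{X'}$ and $\underline{G}/\overline{X''}$ are small — each has at most five vertices, hence each is $K_4$ (the only brick on four vertices; there is no brick on five vertices since bricks have an even number of vertices... wait, bricks need not have evenly many vertices — $K_4$ has four, but a brick must have an even number of vertices since it has a perfect matching). So each of $\underline{G}/\overline{X'}$, $\underline{G}/\overline{X''}$ has four vertices and is therefore $K_4$, forcing $|X'|=|X''|=3$ and $|X|=|\overline{X}|=3$. Then $H$ is a bipartite matching covered graph on $|X'| + (|\overline{X}|-|X''|)+\cdots$ — more precisely $H$ has $|V(\underline{G})| - (|X'|-1) - (|X''|-1) = 6-2-2=2$ vertices, so $H\cong K_2$, which cannot have $x',x''$ realized with the required edge structure unless the cut is degenerate. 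I would push exactly this counting to derive a contradiction: either $H$ is forced to be too small to be a legitimate matching covered bipartite graph arising from a nontrivial robust cut, or the sizes force $\partial(X)$ to be trivial/tight, contradicting that it is a robust cut.

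Alternatively — and this is probably the cleaner route — once I know $\underline{G}/\overline{X'}$ is a solid brick on at most six vertices that is $W_5$ or $K_4$ (using Proposition \ref{pro:solid-wl} or the six-vertex classification), I can apply Lemma \ref{lem:V(H)_re_edge} or Lemma \ref{bipar-H-nonre} to produce a removable edge $e$ of $\underline{G}$ with both ends avoiding the hub $h$: indeed $H$ has the contraction vertex with at least two neighbors in the bipartite piece (since $\underline{G}$ is 3-connected, every contraction vertex has degree $\ge 3$), so Lemma \ref{bipar-H-nonre} gives removable edges in $H$ incident with $\overline{x}$, and the combinatorics of splicing (Lemmas \ref{lem:removable_doubleton}, \ref{lem:re_also_re}) lift one of them to a removable class of $\underline{G}$ entirely inside $\overline{X}\cup N(\overline{X})$. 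Choosing the robust cut so that this region misses $h$ — which is possible because $h$ lies on only one side, or because one can choose $X$ with $h\notin \overline{X}\cup N(\overline{X})$ — yields a removable class disjoint from $\partial(h)$, contradicting wheel-likeness.

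\textbf{Main obstacle.} The delicate point is the last step: guaranteeing that the robust cut $\partial(X)$ supplied by Corollary \ref{cor:robust-cut-solid-brick} can be arranged so that the removable class produced on the $\overline{X}$-side avoids $\partial(h)$. In a six-vertex graph the two sides of the cut are both small (three vertices each), so $N(\overline{X})$ might well contain $h$, and I will need to use the explicit structure of $K_4$ and $W_5$ as the $C$-contractions — together with Proposition \ref{pro:mutiedges-add} pinning down where multiplicities sit — to either locate the hub on the correct side or to exhibit the removable class explicitly by hand. If this case analysis becomes unwieldy, the fallback is the pure counting argument of the second paragraph, showing that a nonsolid simple brick on six vertices simply cannot be wheel-like because its robust-cut decomposition forces $H$ to be trivial, contradicting nontriviality of a robust cut.
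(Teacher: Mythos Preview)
Your counting ``fallback'' is not a contradiction at all, and this is where the argument collapses. When $|V(G)|=6$ and $\partial(X)$ is a robust cut, you correctly get $|X'|=|X''|=3$ and both brick contractions isomorphic to $K_4$ (up to multiple edges). But then $H$ having only the two vertices $x',x''$ is a perfectly legitimate outcome: it just says the bipartite ``middle piece'' is trivial and $G$ is the splicing $G'(\overline{x'})\odot G''(\overline{x''})$ of the two $K_4$'s. Nothing about the definition of a robust cut forbids this; a robust cut is merely a nontight separating cut whose contractions are near-bricks, and here both contractions are already bricks. So no contradiction arises from the size of $H$.

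The same observation kills your ``cleaner route''. Lemma~\ref{lem:V(H)_re_edge} has the hypothesis $|N_H(\overline{x})|\ge 2$; with $H$ reduced to two vertices this fails, so the lemma gives you nothing. And even if you tried to produce removable edges directly from the $K_4$-contractions via Lemmas~\ref{lem:re_also_re} and~\ref{lem:removable_doubleton}, on six vertices every vertex of $\overline{X}$ is adjacent to $X$, so you cannot arrange for the resulting removable class to miss $\partial(h)$ by a side-choice argument. (Your reduction to the underlying simple graph is also shakier than you suggest --- if an edge of a removable doubleton of $\underline{G}$ at $h$ has a parallel copy in $G$, it becomes removable in $G$ and the doubleton structure changes --- but this is minor compared to the above.)

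What the paper does instead is exactly the finite case analysis you were hoping to avoid. Once one knows $G$ is a splicing of two copies of $K_4$ (possibly with multiple edges at $h$, by Proposition~\ref{pro:mutiedges-add}), there are only a handful of graphs that can arise; the paper lists them explicitly (the triangular prism plus four nonplanar graphs) and checks by exhibiting removable edges that none is wheel-like. The solid and planar cases are dispatched separately by Proposition~\ref{pro:solid-wl} and Lemma~\ref{lem:simple-nbb-wl-K4}. There does not appear to be a clean structural shortcut around the enumeration at this size.
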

\begin{proof}
    As $G$ is wheel-like, the only possible  multiple edges in $G$ are incident with $h$ by Proposition \ref{pro:mutiedges-add}.
    %If  the underlying simple graph of $G$ is  $W_5$, the result is obvious. So we consider the case when $|V(G)|=6$.
    If $G$ is solid or planar, then the underlying simple graph of $G$ is isomorphic to $W_5$ by Proposition \ref{pro:sim-wl-6-vtx} and Lemma \ref{lem:simple-nbb-wl-K4}.  So the result is obviously.

    Suppose that $G$ is nonsolid and nonplanar. Then $G$ has a nontrivial separating cut $\partial(X)$.
    As $|V(G)|=6$, $|X|=3$ and $|\overline{X}|=3$. Note that $G$ is 3-connected and both $\partial(X)$-contractions are matching covered. Then both $\partial(X)$-contractions are isomorphic to $K_4$ (up to multiple edges).
    It can be checked that $G$ is isomorphic to {the triangular prism} or one of the graphs in Figure \ref{fig:C6-add-edges}. All the graphs in Figure \ref{fig:C6-add-edges} are not wheel-like (the bold edges are removable). By Lemma   \ref{lem:simple-nbb-wl-K4}, {the triangular prism} is not wheel-like. Therefore, the result holds.\end{proof}

 \begin{figure}[!h]
    \centering
    \includegraphics[totalheight=4.5cm]{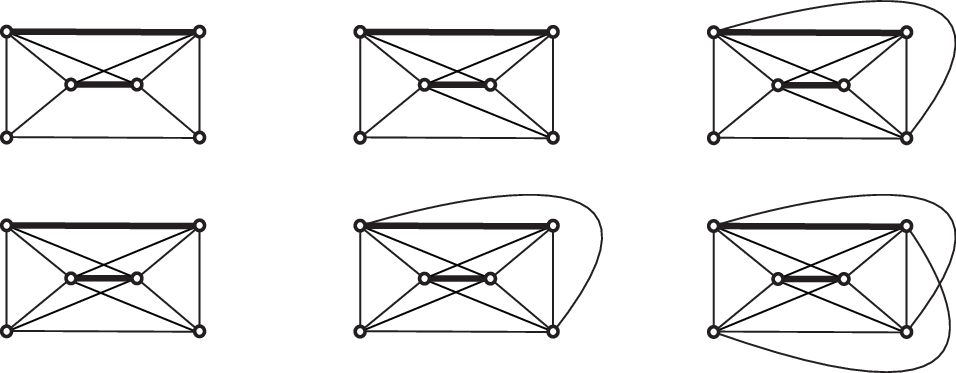}
    \caption{Nonplanar nonsolid bricks on six vertices, where the bold edges are removable.}
    \label{fig:C6-add-edges}
\end{figure}

\begin{lem}[\cite{LX2024}]\label{lem:at_least_one_wheel-like}
    Let $G_1$ and $G_2$ be two disjoint bricks and let $u\in V(G_1)$ and $v\in V(G_2)$.
    Assume that $G={G_1(u)\odot G_2(v)}$ is a brick.  \\
    {\rm 1)} If $G$ is wheel-like, then at least one of $G_1$ and $G_2$ is wheel-like such that $u$ or $v$ is its hub.\\
    {\rm 2)} If $G_1$ is  wheel-like with  $u$ as its hub, and every edge of $\partial_{G_1}(u)$ lies in some removable class of $G_1$, then $G_2$ is also wheel-like.
\end{lem}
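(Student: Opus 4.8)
\textbf{Proof proposal for Lemma~\ref{lem:at_least_one_wheel-like}.}
The plan is to argue directly from the structure of removable classes under splicing, using the correspondence between edges of $G$ and edges of $G_1-u$ and $G_2-v$. Recall that $G=(G_1(u)\odot G_2(v))_\theta$ is obtained from $G_1-u$ and $G_2-v$ by adding, for each $e\in\partial_{G_2}(v)$, an edge joining the $V(G_2)-v$ end of $e$ to the $V(G_1)-u$ end of $\theta(e)$; call these the \emph{spliced edges}. Every edge of $E(G_1)\setminus\partial_{G_1}(u)$ and every edge of $E(G_2)\setminus\partial_{G_2}(v)$ is an edge of $G$, while the spliced edges are in bijection with $\partial_{G_1}(u)$ and with $\partial_{G_2}(v)$. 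The key technical inputs are the cut-lemmas already available: by Proposition~\ref{thm:MC_IS_MC}, $\partial_G(V(G_2)-v)$ is a separating cut of $G$ with $\partial(X)$-contractions $G_1$ and $G_2$ (up to the renaming of $u$, $v$ to the contraction vertices), so Lemma~\ref{lem:re_also_re} and Lemma~\ref{lem:removable_doubleton} let me transfer removable classes between $G$ and $G_i$ across this cut.

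For part~1, I would argue by contraposition. Suppose neither $G_1$ is wheel-like with hub $u$ nor $G_2$ is wheel-like with hub $v$; I must exhibit a removable class $R$ of $G$ that has no vertex meeting all of it, equivalently (since $G$ is a brick with its own potential hub $h$) show $G$ is not wheel-like. Because $G_1$ is not wheel-like with hub $u$, $G_1$ has a removable class $R_1$ with no edge incident with $u$; then $R_1\subseteq E(G_1)\setminus\partial_{G_1}(u)\subseteq E(G)$, and I use the cut-lemmas to push $R_1$ (or, in the removable-doubleton case, a single edge obtained from Lemma~\ref{lem:removable_doubleton}) to a removable class of $G$ consisting of edges inside $V(G_1)-u$, hence avoiding a neighborhood that could be centered at $h$ unless $h\in V(G_1)-u$. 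Symmetrically $G_2$ contributes a removable class of $G$ inside $V(G_2)-v$. Since $V(G_1)-u$ and $V(G_2)-v$ are disjoint, these two removable classes of $G$ cannot both have an edge incident with a common vertex $h$, so $G$ is not wheel-like. The care needed here is that a removable doubleton of $G_i$ might not survive intact; this is exactly what Lemma~\ref{lem:removable_doubleton} handles, provided I first check the edge of $R_i\cap C$ — which is empty in our situation since $R_i\subseteq E(G_i)\setminus\partial_{G_i}(\cdot)$ — or is removable in the other contraction, so the hypothesis of that lemma is met automatically.

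For part~2, assume $G_1$ is wheel-like with hub $u$ and every edge of $\partial_{G_1}(u)$ lies in some removable class of $G_1$; I must show $G_2$ is wheel-like. Let $R_2$ be any removable class of $G_2$. If $R_2\cap\partial_{G_2}(v)=\emptyset$ then $R_2\subseteq E(G)$ and, arguing as above via Lemma~\ref{lem:re_also_re}/Lemma~\ref{lem:removable_doubleton}, it produces a removable class of $G$ inside $V(G_2)-v$; since $G$ is wheel-like (given), this forces the hub $h$ of $G$ to lie in $V(G_2)-v$, so $R_2$ meets $h$ — but I actually want the hub of $G_2$ to be $v$, so instead I should run the argument the other way: take a removable class $R_1$ of $G_1$ not incident with $u$ (these exist precisely because $G_1$ is a wheel-like brick other than the degenerate case, or handle $K_4$ separately), transfer it to a removable class of $G$ inside $V(G_1)-u$, and deduce $h\in V(G_1)-u$; then every removable class of $G$, in particular every one coming from a removable class $R_2$ of $G_2$ not meeting $\partial_{G_2}(v)$, meets $h\in V(G_1)-u$, which is impossible unless that transferred class is empty — a contradiction — forcing $R_2\cap\partial_{G_2}(v)\neq\emptyset$ for \emph{every} removable class $R_2$ of $G_2$, i.e. $v$ is a hub of $G_2$. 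The edges of $\partial_{G_2}(v)$ that must be "used up" are matched against edges of $\partial_{G_1}(u)$ under $\theta$, and the hypothesis that every edge of $\partial_{G_1}(u)$ lies in a removable class of $G_1$ is what guarantees (via Lemma~\ref{lem:re_also_re} and Lemma~\ref{lem:removable_doubleton}, applied across the cut with $H=G_1$) that no spliced edge is "wasted" — i.e. each yields a removable edge of $G$ and hence forces the corresponding constraint.

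\textbf{Main obstacle.} The delicate point is the bookkeeping for removable \emph{doubletons}: a removable doubleton of $G_i$ need not be a removable doubleton of $G$, and an edge of a removable class of $G$ meeting the splicing cut need not descend to a removable class of $G_i$ with both edges on the same side. I expect to spend most of the effort verifying that the hypotheses of Lemma~\ref{lem:removable_doubleton} (the edge of $R\cap C$ is removable in the other contraction, or $R\cap C=\emptyset$) are genuinely satisfied in each case — using that in $G_1$ every edge of $\partial_{G_1}(u)$ lies in a removable class, together with Proposition~\ref{pro:mutiedges-add} to control multiple edges at the splicing vertices — and in ruling out the small degenerate bricks ($K_4$, the triangular prism) separately, where "a removable class not incident with $u$" may fail and one checks wheel-likeness by hand.
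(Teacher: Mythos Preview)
The paper does not supply its own proof of this lemma: it is quoted from \cite{LX2024} and stated without argument, so there is nothing in the present paper to compare against. I assess your proposal on its own merits.

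Your argument for part 1) is correct and is the natural one: if neither $G_i$ is wheel-like with the splicing vertex as hub, each has a removable class disjoint from $\partial_{G_i}$ of that vertex, and Lemmas~\ref{lem:re_also_re} and~\ref{lem:removable_doubleton} (with $R\cap C=\emptyset$) yield a removable edge of $G$ on each side of the cut; these two edges are nonadjacent, so $G$ is not wheel-like.

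Your argument for part 2) has a genuine error. After abandoning your first idea you write: ``take a removable class $R_1$ of $G_1$ not incident with $u$ (these exist precisely because $G_1$ is a wheel-like brick \ldots).'' This is exactly backwards. The hypothesis is that $G_1$ is wheel-like \emph{with $u$ as its hub}, which by definition means \emph{every} removable class of $G_1$ meets $\partial_{G_1}(u)$. No such $R_1$ exists, and the argument collapses. Your first approach was in fact the right one: assuming $G$ is wheel-like with hub $h$, if $G_2$ is not wheel-like then neither $v$ nor $h$ is a hub of $G_2$. The failure at $v$ gives (via Lemma~\ref{lem:removable_doubleton}) a removable edge of $G$ inside $V(G_2)-v$, forcing $h\in V(G_2)-v$. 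The failure at $h$ gives a removable class $R_h$ of $G_2$ avoiding $h$; if $R_h\cap\partial_{G_2}(v)=\emptyset$ you finish as before, and if $R_h$ meets the cut in an edge $e'$, the hypothesis that the corresponding edge of $\partial_{G_1}(u)$ lies in a removable class of $G_1$ is precisely what lets Lemmas~\ref{lem:re_also_re} and~\ref{lem:removable_doubleton} (applied with $H=G_1$ or $H=G_2$ according to whether that class is a singleton or a doubleton) produce a removable edge of $G$ not incident with $h$. That case split is the work you should have carried out; the route you chose instead is a dead end.
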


Lu and Xue characterized wheel-like bricks that are obtained from the splicing of  two odd wheels.

\begin{lem}[\cite{LX2024}]\label{Wi_Wj}
    Let $G$ and $H$ be two odd wheels such that  $V(G)=\{u_h,u_1,u_2,\ldots, u_s\}$ and $V(H)=\{v_h,v_1,v_2,\ldots, v_t\}$, where $u_h$ and $v_h$ are the hubs of $G$ and  $H$ respectively. Assume that
     $u\in V(G)$, $v\in V(H)$, $d_G(u)=d_H(v)$, and  $G(u)\odot H(v)$ is a brick.
    The graph $G(u)\odot H(v)$ is wheel-like if and only if the following statements hold.\\

    {\rm 1)}. $|\{u,v\}\cap \{u_h,v_h\}|=1$. Without loss of generality, assume that $u=u_h$, that is $v \neq v_h$. Then $|V(G)|\ge 6$.

    {\rm 2)}. All the multiple edges of $G$ and $H$ are incident with $u_h$ and $v_h$, respectively.

    {\rm 3)}. Without loss of generality, assume that $v=v_t$  and $\{u_1v_1,u_rv_{t-1}\}\subset E(G(u)\odot H(v))$, where $1\leq r\leq s$. Then $r\neq 1$ and $u_1u_r\notin E(G)$.
\end{lem}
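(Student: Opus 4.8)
The plan is to prove the two implications separately. Throughout, write $\Gamma := G(u)\odot H(v)$ and set $X := V(G)\setminus\{u\}$, so that $\overline X = V(H)\setminus\{v\}$. The two $\partial_\Gamma(X)$-contractions of $\Gamma$ are isomorphic to $G$ and to $H$ (in each, the contracted vertex plays the role of the splicing vertex, and the connector edges of $\Gamma$, i.e.\ $\partial_\Gamma(X)$, correspond to $\partial_G(u)$ and to $\partial_H(v)$). Hence $\partial(X)$ is a non-trivial separating cut; since $\Gamma$ is a brick it is therefore not tight, so it is a robust cut whose two contractions are bricks. This lets us transfer removable classes between $\Gamma$, $G$ and $H$ via Lemmas~\ref{lem:re_also_re} and~\ref{lem:removable_doubleton}, Corollary~\ref{cor:robust-cut-solid-brick}, and Lemmas~\ref{bipar-H-nonre} and~\ref{lem:V(H)_re_edge}. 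We also use repeatedly: (a) for $k\ge5$ the only removable classes of $W_k$ are its (possibly multiple) spokes, all incident with the hub, whereas $K_4$ has exactly three removable doubletons, each a pair of nonadjacent edges; (b) a parallel edge is always removable; and (c) a wheel-like brick has no two nonadjacent removable edges (two such edges are one-element removable classes, each forced to lie in $\partial(h)$ for the hub $h$, hence adjacent) — combined with Theorem~\ref{thm:sim-near-bi-brick} and Lemma~\ref{lem:simple-nbb-wl-K4}(1), (c) shows the only simple near-bipartite wheel-like brick is $K_4$.

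\emph{Necessity.} Assume $\Gamma$ is wheel-like. By Lemma~\ref{lem:at_least_one_wheel-like}(1) one of $G,H$, say $G$, is wheel-like with its splicing vertex $u$ as a hub; as $W_k$ with $k\ge5$ is wheel-like only with its genuine hub, either $u=u_h$ or $|V(G)|=4$. If $|V(G)|=4$ then, since $d_H(v)=d_G(u)$, a short case analysis on $H$ shows $\Gamma$ is not wheel-like (when $H$ also has four vertices $\Gamma$ is, up to multiple edges, the triangular prism, excluded by Lemma~\ref{lem:simple-nbb-wl-K4}(1); otherwise one produces two nonadjacent removable edges of $\Gamma$), a contradiction. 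Hence $u=u_h$ and $s=|V(G)|-1\ge5$. Next $v\ne v_h$: otherwise $\Gamma$ is the splicing of two odd wheels at their hubs, so by the isomorphisms above every connector edge of $\Gamma$ is a removable spoke in each of the two contractions, hence removable in $\Gamma$ by Lemma~\ref{lem:re_also_re}; the connectors form a bipartite multigraph between the two rims with no isolated vertex on either side (each rim vertex carries a spoke), so they do not all meet one vertex, giving two nonadjacent removable edges of $\Gamma$ — contradicting (c) (with the obvious variant when one wheel is a $K_4$ with multiple spokes). Therefore $|\{u,v\}\cap\{u_h,v_h\}|=1$, and we may relabel so that $v=v_t$ and $\{u_1v_1,u_rv_{t-1}\}\subseteq E(\Gamma)$. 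Condition~2 is now immediate from Proposition~\ref{pro:mutiedges-add}: a multiple edge of $G$ at a rim vertex, or of $H$ at a vertex other than $v_h$, survives into $\Gamma$ as a multiple edge not incident with its hub. For condition~3: if $r=1$ then $\Gamma-\{u_1,v_h\}$ has no edge between $\{u_2,\dots,u_s\}$ and $\{v_1,\dots,v_{t-1}\}$, so $\Gamma$ is not $3$-connected, contradicting that it is a brick; and if $u_1u_r\in E(G)$ one checks that the rim edge $u_1u_r$ of $G$ becomes removable in $\Gamma$ (the path $u_1v_1v_2\cdots v_{t-1}u_r$ serves as an ear parallel to it), while $\partial(v_h)$ still contains a removable edge disjoint from it — once more contradicting (c).

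\emph{Sufficiency.} Assume 1--3. Then $s\ge5$, $t\ge5$ (else $d_H(v_t)=3<d_G(u_h)$), all multiple edges lie at $u_h$ or $v_h$, and $\Gamma$ is $3$-connected by~3. We claim $\Gamma$ is wheel-like with hub $v_h$, which we verify by determining all removable classes of $\Gamma$. First, every edge of $\partial_\Gamma(v_h)$ is removable: an edge $v_hu_i$ arising from the multiple edge $v_hv_t$ of $H$ is a spoke of $W_s$ in $\Gamma/\overline X\cong G$ and a parallel copy of $v_hv_t$ in $\Gamma/X\cong H$, hence removable in both, so in $\Gamma$ by Lemma~\ref{lem:re_also_re}; for a spoke $v_hv_j$ of $H$ (a loop in $\Gamma/\overline X$) one uses Corollary~\ref{cor:robust-cut-solid-brick} together with Lemma~\ref{lem:V(H)_re_edge} — with $G=\Gamma/X$ in the wheel-like role, its hub having all incident edges in removable classes — to realize the required removable edges on the $H$-side. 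Conversely, each rim edge of $G$ is non-removable already in $\Gamma/\overline X\cong W_s$, and each edge $v_jv_{j+1}$ and each connector $u_1v_1,\,u_rv_{t-1}$ is non-removable in $\Gamma/X\cong W_t$; since neither $W_s$ nor $W_t$ has a removable doubleton, Lemma~\ref{lem:removable_doubleton} leaves room only for removable doubletons of $\Gamma$ that meet $\partial(v_h)$ in exactly one edge, and Lemma~\ref{lem:re_also_re} rules out these rim and connector edges as removable in $\Gamma$. Hence every removable class of $\Gamma$ is a single edge incident with $v_h$, so $\Gamma$ is wheel-like with hub $v_h$.

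The step I expect to be the main obstacle is the negative half of sufficiency — proving that no rim edge of $G$, no rim edge of $H$, and neither connector is removable in $\Gamma$, and that $\Gamma$ has no stray removable doubleton. Because $\partial(X)$ is separating but not tight, non-removability does not pull back from the contractions for free (Lemma~\ref{lem:re_also_re} gives only one direction), so this forces a careful pass through the robust-cut machinery (Corollary~\ref{cor:robust-cut-solid-brick}, Lemmas~\ref{bipar-H-nonre}, \ref{lem:V(H)_re_edge}, \ref{lem:removable_doubleton}) while tracking the multiplicities at $u_h$ and $v_h$ forced by $d_G(u)=d_H(v)$; conditions 1--3 are precisely what make that apparatus apply. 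A secondary nuisance is the low-vertex bookkeeping (e.g.\ $|V(G)|=4$ or $t$ small) in the necessity argument.
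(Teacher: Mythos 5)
You should first note that the paper contains no proof of Lemma \ref{Wi_Wj} at all: it is imported verbatim from \cite{LX2024}, so your attempt has to stand on its own, and as written it does not. The decisive half is sufficiency, and specifically the negative part: showing that no rim edge of $G$, no rim edge of $H$, and neither connector edge lies in a removable class of $\Gamma=G(u)\odot H(v)$ avoiding $\partial(v_h)$. You concede in your last paragraph that this is the main obstacle, and the one sentence in the body that addresses it uses Lemma \ref{lem:re_also_re} in the wrong direction: since $\partial(X)$ is separating but \emph{not} tight, non-removability in $\Gamma/X$ or $\Gamma/\overline{X}$ does not transfer to $\Gamma$. That transfer is precisely where the hypotheses $r\neq 1$ and $u_1u_r\notin E(G)$ must be used, i.e.\ it is the heart of the lemma, and pointing at Corollary \ref{cor:robust-cut-solid-brick} and Lemmas \ref{bipar-H-nonre}, \ref{lem:V(H)_re_edge} cannot fill it in, because those results only \emph{produce} removable edges; none of them certifies that an edge is non-removable or that $\Gamma$ has no stray removable doubleton.

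There are also two concrete false steps. First, in sufficiency you dismiss small $H$ via ``$t\ge5$, else $d_H(v_t)=3<d_G(u_h)$''; this is wrong, because condition 2) permits (and $d_H(v_t)=d_G(u_h)\ge 5$ in fact forces) parallel spokes on $v_hv_t$, so $|V(H)|=4$ with underlying graph $K_4$ is a legitimate case; such an $H$ does have removable doubletons (e.g.\ $\{v_1v_h,v_2v_3\}$), so your appeal to ``neither $W_s$ nor $W_t$ has a removable doubleton'' collapses and the doubleton analysis via Lemma \ref{lem:removable_doubleton} must actually be carried out there. Second, in necessity, condition 2) is not ``immediate from Proposition \ref{pro:mutiedges-add}'': a multiple rim edge of $H$ incident with $v_t$ does not survive into $\Gamma$ as a multiple edge (its copies are re-attached by $\theta$, possibly to distinct rim vertices of $G$), and in any case Proposition \ref{pro:mutiedges-add} only locates the multiple edges of $\Gamma$ at the hub of $\Gamma$, a vertex you never identify in that direction; to reach a contradiction you must additionally exhibit a removable class of $\Gamma$ missing the forced hub location. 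The same unproved multiplicity restriction is silently used in your $r=1$ argument, which assumes the only $G$--$H$ connectors besides the spoke copies are $u_1v_1$ and $u_rv_{t-1}$. So the overall strategy (treat $\partial(X)$ as a robust cut, transfer removable classes, let conditions 1)--3) control the exceptional edges) is reasonable, but the proof as proposed has genuine gaps at exactly the points where the lemma has content.
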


Let $\mathcal{K}_4^+$ be the family of graphs that  the underlying simple graph is isomorphic to $K_4$, and all the multiple edges have the same ends. Then every graph in $\mathcal{K}_4^+$ is wheel-like; every graph in $\mathcal{K}_4^+$, other than $K_4$, has exactly two hubs.
%Then $K_4\in \mathcal{K}_4^+$. And  every graph in $\mathcal{K}_4^+$ is wheel-like; every graph in $\mathcal{K}_4^+$, other than $K_4$, has exactly two hubs.
Let $\mathcal{G}_1$ be the family of wheel-like odd wheels {(an odd wheel having a hub, possibly with multiple edges incident with the hub).}
It should be noted that  every graph in $\mathcal{G}_1\setminus \mathcal{K}_4^+$  has exactly one hub, and all the multiple edges of it are incident with the hub. For an integer $j$ ($j>0$), let $\mathcal{G}_{j+1}$ be the family of graphs with at least 8 vertices  gotten by the splicing of one graph in $\mathcal{G}_j$ and one graph in $\mathcal{G}_{1}$, say $G_j$ and $H_j$, such that
\\
 1) if $H_j\cong K_4$, then $u_j\notin U(G_j)$; if $H_j\in\mathcal{K}_4^+\setminus\{K_4\}$, then $v_j\in U(H_j)$;   if $H_j\notin \mathcal{K}_4^+$, then $|\{u_j, v_j\}\cap (U(G_j)\cup U(H_j))|=1$; and \\
2) if $|V(H_j)|=4$ and $u_j\notin U(G_j)$, then for any nonremovable edge $e$ of $\partial(v_j)$ in $H_j$, the corresponding edge of $e$ (incident with $u_j$) in $G_j$ is not incident with any vertex of $U(G_j)$.\\
Where $U(G)$ is the set of vertices with maximum degree in $G$; $u_j$ and $v_j$ are the splicing vertices of $G_j$ and $H_j$, respectively.
%{Note that if $|V(H_j)|=4$, $v_{j+1}$ is incident with at most two nonremovable edges; and $|N_{G_{j}}(v_{j})\setminus ((\cup_{i=1}^{j} H_i)\cap V(G_j) )|>2$ as $G_{j}$ is a brick, and $|(\cup_{i=1}^{j} H_i)\cap V(G_j)|=1$ (see Lemma \ref{lem:wheel-like} below). So there exist  bricks satisfying Condition 2) in above definition.}
%Let $\mathcal{G}'$ be the family of graphs  gotten by the splicing of two graphs in $\bigcup_{i=2}^\infty \mathcal{G}_i$, say $G_s'$ and $G_t'$, such that $G_s'\in \mathcal{G}_s$, $G_t'\in \mathcal{G}_t$ and $|\{u'_s,u'_t\}\cap ((\cup_{i=1}^s H_i\cap V(G'_s))\cup(\cup_{i=1}^t H_i\cap V(G'_t)))|=1$, where $u'_s$ and $u'_t$ are the splicing vertices of $G_s'$ and $G_t'$, respectively ($G_s'$ and $G_t'$ may be the same graph. In this case, $u'_s\neq u'_t$). Let $\mathcal{G}= \mathcal{G}' \cup (\cup_i \mathcal{G}_i)$.
{ Let $\mathcal{G}= \cup_i \mathcal{G}_i$.}

It should be noted that the maximum degree of any graph different from $K_4$ in $\mathcal{G}$ is at least 4.
The vertex of a graph in $\mathcal{G}$ with the maximum degree is called a {\em hub} of it.
Moreover, we have the following lemma.

\begin{lem}\label{lem:wheel-like}
Let $G\in\mathcal{G}$ and $|V(G)|>4$. Then the following statements hold.

{\rm 1)}. {$|U(G)|=1$.}
%if $G=G'_j\odot G'_k \in \mathcal{G}'$, then $|((\cup_{i=1}^j H_i)\cup (\cup_{i=1}^k H_i))\cap V(G)|=1$ $($where $G'_j\in \mathcal{G}_j$ and $G'_k\in \mathcal{G}_k)$.
%Moreover, the vertex in  $(U(G_n)\cup U(H_n))\cap V(G)$ is the only vertex of maximum degree in $G$.}

{\rm 2)}. Every edge incident with the hub of $G$ is removable in $G$.
\end{lem}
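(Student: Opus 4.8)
The plan is to induct on the index $i$ such that $G\in\mathcal{G}_i$. The base case $i=1$ is $\mathcal{G}_1\setminus\mathcal{K}_4^+$ together with the graphs in $\mathcal{K}_4^+$ with more than four vertices: here $G$ is an odd wheel $W_k$ ($k\ge5$), possibly with multiple edges at the hub $h$, so $U(G)=\{h\}$ (the rim vertices have degree $3$, and even a single multiplicity at $h$ pushes $d(h)$ strictly above $3$; for $W_k$ with $k\ge5$ already $d(h)=k>3$). For part 2), every edge at the hub of an odd wheel other than $K_4$ is removable by the standard fact cited in the introduction (Exercise 2.2.4 of \cite{Lucchesi2024}), and multiple edges are always removable, which disposes of the $\mathcal{K}_4^+$ graphs with more than four vertices as well. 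So the base case is immediate.

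For the inductive step, write $G\in\mathcal{G}_{i+1}$ as $G=G_i(u_i)\odot H_i(v_i)$ with $G_i\in\mathcal{G}_i$, $H_i\in\mathcal{G}_1$, and $u_i,v_i$ the splicing vertices, satisfying conditions 1) and 2) in the definition. First I would pin down the hub. By the induction hypothesis $|U(G_i)|=1$; call its unique hub-or-maximum-degree vertex $h_i$. Condition 1) of the definition forces the splicing to avoid identifying two maximum-degree vertices: in each of the three sub-cases ($H_i\cong K_4$; $H_i\in\mathcal{K}_4^+\setminus\{K_4\}$; $H_i\notin\mathcal{K}_4^+$) exactly one of $u_i,v_i$ is allowed to be a hub, so after splicing the degrees of the surviving copies of the hubs are unchanged, and every other vertex of $G$ either keeps its degree from $G_i$ or from $H_i$, or is the new splice vertex whose degree equals the common value $d_{G_i}(u_i)=d_{H_i}(v_i)$. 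A short degree bookkeeping argument then shows the unique maximum-degree vertex of $G$ is the one inherited hub — $h_i$ if $u_i\notin U(G_i)$, otherwise $v_i$'s hub in $H_i$ — and that it is strictly larger than all other degrees (using $|V(G)|\ge 8$ so that $G\neq K_4$, hence $\Delta(G)\ge4$). This gives 1).

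For part 2), I would use Lemma \ref{lem:at_least_one_wheel-like} and Lemma \ref{lem:re_also_re} together with the cut structure of a splice. The splicing of matching covered graphs produces a separating (indeed, essentially a $2$-cut or $3$-cut-type) edge cut $\partial(X)$ with $G/X\cong G_i$ and $G/\overline{X}\cong H_i$ up to contracting the other side to the splice vertex; an edge $e$ at the hub $h$ of $G$ lies entirely inside one of the two contractions, say it corresponds to an edge $\tilde e$ at the hub of $G_i$ (the case of $H_i$ being analogous and easier since $H_i\in\mathcal{G}_1$). By induction $\tilde e$ is removable in $G_i$, i.e.\ in $G/\overline{X}$-via-contraction; one must then check $e$ is removable in the other contraction. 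If $\tilde e$ is removable in $G_i$ in the strong sense and also in $H_i$'s contraction, Lemma \ref{lem:re_also_re} finishes. The delicate point — and the main obstacle — is the edge of $\partial(X)$ itself, i.e.\ an edge at $h$ that crosses the cut: for this I would invoke part 2) of Lemma \ref{lem:at_least_one_wheel-like} (the hub of $G_i$ having all incident edges in removable classes, which is exactly what the induction delivers for $G_i\in\mathcal{G}$ combined with the removability statement we are proving) and condition 2) of the definition of $\mathcal{G}_{j+1}$, which was designed precisely to guarantee that when $H_i$ is a $K_4$ spliced at a non-hub, the nonremovable edges of $\partial(v_i)$ in $H_i$ match up with edges of $G_i$ not touching the hub — so those troublesome edges are not at $h$ at all. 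Handling the interaction between removable doubletons of $H_i$ (which occur only when $H_i\cong K_4$) and edges at $h$, via Lemma \ref{lem:removable_doubleton}, is where the argument needs the most care, and I expect this $K_4$-splice case to be the bulk of the proof.
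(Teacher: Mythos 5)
Your overall strategy is the paper's: induct on the index $i$ of $\mathcal{G}_i$, settle the base case with odd wheels, do degree bookkeeping for 1), and use Lemmas \ref{lem:re_also_re} and \ref{lem:removable_doubleton} together with condition 2) in the definition of $\mathcal{G}_{j+1}$ for 2). But two points are genuine gaps rather than omitted routine detail. For 1), your key claim that ``in each of the three sub-cases exactly one of $u_i,v_i$ is allowed to be a hub'' is false when $H_i\in\mathcal{K}_4^+\setminus\{K_4\}$ and $u_i=h_i$: the definition only demands $v_i\in U(H_i)$, so both splicing vertices can be maximum-degree vertices, and then the hub of $G$ is the surviving multi-edge end $t_i'$ of the four-vertex factor; one has to verify $d_{H_i}(t_i')=d_{H_i}(v_i)=d_{G_i}(h_i)$ strictly exceeds every other surviving degree, which is exactly the sub-case the paper treats separately. (Also, splicing deletes both $u_i$ and $v_i$; there is no ``new splice vertex'', though this slip does not affect the degree count. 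Likewise, graphs in $\mathcal{K}_4^+$ have exactly four vertices, so your base case is just the odd wheels with at least six vertices.)

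For 2), the part you defer is precisely where the proof lives, and the tool you name for it does not do the job. When the hub of $G$ lies in the four-vertex factor ($H_i\in\mathcal{K}_4^+$ with $u_i=h_i$, or $H_i\notin\mathcal{K}_4^+$ with $|V(H_i)|=4$ and $u_i=h_i$), the induction hypothesis says nothing about $H_i$, and the simple edges at the hub of $G$ are \emph{not} removable in $H_i$; the argument must exhibit, for each such edge, a removable doubleton of $H_i$ whose other edge lies in $\partial_{H_i}(v_i)$, note that the corresponding edge of $G_i$ is incident with $h_i$ and hence removable in $G_i$ by induction, and then invoke Lemma \ref{lem:removable_doubleton}. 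Part 2) of Lemma \ref{lem:at_least_one_wheel-like} cannot substitute: it concludes that the other factor is wheel-like, not that any edge is removable in $G$. In the complementary case $|V(H_i)|=4$ with $u_i\notin U(G_i)$, the role of condition 2) should also be made precise: for $H_i\cong K_4$ every edge of $\partial_{H_i}(v_i)$ is nonremovable, so condition 2) forces no edge of the splicing cut to be incident with $h_i$ at all, and for $H_i\in\mathcal{K}_4^+\setminus\{K_4\}$ the only crossing edges at $h_i$ correspond to removable edges of $H_i$, which is what makes Lemma \ref{lem:re_also_re} applicable. Finally, removable doubletons occur in all graphs whose underlying simple graph is $K_4$, not only in $K_4$ itself, so this case analysis cannot be confined to $H_i\cong K_4$. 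These cases occupy most of the paper's proof and are absent from yours.
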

\begin{proof}

    Without loss of generality, assume that $G\in \mathcal{G}_{n}$ $(n\in \{1,2,3,\ldots\})$.
   We will prove the result by induction on $n$.
    If $n=1$, every graph in $\mathcal{G}_1$ is a wheel-like odd wheel with at least 6 vertices. So, 1) and 2) hold for this case.
    Suppose that the result holds when $n\le s$. Now we consider $G\in \mathcal{G}_{s+1}$, where $s\ge1$.
    Then $G=G_s(u_s)\odot H_s(v_s)$, where $G_s\in\mathcal{G}_s$ and $H_s\in\mathcal{G}_1$.
    We may assume that $|V(G_s)|\ge6$ (if $s=1$, by interchanging $G_1$ with $H_1$ if necessary, such that $|V(G_1)|\ge6$).
    Let $h_s\in U(G_s)$.

    As $|V(G_s)|\ge6$, by inductive hypothesis, we have $U(G_s)=\{h_s\}$.
    Obviously, $d_{G_s}(h_s)\ge4$.
    If $H_s\cong K_4$, then $u_j\notin U(G_j)$ and so, $h_s\in V(G)$. Hence, $U(G)=\{h_s\}$ (note that every vertex in $K_4$ is of degree 3).
    If $H_s\in \mathcal{K}_4^+\setminus\{K_4\}$, let $U(H_s)=\{t_s,t'_s\}$. Since $v_s\in U(H_s)$, without loss of generality, assume that $v_s=t_s$.
    If $u_s=h_s$, then $(U(G_s)\cup U(H_s))\cap V(G)=\{t'_s\}$ and so, $U(G)=\{t'_s\}$.
    If $u_s\neq h_s$, then $U(G)=\{h_s\}$, as $d_G(h_s)=d_{G_s}(h_s)> d_{G_s}(u_s)=d_{H_s}(v_s)$, $v_s=t_s\in U(H_s)$ and $h_s\in V(G)$.
    If $H_s\notin \mathcal{K}_4^+$, assume that $U(H_s)=\{t_s\}$.
    Then $|\{u_s,v_s\}\cap \{h_s,t_s\}|=1$.
    Recalling $U(G_s)=\{h_s\}$ and $U(H_s)=\{t_s\}$,  we have $|U(G)|=|(U(G_s)\cup U(H_s))\cap V(G)|=1$. Therefore, 1) holds.

    As $|V(G_s)|\ge6$, by inductive hypothesis, every edge of $\partial_{G_s}(h_s)$ is removable in $G_s$.
    If $|V(H_s)|\ge6$, then $|\{u_s,v_s\}\cap \{h_s,t_s\}|=1$.
    Assume that $u_s=h_s$ and $v_s\neq t_s$ (the case when {$u_s\neq h_s$ and} $v_s= t_s$ is similar).
    By the proof of 1) of this lemma, $U(G)=\{t_s\}$.
    Then by inductive hypothesis, every edge of $\partial_{H_s}(t_s)$ is removable in $H_s$.
    Therefore, every edge of $\partial_G(t_s)$ is removable in $G$ by Lemma \ref{lem:re_also_re}, and then 2) holds in this case.
    Now assume that $|V(H_s)|=4$, that is, the underlying simple graph of $H_s$ is isomorphic to $K_4$.
    %Note that if $G_s\in \mathcal{K}_4^+\setminus\{K_4\}$, then by the proof of 1) of this lemma, we have $\{v_{s-1},v_s\}\cap \{h_t,h_s',h_s\}=\{v_s\}\subset \{h_s',h_s\}$ or $\{v_{s-1},v_s\}\subset \{h_t,h_s',h_s\}$.

    If $H_s\in \mathcal{K}_4^+$ and $u_s\neq h_s$, then $U(G)=\{h_s\}$ by the proof of 1) of this lemma.
    Note that the corresponding edge (incident with $u_s$) of any nonremovable edge of $H_s$ is not incident with $h_s$  and every edge of $\partial_{G_s}(h_s)$ is removable in $G_s$. Then every edge of $\partial_G(h_s)$ is also removable in $G$ by Lemma \ref{lem:re_also_re}.

    If $H_s\in \mathcal{K}_4^+$ and $u_s= h_s$, then $d_{H_s}(v_s)=d_{G_s}(u_s)\ge4$ and so, $H_s\ncong  K_4$ and $\{u_s,v_{s}\}\subset \{h_s,t_s,t'_s\}$.
    Without loss of generality, assume that $v_s=t_s$. Then $U(G)=\{t'_s\}$.
    By Lemma \ref{lem:re_also_re}, every edge of $\partial_{H_s}(t'_s)$ which is removable in $H_s$ is also  removable in $G$.
    For any nonremovable edge $e$ of $\partial_{H_s}(t'_s)$ in $H_s$, there exists an edge $e'$ which is incident with $v_s$ in  $H_s$, such that $\{e,e'\}$ is a removable doubleton of $H_s$.
    By Lemma \ref{lem:removable_doubleton}, $e$ is removable in $G$. So every edge of $\partial_G(t'_s)$ is also removable in $G$.

    Now assume that $H_s\notin\mathcal{K}_4^+$, that is $|V(H_s)|=4$ and $H_s$ contains multiple edges with exactly one common vertex. Then $U(H_s)=\{t_s\}$. By inductive hypothesis, $|\{u_s,v_s\}\cap \{h_s,t_s\}|=1$.
    If $u_s=h_s$, then $U(G)=\{t_s\}$ by the proof of 1) of this lemma. {By Lemma \ref{lem:removable_doubleton},} every edge of $\partial_G(t_s)$ is removable in $G$.
    If $v_s=t_s$, then $U(G)=\{h_s\}$ by the proof of 1) of this lemma again.
    As $H_s\notin\mathcal{K}_4^+$, $v_s$ is incident with at most one nonremovable edge.
    Let $f$ be the only possible nonremovable edge of $\partial_{H_s}(v_s)$ in $H_s$. Then the corresponding edge of $f$ (incident with $u_s$) in $G_s$ is not incident with $h_s$. So, every edge of $\partial_G(h_s)$ is removable in $G$. Therefore, 2) holds.
    \end{proof}

%\hspace{-0.8cm}
%{\bf Theorem 1.3.} {\em
  %Let $G$ be a wheel-like brick. Then $G\in \mathcal{G}$.}
\noindent{\bf {Proof of Theorem 1.3.}}
%\begin{proof}
    %If $G$ is solid, then by Proposition \ref{pro:solid-wl}, the underlying simple graph of $G$ is an odd wheel (all the multiple edges are incident with its hub). So the result holds.
    If $G$ is solid, then by Proposition \ref{pro:solid-wl},  {$G$ is a wheel-like odd wheel. So $G\in\mathcal{G}_1$, the result holds. }
    Now we assume that $G$ is nonsolid.
    We will prove the result by induction on $|V(G)|$.
    By Lemma \ref{lem:wl-6vtx}, the result holds when $|V(G)|=6$.
    Now we assume that the result holds for every wheel-like brick with at most $n$ vertices ($n$ is even and {$n\ge6$}). In the following we will consider the case when $|V(G)|=n+2$.

    As $G$ is nonsolid, by Corollary \ref{cor:robust-cut-solid-brick}, there exist a subset $X'$ of $\overline{X}$ and a subset $X''$ of $X$ such that $G/(\overline{X'}\rightarrow\overline{x'})$ is a brick, $G/(\overline{X''}\rightarrow\overline{x''})$ is a solid brick, and $(G/({X'}\rightarrow {x'}))/({X''}\rightarrow {x''})$ is a  matching covered bipartite graph such that $x'$ and $x''$ lie in different color classes of $(G/({X'}\rightarrow {x'}))/({X''}\rightarrow {x''})$.
    Let $G'=G/\overline{X'}$, $G''=G/\overline{X''}$ and  $H=(G/({X'}\rightarrow {x'}))/({X''}\rightarrow {x''})$.
    Then every edge of $\partial_H({x'})$ and $\partial_H({x''})$ is removable in $H$ by Lemma \ref{bipar-H-nonre}.

    \begin{cla}
        $G'$ and $G''$ are wheel-likes.
    \end{cla}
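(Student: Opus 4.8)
The plan is to show that both $\partial(X')$-contraction $G'$ and $\partial(X'')$-contraction $G''$ inherit the wheel-like property from $G$, using the structure of $H$. First I would recall the setup: we have two robust cuts $\partial(X')$ and $\partial(X'')$ whose associated bipartite contraction is $H$, with $x'$ and $x''$ in different color classes of $H$, and by Lemma \ref{bipar-H-nonre} every edge incident with $x'$ (resp. $x''$) is removable in $H$. Observe that $G/\overline{X'} = G'$ can be recovered as $(G/X'')$ spliced with the appropriate contraction, and more usefully $G/\overline{X'}$ is obtained by splicing $G'$-side data along $\partial(X')$; the key is that $H = (G/X')/X''$ is bipartite. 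The idea is that any removable class $R'$ of $G'$ (which is a $\partial(X'')$-contraction-type object, i.e. $G' = G/\overline{X'}$) must, via the splicing $G/\overline{X''}$-viewpoint and Lemma \ref{lem:re_also_re} together with Lemma \ref{lem:removable_doubleton}, give rise to a removable class of $G$ or interact with the hub; conversely a hub of $G$ restricts to a hub of each contraction.

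The key steps, in order: (1) Fix $h$ a hub of $G$ and determine where $h$ lies — either $h \in \overline{X'}$ (so $h$ is identified with $\overline{x'}$ in $G'$, or $h$ survives in $G'$) and similarly for $G''$; handle by symmetry, assuming $h \in X'$ say, so $h$ is a genuine vertex of $G'$ (recalling $G' = G/\overline{X'}$). (2) Take a removable class $R'$ of $G'$. Since $\partial(X')$ need not be tight, I would pass to the bipartite graph $H$: using Lemma \ref{bipar-H-nonre} every edge at $x'$ is removable in $H$, and $H$ is a $\partial$-contraction iterate, so removable classes of $G'$ not meeting $\partial(X')$ lift to removable classes of $G$ by Lemma \ref{lem:re_also_re}; those meeting $\partial(X')$ require Lemma \ref{lem:removable_doubleton} (with $H$ playing the role of the brick and the edge of $R' \cap C$ being removable in the other contraction because it survives in $H$ incident to $x'$). (3) Conclude that $R'$ yields a removable class $R$ of $G$; since $G$ is wheel-like, $R$ has an edge at $h$; trace this edge back to see it is (or corresponds to) an edge of $R'$ at $h$ in $G'$ — here I use that the splicing identifies edges of $\partial_G(h)$ with edges of $\partial_{G'}(h)$ when $h$ is a non-splicing vertex. (4) Handle the boundary case where $R'$ is a removable doubleton and only one edge survives, and the case where $h$ coincides with a splicing vertex — in that subcase use Lemma \ref{lem:V(H)_re_edge} or Lemma \ref{lem:at_least_one_wheel-like}(1) to locate the hub.

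I expect the main obstacle to be step (2)–(3): controlling how a removable class of the contraction $G'$ lifts to a removable class of $G$ when the robust cut is not tight. Lemma \ref{lem:re_also_re} only gives the clean "iff" for tight cuts, so for robust (non-tight) cuts I must carefully route every removable edge/doubleton of $G'$ through the bipartite intermediate $H$ and invoke Lemma \ref{lem:removable_doubleton}, checking in each case that the edge of $R' \cap \partial(X')$ is indeed removable in $G/X'$ — which should follow because that edge is incident with $x'$ in $H$ and hence removable there, and $H$ is itself a further contraction of $G/X'$. A secondary subtlety is bookkeeping the location of the hub relative to $X', X'', \overline{X'}, \overline{X''}$ and the splicing vertices; I would organize this into two or three cases by whether $h \in X'$, $h \in X'' \setminus$ (splicing vertex), or $h$ is a splicing vertex, and dispatch the last using the already-established Lemmas \ref{lem:at_least_one_wheel-like} and \ref{lem:V(H)_re_edge}. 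The symmetric statement for $G''$ (which is moreover solid) should then follow by the same argument with the roles of $X'$ and $X''$ swapped, using Proposition \ref{pro:solid-MC} only insofar as solidity is not actually needed for the wheel-like conclusion here.
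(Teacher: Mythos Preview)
Your approach is genuinely different from the paper's, and it has a real gap in step~(2). You try to lift \emph{every} removable class $R'$ of $G'$ to a removable edge of $G$ so that the hub $h$ of $G$ pins down a hub of $G'$. The lift is fine when $R'\cap\partial(X')=\emptyset$: then $R'\subset E(G[X'])$, and Lemmas~\ref{lem:re_also_re} and~\ref{lem:removable_doubleton} give a removable edge of $G$ inside $X'$. But when $R'$ meets $\partial(X')$ your justification breaks down. You invoke Lemma~\ref{lem:removable_doubleton} ``with $H$ playing the role of the brick''; however $H$ is bipartite, not a brick, and in any case what you actually need (for either a removable edge or the doubleton hypothesis) is that the edge of $R'\cap\partial(X')$ be removable in $G/X'$. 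Removability in $H$ does not give this: $H$ is a further contraction of $G/X'$ along $\partial(X'')$, and that cut need not be tight in $G/X'$, so Lemma~\ref{lem:re_also_re} does not transfer removability back up. Without this lift you cannot force such an $R'$ to touch $h$, and your case analysis on the location of $h$ never gets off the ground. (Separately, Lemma~\ref{lem:at_least_one_wheel-like} does not apply directly either, since $G/X'$ is not a brick.)

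The paper sidesteps all of this by never fixing $h$. It only uses the \emph{easy} lifting direction --- classes avoiding the cut --- and argues by contradiction: if $G'$ had a removable class $R_1$ missing $\partial(\overline{x'})$ and $G''$ had one $R_2$ missing $\partial(\overline{x''})$, each would yield a removable edge of $G$ entirely inside $X'$ (resp.\ $X''$), hence two nonadjacent removable edges, contradicting wheel-likeness. Thus at least one of $G',G''$ is wheel-like \emph{with the contraction vertex as hub}. That extra information (hub $=\overline{x'}$) is then fed into the inductive hypothesis and Lemma~\ref{lem:V(H)_re_edge} to force the other contraction to be wheel-like as well (or to collapse $H$ to $K_2$ and invoke Lemma~\ref{lem:at_least_one_wheel-like}). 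Your route, even if the lifting gap were patched, would identify the hub of $G'$ with the image of $h$ rather than with $\overline{x'}$, and you would still need a separate argument to recover the hub-at-contraction-vertex conclusion that the rest of the proof relies on.
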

    \begin{proof}
    By Theorem \ref{thm:re_in_brick}, $G'$ and $G''$ contain at least three removable classes, respectively.
    Suppose that there exist removable classes $R_1$ and $R_2$ in $G'$ and $G''$ respectively, such that $R_1\cap\partial(\overline{x'})=\emptyset$ and $R_2\cap\partial(\overline{x''})=\emptyset$.
    Then there exist two edges, $e_1$ and $e_2$, such that $e_1\in R_1$, $e_2\in R_2$ and both of $e_1$ and $e_2$ are removable in $G$ by Lemmas \ref{lem:re_also_re} and \ref{lem:removable_doubleton}.
    Note that $e_1$ and $e_2$ are nonadjacent, contradicting the assumption that $G$ is wheel-like.
    So at least one of $G'$ and $G''$ is wheel-like such that $\overline{x'}$ or $\overline{x''}$ is its hub.

    Without loss of generality, assume that $G'$ is wheel-like with $\overline{x'}$ as its hub.
    {By inductive hypothesis, $G'\in\mathcal{G}$ and then, every edge incident with $\overline{x'}$ belongs to some removable class (more exactly, every edge of $\partial(\overline{x'})$ is removable unless the underlying simple graph of $G'$ is isomorphic to $K_4$ by Lemma \ref{lem:wheel-like}).}
    Note that every edge incident with $x'$ is removable in $H$.
    %Suppose that $G''$ is not wheel-like.  Then there exists a removable class $R_3$ in $G''$ such that $R_3\cap \partial(\overline{x''})=\emptyset$.
    If every removable class of $G''$ contains an edge incident with $\partial(\overline{x''})$, then $G''$ is wheel-like obviously.
    So we assume that there exists a removable class $R_3$ in $G''$ such that $R_3\cap \partial(\overline{x''})=\emptyset$. $|R_3|$ may be 1 or 2. In each case,
    by Lemmas \ref{lem:re_also_re} and \ref{lem:removable_doubleton}, $R_3$ contains a removable edge $e_0$ in $G$.
    Suppose that $|N_H(x')|\ge2$. Then there exists a removable edge $e$ in $G$ such that both ends of $e$ belong to $X'\cup N(X')\setminus{X''}$ by Lemma \ref{lem:V(H)_re_edge}.
    So $e$ and $e_0$ are two nonadjacent removable edges in $G$, contradicting the assumption that $G$ is wheel-like.
    Therefore, we have $|N_H(x')|=1$. {It means that $H$ is not 2-connected. Then the underlying simple graph of $H$ is $K_2$, as $H$ is matching covered (hence $H$ is connected).
    So $V(H)=\{x',x''\}$.}
    So $G$ is isomorphic $G'(\overline{x'})\odot G''(\overline{x''})$.
    By Lemma \ref{lem:at_least_one_wheel-like}, $G''$ is wheel-like since $G$ is wheel-like.
    \end{proof}

     By inductive hypothesis and Claim 1, $G',G''\in\mathcal{G}$.
    Let $h'$ and $h''$ be hubs of $G'$ and $G''$, respectively.
    By the proof of Claim 1, without loss of generality, assume that $\overline{x'}=h'$.
    Then we have the following claim.

    \begin{cla}
        $V(H)=\{x',x''\}$.
    \end{cla}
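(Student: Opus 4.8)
The plan is to show $V(H)=\{x',x''\}$, i.e.\ that $|N_H(x')|=1$ in the remaining case. In the proof of Claim 1 the analysis split into two cases: either the hub $h'=\overline{x'}$ of $G'$ already forces $|N_H(x')|=1$ (which gives $V(H)=\{x',x''\}$ immediately), or there is a removable class $R_3$ of $G''$ with $R_3\cap\partial(\overline{x''})=\emptyset$, and in the latter case Lemma~\ref{lem:V(H)_re_edge} was invoked precisely to derive $|N_H(x')|=1$. So the first step is to recall that, by Claim~1 and the inductive hypothesis, $G'\in\mathcal{G}$ with hub $\overline{x'}=h'$, hence by Lemma~\ref{lem:wheel-like}(2) every edge of $\partial_{G'}(\overline{x'})$ belongs to some removable class of $G'$ (and is in fact removable unless the underlying simple graph of $G'$ is $K_4$); and every edge of $\partial_H(x')$ is removable in $H$ by Lemma~\ref{bipar-H-nonre}.

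Next I would argue by contradiction: suppose $|N_H(x')|\ge 2$. Since $G''$ is wheel-like (Claim~1) with hub $h''$, either every removable class of $G''$ meets $\partial(h'')$, or there is a removable class $R_3$ of $G''$ disjoint from $\partial(\overline{x''})$. In the first subcase I still need to produce two nonadjacent removable edges of $G$: here I use Theorem~\ref{thm:re_in_brick}, which gives $G''$ at least $\Delta(G'')\ge 3$ removable classes, so two of them are disjoint from each other and at least one can be chosen disjoint from $\partial(\overline{x''})$ as well — this feeds into the argument exactly as $R_3$ does. In the subcase where $R_3$ exists (with $|R_3|\in\{1,2\}$), Lemmas~\ref{lem:re_also_re} and~\ref{lem:removable_doubleton} yield a removable edge $e_0$ of $G$ all of whose ends lie in $X''\cup N(X'')\setminus X'$, while Lemma~\ref{lem:V(H)_re_edge} (applied with $G/(X'\to x')=G'$ wheel-like with hub $\overline{x'}=x'$, every edge of $\partial_{G'}(x')$ in a removable class, and $|N_H(x')|\ge 2$) yields a removable edge $e$ of $G$ with both ends in $X'\cup N(X')\setminus X''$. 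These two edges are nonadjacent, contradicting that $G$ is wheel-like.

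Hence $|N_H(x')|=1$. Since $H$ is matching covered it is connected, and a connected matching covered graph with a vertex of degree $1$ must be $K_2$ (any matching covered graph on $\ge 4$ vertices is $2$-connected by \cite{LP86}, so degree $1$ forces $|V(H)|=2$); therefore the underlying simple graph of $H$ is $K_2$ and $V(H)=\{x',x''\}$, as claimed.

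The main obstacle is the bookkeeping needed to legitimately apply Lemma~\ref{lem:V(H)_re_edge}: one must be sure that $G/(X'\to x')$ is exactly $G'$ and that the roles of $X'$, $X''$ and the two contracted vertices $x'$, $x''$ match the hypotheses of that lemma (the lemma is stated for cuts $\partial(X),\partial(Y)$ with $G/X$, $G/Y$ bricks and $(G/\overline{X})/\overline{Y}$ bipartite, whereas Corollary~\ref{cor:robust-cut-solid-brick} is phrased with $X'\subseteq\overline{X}$, $X''\subseteq X$). Once the correspondence of notation is pinned down, the two-nonadjacent-removable-edges contradiction is immediate; the only mild care beyond that is handling the case where the underlying simple graph of $G'$ is $K_4$, where ``removable'' must be replaced by ``lies in a removable class,'' which is precisely why Lemmas~\ref{lem:removable_doubleton} and~\ref{lem:V(H)_re_edge} are stated in terms of removable classes rather than removable edges.
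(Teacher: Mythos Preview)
Your overall strategy (assume $|V(H)|\ge 4$, produce two nonadjacent removable edges of $G$, contradict wheel-likeness) matches the paper, but your case split on $G''$ is broken and leaves a genuine gap.

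As written, your dichotomy ``either every removable class of $G''$ meets $\partial(h'')$, or there is a removable class $R_3$ disjoint from $\partial(\overline{x''})$'' is not a dichotomy at all: since $G''$ is wheel-like with hub $h''$, the first alternative is \emph{always} true. Even under the charitable reading that you meant $\partial(\overline{x''})$ in both places, your handling of the ``first subcase'' fails. You invoke Theorem~\ref{thm:re_in_brick} to say that among $\ge\Delta(G'')\ge 3$ removable classes one can be chosen disjoint from $\partial(\overline{x''})$. But if $\overline{x''}=h''$, then \emph{every} removable class meets $\partial(h'')=\partial(\overline{x''})$; for instance, take $G''=W_5$ with $\overline{x''}$ the hub---all five removable edges are spokes, and none is disjoint from $\partial(\overline{x''})$. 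So in this subcase your argument produces no $R_3$, and you never obtain the second removable edge needed for the contradiction.

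The paper resolves this by splitting on whether $\overline{x''}=h''$. If $\overline{x''}=h''$, then (by the inductive hypothesis and Lemma~\ref{lem:wheel-like}) every edge of $\partial_{G''}(\overline{x''})$ lies in some removable class of $G''$, and since $|V(H)|\ge 4$ forces $|N_H(x'')|\ge 2$, Lemma~\ref{lem:V(H)_re_edge} applies symmetrically to $G''$ to yield a removable edge $e_2$ of $G$ with both ends in $X''\cup N(X'')\setminus X'$. If $\overline{x''}\neq h''$, pick $e\in\partial_{G''}(h'')\setminus\partial_{G''}(\overline{x''})$ (nonempty because $G''$ is a brick, hence simple apart from multiple edges at $h''$); this $e$ lies in a removable class of $G''$ disjoint from $\partial(\overline{x''})$, and Lemmas~\ref{lem:re_also_re}/\ref{lem:removable_doubleton} give a removable edge $e_3$ of $G$ with both ends in $X''$. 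Either way you get the partner for $e_1$. Your proof needs exactly this repair in place of the Theorem~\ref{thm:re_in_brick} appeal.
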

    \begin{proof}
        Suppose to the contrary that $|V(H)|\ge4$ (note that $H$ is bipartite and matching covered).
        Then $|N_H(x')|\ge2$ as $H$ is 2-connected.
        Note that $\overline{x'}=h'$ and every edge of $\partial_{G'}(h')$ belongs to some removable class of $G'$ (in fact, every edge of $\partial_{G'}(h')$ is removable in $G'$ unless the underlying simple graph of $G'$ is isomorphic to $K_4$ by Lemma \ref{lem:wheel-like}). Then by Lemma \ref{lem:V(H)_re_edge}, there exists a removable edge $e_1$ of $G$, such that both ends of $e_1$ belong to $X'\cup N(X')\setminus X''$.

        If $\overline{x''}=h''$, then by Lemma \ref{lem:V(H)_re_edge} again, there exists a  removable edge  $e_2$ of $G$, such that both ends of $e_2$ belong to $X''\cup N(X'')\setminus X'$, as $|N_H(x'')|\ge2$ and every edge of $\partial_{G''}(h'')$ belongs to some removable class of $G''$.
        Then $e_1$ and $e_2$ are two nonadjacent removable edges of $G$, contradicting the assumption that $G$ is wheel-like.
        So we assume that $\overline{x''}\neq h''$. As $G''$ is a brick, we have $\partial_{G''}(h'')\setminus\partial_{G''}(\overline{x''})\neq\emptyset$. Let $e\in \partial_{G''}(h'')\setminus\partial_{G''}(\overline{x''})$.
        Note that $e$ belongs to some removable class of $G''$ (as $e\in \partial_{G''}(h'')$) and $e\notin \partial_{G''}(\overline{x''})$.
        If $e$ is a removable edge in $G''$, then $e$ is also a removable edge in $G$ by Lemma \ref{lem:re_also_re}; and if $e$ lies in a removable doubleton of $G''$, then one edge in this removable doubleton is removable  in $G$ by Lemma \ref{lem:removable_doubleton}.
        So there exists a removable edge $e_3$ of $G$ (it is possible that $e_3=e$), such that both ends of $e_3$ belong to $X''$.
        Hence, $e_1$ and $e_3$ are two nonadjacent removable edges in $G$. This is a contradiction.
        Therefore, we have $|V(H)|=2$, that is, $V(H)=\{x',x''\}$.  %{that is, $H$ is not 2-connected. It follows that the underlying simple graph of $H$ is $K_2$, as $H$ is matching covered (hence $H$ is connected). So $V(H)=\{x',x''\}$.}
    \end{proof}

    By Claim 2, we have $G=G'(\overline{x'})\odot G''(\overline{x''})$.
      As $|V(G)|>6$, at least one of $|V(G')|$ and $|V(G'')|$ is at least 6.
    Moreover, we have the following claim.

\begin{cla}
Assume that $G'$ has at least 6 vertices and the underlying simple graph of $G''$ is isomorphic to $K_4$.
Then  the following statements hold.

{\rm 1)}
If $G''\cong K_4$, then $\overline{x'}\neq h'$; if $G''\in\mathcal{K}_4^+\setminus\{K_4\}$, then $\overline{x''}\in U(G'')$; if $G''\notin\mathcal{K}_4^+$, then $|\{\overline{x'},\overline{x''}\}\cap \{h',h''\}|=1$.

{\rm 2)} If $\overline{x'}\neq h'$, then for any nonremovable edge $e$ of $\partial(\overline{x''})$ in $G''$, the corresponding edge of $e$ (incident with $\overline{x'}$) in $G'$ is not incident with $h'$.
\end{cla}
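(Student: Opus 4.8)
The proof runs by contradiction, using repeatedly that a wheel-like graph cannot contain two vertex-disjoint removable edges (both would have to be incident with a common hub). The splicing cut $C$ with $\{G/X,G/\overline{X}\}=\{G',G''\}$ is separating but not tight: its two contractions are the bricks $G'$ and $G''$, yet $C$ is nontrivial while $G$, being a brick, has only trivial tight cuts. Hence, by Lemma \ref{lem:re_also_re} together with the convention that an edge absent from a graph is removable in it, an edge of $G'$ (resp. $G''$) that is removable in $G'$ (resp. $G''$) and not incident with the splicing vertex is removable in $G$; and by Lemma \ref{lem:removable_doubleton} a removable doubleton of $G''$ whose boundary edge (the edge at $\overline{x''}$) corresponds to a removable edge of $G'$ produces a removable edge of $G$ lying in the $G''$-part. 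Since $|V(G')|\ge6$ we have $G'\neq K_4$, so $\Delta(G')=d_{G'}(h')\ge4$, $U(G')=\{h'\}$, and every edge of $\partial_{G'}(h')$ is removable in $G'$ by Lemma \ref{lem:wheel-like}; as $d_{G'}(\overline{x'})=d_{G''}(\overline{x''})$ this yields the key equivalence $\overline{x'}=h'$ if and only if $d_{G''}(\overline{x''})=\Delta(G')$.

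For 1), if $G''\cong K_4$ then every vertex of $G''$ has degree $3$, so $d_{G''}(\overline{x''})=3<\Delta(G')$ and the equivalence gives $\overline{x'}\neq h'$. If $G''\in\mathcal{K}_4^+\setminus\{K_4\}$, suppose $\overline{x''}\notin U(G'')$; then $\overline{x''}$ has degree $3$, so $\overline{x'}\neq h'$, and $h'$ (being internal and of degree $\ge4$ in the $3$-connected graph $G'$) carries an internal removable edge, which is removable in $G$; meanwhile the multiple edge of $G''$ joins its two hubs, both distinct from $\overline{x''}$, so a copy of it is an internal removable edge of $G''$, hence of $G$. These two edges are vertex-disjoint, a contradiction; thus $\overline{x''}\in U(G'')$. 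Finally let $G''\notin\mathcal{K}_4^+$; by Proposition \ref{pro:mutiedges-add} its multiple edges lie at its hub $h''$ and reach at least two distinct rim vertices, and $U(G'')=\{h''\}$. Because $\overline{x'},h'$ lie in the $G'$-part and $\overline{x''},h''$ in the $G''$-part, the required $|\{\overline{x'},\overline{x''}\}\cap\{h',h''\}|=1$ means exactly one of $\overline{x'}=h'$, $\overline{x''}=h''$ holds. If neither holds, an internal removable edge at $h'$ (from $G'$) and a copy of a multiple edge $h''r$ with $r\neq\overline{x''}$ (from $G''$) are vertex-disjoint removable edges of $G$, a contradiction.

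The remaining, and hardest, sub-case of 1) is when both $\overline{x'}=h'$ and $\overline{x''}=h''$, i.e.\ the splicing is done at the two hubs; now all removable edges of $G'$ and of $G''$ sit at the splicing vertices, so I cannot use purely internal edges. I resolve it with the boundary (cross) edges: since $G''\notin\mathcal{K}_4^+$ has multiple edges to two distinct rim vertices $r_1,r_2$, copies of $h''r_1$ and $h''r_2$ are removable in $G''$, their partners under the splicing bijection are edges at $\overline{x'}=h'$ and so removable in $G'$, and therefore both corresponding cross edges are removable in $G$. As $G'$ is $3$-connected, $h'$ has at least three neighbours, which (since $d_{G'}(h')=\Delta(G')$) lets us choose two such cross edges, one to $r_1$ and one to $r_2$, meeting distinct vertices of the $G'$-part; with $r_1\neq r_2$ these are vertex-disjoint removable edges of $G$, contradicting that $G$ is wheel-like. (When $G'$ is itself an odd wheel this is precisely the failure of part 1) of Lemma \ref{Wi_Wj}.) Hence exactly one equality holds, proving 1).

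For 2) I argue contrapositively. Assume $\overline{x'}\neq h'$ and let $e\in\partial_{G''}(\overline{x''})$ be nonremovable in $G''$ with its partner $e'$ at $\overline{x'}$ incident with $h'$. By 1), $\overline{x''}$ is a hub of $G''$ (every vertex is a hub when $G''\cong K_4$), and a short verification for $K_4$, for $\mathcal{K}_4^+$ and for the odd-wheel case shows that each nonremovable edge at such a vertex lies in a removable doubleton $\{e,f\}$ of $G''$ with $f$ internal. Since $e'$ is an edge at $h'$ and $|V(G')|\ge6$, $e'$ is removable in $G'$; as $e$ is the boundary edge of the doubleton, Lemma \ref{lem:removable_doubleton} makes $f$, which lies in the $G''$-part, removable in $G$. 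On the other hand $\overline{x'}\neq h'$ and the $3$-connectivity of $G'$ give an internal removable edge at $h'$ in the $G'$-part, removable in $G$. These two edges are vertex-disjoint, contradicting that $G$ is wheel-like; hence $e'$ is not incident with $h'$, as claimed.
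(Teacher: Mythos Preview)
Your proof is correct and follows the same overall strategy as the paper: in each sub-case you derive a contradiction by exhibiting two nonadjacent removable edges of $G$, which is impossible in a wheel-like brick. The mechanics (degree comparison for the $K_4$ case, Lemma~\ref{lem:re_also_re} and Lemma~\ref{lem:removable_doubleton} to lift removability across the separating cut, and the observation that $|V(G')|\ge 6$ forces every edge at $h'$ to be removable via Lemma~\ref{lem:wheel-like}) are the same as in the paper.

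There are two differences worth noting. First, for the ``at least one'' direction in the $\mathcal{K}_4^+\setminus\{K_4\}$ and $G''\notin\mathcal{K}_4^+$ cases, the paper simply invokes Lemma~\ref{lem:at_least_one_wheel-like} to get $|\{\overline{x'},\overline{x''}\}\cap(\{h'\}\cup U(G''))|\ge 1$ and then combines this with the degree constraint; you instead construct the contradiction directly by producing an internal removable edge on each side. Your route is more self-contained but slightly longer. Second, in part~2) you first apply part~1) to pin down that $\overline{x''}$ is a hub of $G''$, which lets you assert cleanly that the doubleton partner $f$ is internal; the paper argues directly from the underlying $K_4$ and splits into the dichotomy ``either $st$ is a multiple edge or $\{e,st\}$ remains a removable doubleton of $G''$''. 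Both work.

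One small imprecision in your ``both hubs'' sub-case: you fix two specific rim vertices $r_1,r_2$ and assert that one can choose cross edges to $r_1$ and to $r_2$ with distinct $G'$-ends, citing only that $h'$ has at least three neighbours. If $h''$ has multiple edges to \emph{all three} rim vertices and the bijection happens to send all $h''r_1$- and $h''r_2$-partners to a single neighbour of $h'$, this particular choice fails. The conclusion is still correct (a short matching argument on the bipartite graph of removable cross edges, using that both its sides have at least two incident vertices, yields two disjoint ones), and the paper is equally terse at this point; but you may wish to phrase the choice over \emph{all} removable cross edges rather than just those to $r_1$ and $r_2$.
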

\begin{proof}
     By Claim 1, $G'$ and $G''$ are wheel-like bricks.
    If $G''\cong K_4$, then $\overline{x'}\neq h'$, as $d_{G'}(h')\ge 4$ and every vertex of $V(G'')$ is of degree 3.
    Now assume that $G''\ncong K_4$.
    As $G$ is wheel-like,  by Lemma \ref{lem:at_least_one_wheel-like}, $|\{\overline{x'},\overline{x''}\}\cap (\{h'\}\cup U'')|\ge1$.
    If $G''\in \mathcal{K}_4^+\setminus \{K_4\}$, then $\overline{x''}\in U(G'')$, as $d_{G'}(h')\ge4$ and every vertex of $V(G'')\setminus U(G'')$ is of degree 3 in $G''$.
    If $G''\notin \mathcal{K}_4^+$, then $U(G'')=\{h''\}$ and $h''$ is incident with at least two removable edges (with precisely one common vertex $h''$) of $G''$.
    Note that $V(G')\setminus\overline{x'}=X$ and $V(G'')\setminus\{\overline{x''}\}=\overline{X}$.
    If $\overline{x'}=h'$ and $\overline{x''}=h''$, then by Lemma \ref{lem:re_also_re}, there exist two nonadjacent removable edges of $G$ lying in $\partial_G(X)$, as every edge of $\partial(h')$ is removable in $G'$ (by inductive hypothesis and Lemma \ref{lem:wheel-like}) and $\partial_{G}(X)$ is a robust cut of $G$.  It contradicts the assumption that $G$ is wheel-like.
    Therefore, $|\{\overline{x'},\overline{x''}\}\cap \{h',h''\}|=1$.

    Suppose, to the contrary, that there exists a nonremovable edge $e$ of $\partial(\overline{x''})$ in $G''$, and the corresponding edge of $e$ (incident with $\overline{x'}$) in $G'$, say $e'$, is incident with $h'$.
    As the underlying simple graph of $G''$ is isomorphic to $K_4$, there exist two vertices $s$ and $t$ in $V(G'')\setminus\{\overline{x''}\}$, such that $\{e,st\}$ is a removable doubleton of {the underlying simple graph of} $G''$. {Then either $|E[\{s\},\{t\}]|\ge2$ in $G''$ or $\{e,st\}$ is a removable doubleton of $G''$.}
    Note that every edge of $\partial(h')$ is removable in $G'$. {By Lemmas  \ref{lem:re_also_re} and \ref{lem:removable_doubleton},}  $st$ is removable in $G$ {which is not incident with $h'$.}
    %On the other hand, every removable edge of $\partial(h')\setminus\partial(\overline{x'})$ is also removable in $G$ by Lemma \ref{lem:re_also_re}.
    {On the other hand, as every edge of $\partial (h')$ is removable in $G'$, every edge of $\partial (h')\setminus \partial (\overline{x'})$ is removable in $G$ by Lemma \ref{lem:re_also_re}.  As $\partial (h')\setminus \partial (\overline{x'})$ contains at least two adjacent edges (with one common vertex $h'$) and $G$ is wheel-like, every removable of $G$ is incident with $h'$. This is a contradiction. So 2) holds.}
\end{proof}

    By Claim 3, the result holds if the underlying simple graph of $G'$ or $G''$ is isomorphic to $K_4$.
    So assume that $|V(G')|\ge6$ and $|V(G'')|\ge6$.
    As $G',G''\in\mathcal{G}$ (by inductive hypothesis), each of $G'$ and $G''$ has exactly one hub by 1) of Lemma \ref{lem:wheel-like}.
    As $G$ is wheel-like and both of $G'$ and $G''$ are bricks, we have $|\{\overline{x'},\overline{x''}\}\cap \{h',h''\}|\ge1$ by Lemma \ref{lem:at_least_one_wheel-like}.
    Suppose to the contrary that $|\{\overline{x'},\overline{x''}\}\cap \{h',h''\}|=2$.
    By 2) of Lemma \ref{lem:wheel-like}, every edge of $\partial_{G'}(\overline{x'})$ and $\partial_{G''}(\overline{x''})$ is removable in $G'$ and $G''$, respectively.
    Then by Lemma \ref{lem:re_also_re}, every edge of $\partial_{G}(V(G')\setminus\{\overline{x'}\})$ is removable in $G$.
    As $G$ is a brick and $\partial_G(V(G')\setminus\{\overline{x'}\})$ is a nontrivial edge cut of $G$, there exist at least two nonadjacent edges of $\partial_G(V(G')\setminus\{\overline{x'}\})$, contradicting the assumption that $G$ is wheel-like.
    Therefore, $|\{\overline{x'},\overline{x''}\}\cap \{h',h''\}|=1$.
    As $G''$ is a solid brick, by Claim 1 and Proposition \ref{pro:solid-wl}, $G''$ is a wheel-like odd wheel, that is, $G''\in \mathcal{G}_1$. Therefore, $G\in\mathcal{G}$. $\hfill\square$
%\end{proof}

It should be noted that not every graph in $\mathcal{G}$ is wheel-like.  Lemma \ref{Wi_Wj}
will help to determine when the splicing of two odd wheels is wheel-like. When $n>2$, some edge not {incident with} the hub of $\mathcal{G}_n$ will be removable, even if the splicing between any two odd wheels satisfying the condition in  Lemma \ref{Wi_Wj} (see Figure \ref{fig:counter-exp} for example).
\begin{figure}[h]
  \centering
  \includegraphics[width=0.4\textwidth]{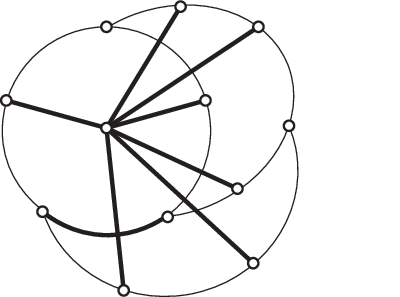}
  \caption{A brick in $\mathcal{G}_3$  which is not wheel-like (the bold edges are removable).}\label{fig:counter-exp}
\end{figure}
\begin{pro}\label{pro:brick-cubic-re}
    Let $G$ be a brick such that every removable edge of it is incident with a vertex $h$. Then every edge of $\partial(h)$ is removable or there exists a  vertex $u\in V(G)\setminus\{h\}$ such that $d_{G}(u)=3$.
        \begin{proof}
      If $G$ has a removable doubleton, then the underlying simple graph $H$ of $G$ is isomorphic to $K_4$, {the triangular prism} or $R_8$ by Theorem \ref{thm:sim-near-bi-brick}.
      If $G=H$, then the result holds as every vertex of $K_4$, {the triangular prism} and $R_8$ is of degree 3.
      Assume that $G$ has a multiple edge. Then every multiple edge of $G$ is incident with $h$ and so, $d_{G}(h)\ge4$.
     It can be checked that the result holds when $H\cong K_4$.
    {Assume that $H$ is the triangular prism} or $H\cong R_8$.
      {As $V(G)\setminus(\{h\}\cup N_G(h))\neq\emptyset$, we may assume that $v\in  V(G)\setminus(\{h\}\cup N_G(h))$. As every multiple edge of $G$ belongs to $E[\{h\}, N_G(h)]$ and every vertex of $H$ has degree 3, we have $d_G(v)=3$.}

      If $G$ has no removable doubletons, then $G$ is wheel-like. Note that  every edge incident with the hub in  an odd wheel with at least 6 vertices is removable.
      Then the result holds by Lemma \ref{lem:simple-nbb-wl-K4}, Lemma \ref{lem:wheel-like} and Theorem \ref{thm:wheel-like}.
      \end{proof}
\end{pro}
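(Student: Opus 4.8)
The plan is to argue by a dichotomy on whether $G$ has a removable doubleton, after a preliminary reduction to the underlying simple graph $H$ of $G$. First I would record that in any matching covered graph an edge $e$ that is parallel to another edge $e'$ is removable: from a perfect matching of $G$ through a prescribed edge $f\neq e$ one obtains a perfect matching of $G-e$ through $f$ by replacing $e$ with $e'$ whenever $e$ is used, and $G-e$ stays connected and nontrivial. Since by hypothesis every removable edge of $G$ is incident with $h$, this forces every multiple edge of $G$ to be incident with $h$; in particular $d_H(v)=d_G(v)$ for every $v\neq h$. As deleting parallel edges preserves $3$-connectedness and bicriticality (and the vertex count), $H$ is again a brick, and a short perfect-matching manipulation shows that an edge of $H$ not incident with $h$ is removable in $H$ if and only if it is removable in $G$.

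Suppose first that $G$ has a removable doubleton $\{e,f\}$. Since $e$ and $f$ are non-removable they are not multiple edges, so $e,f\in E(H)$; one then checks that $\{e,f\}$ is a removable doubleton of $H$ as well --- deleting from the bipartite matching covered graph $G-e-f$ its (removable) multiple edges leaves $H-e-f$ bipartite and matching covered, and a witness that $G-e$ or $G-f$ fails to be matching covered can always be chosen among the edges of $H$. Thus $H$ is a simple near-bipartite brick, and Theorem \ref{thm:sim-near-bi-brick} gives that either $H$ is one of $K_4$, the triangular prism, $R_8$, or $H$ has two nonadjacent removable edges. The last case is impossible: two nonadjacent edges cannot share the vertex $h$, so one of them would be removable in $H$ and not incident with $h$, hence removable in $G$ and not incident with $h$, contradicting the hypothesis. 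Therefore $H$ is $K_4$, the triangular prism, or $R_8$. If $G=H$, then $G$ is cubic and every $u\in V(G)\setminus\{h\}$ has $d_G(u)=3$. If $G\neq H$, then $G$ has a multiple edge, so $d_G(h)\ge 4$; when $H$ is the triangular prism or $R_8$ we have $|\{h\}\cup N_G(h)|\le 4<|V(G)|$, so some $u\in V(G)\setminus(\{h\}\cup N_G(h))$ exists, and since every multiple edge lies in $E[\{h\},N_G(h)]$ while $H$ is cubic, $d_G(u)=3$; the remaining case $H\cong K_4$ with $G\neq H$ involves only finitely many graphs and is settled by inspection.

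Suppose now that $G$ has no removable doubleton. Then every removable class of $G$ is a single edge incident with $h$, so $G$ is wheel-like with hub $h$, and Theorem \ref{thm:wheel-like} gives $G\in\mathcal G$. If the underlying simple graph of $G$ is $K_4$, then $G\in\mathcal K_4^+$; since $K_4$ itself has a removable doubleton we have $G\neq K_4$, and the statement is verified directly on $\mathcal K_4^+\setminus\{K_4\}$. Otherwise $|V(G)|\ge 6$ (the only simple brick on four vertices being $K_4$), so Lemma \ref{lem:wheel-like} applies: $G$ has a unique maximum-degree vertex $h'$ and every edge of $\partial_G(h')$ is removable. By hypothesis each of these edges is incident with $h$; if $h\neq h'$ this would force every edge of $\partial_G(h')$ to be the edge $hh'$, so that $N_G(h')=\{h\}$ and $h$ is a cut vertex of $G$, contradicting $3$-connectedness. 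Hence $h=h'$ and every edge of $\partial(h)$ is removable.

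I expect the main obstacle to be the bookkeeping in the passage between $G$ and its underlying simple graph $H$ in the doubleton case: one must check that the removable-doubleton hypothesis descends to $H$, that $H$ is still a brick, and that removability transfers between $H$ and $G$ precisely for the edges not incident with $h$ (which is exactly what the contradiction needs). Once these are in place, the remainder is either a one-line perfect-matching argument or a finite verification over the sporadic bricks $K_4$, the triangular prism, $R_8$, and the graphs in $\mathcal K_4^+$.
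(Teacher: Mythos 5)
Your proposal follows essentially the same route as the paper: split on whether $G$ has a removable doubleton, apply Theorem \ref{thm:sim-near-bi-brick} to the underlying simple graph $H$ (using that multiple edges are removable, hence all sit at $h$, so any vertex outside $\{h\}\cup N_G(h)$ keeps degree $3$), and in the doubleton-free case invoke wheel-likeness together with Theorem \ref{thm:wheel-like} and Lemma \ref{lem:wheel-like}. You in fact supply details the paper leaves implicit, notably the transfer of removability between $G$ and $H$ and the argument that the $\mathcal{G}$-hub must coincide with $h$ because otherwise all edges of $\partial(h')$ would join $h'$ to $h$, contradicting $3$-connectedness.

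One slip in your four-vertex, doubleton-free subcase: a graph in $\mathcal{G}$ whose underlying simple graph is $K_4$ need not lie in $\mathcal{K}_4^+$. In fact every member of $\mathcal{K}_4^+\setminus\{K_4\}$ has a removable doubleton (the two edges disjoint from the parallel class), so that family never occurs in this branch, whereas $K_4$ with parallel edges from $h$ to all three of its neighbours is wheel-like, has no removable doubleton, and is not in $\mathcal{K}_4^+$. The subcase remains immediate, just not via $\mathcal{K}_4^+$: every multiple edge is removable and hence incident with $h$, so either some vertex $u\neq h$ satisfies $d_G(u)=3$, or all three edges at $h$ are multiple and then every edge of $\partial(h)$ is removable. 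With this repair your argument is correct and matches the paper's proof.
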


\begin{pro}[\cite{LP86}]\label{pro:2-sep-cut-of-bi}
    {Let $G$ be a bicritical graph and let $C$ be a 2-separation cut of $G$. Then both of the two $C$-contractions of $G$ are bicritical.}
\end{pro}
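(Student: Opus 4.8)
Write $C=\partial(X)$ for the given $2$-separation cut; by definition it comes from a $2$-separation $\{u,v\}$ of $G$ and a partition of the components of $G-\{u,v\}$ into nonempty parts $G_1,G_2$, where (relabelling $u,v$ if necessary) $X=V(G_1)\cup\{u\}$ and $\overline X=V(G_2)\cup\{v\}$; the two $C$-contractions are $H_1:=G/(\overline X\to\overline x)$ and $H_2:=G/(X\to x)$. The first step is to observe that $C$ is a tight cut (every $2$-separation cut is tight, as noted in Section~2), so $H_1$ and $H_2$ are matching covered, and that each component of $G-\{u,v\}$ is even, so $|V(G_1)|,|V(G_2)|\ge 2$ and hence $|V(H_1)|,|V(H_2)|\ge 4$. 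By Proposition~\ref{pro:mc_is_Bi} it then suffices to prove that $H_1$ and $H_2$ have no nontrivial barrier, and as the two sides of $C$ are symmetric I would only argue for $H:=H_1$.

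The core of the proof is to show that a nontrivial barrier of $H$ would produce a nontrivial barrier of $G$, which is impossible since $G$ is bicritical (Proposition~\ref{pro:mc_is_Bi}, using $G\neq K_2$). Two structural facts about the $2$-separation are used throughout: every component of $G-\{u,v\}$ is even, and, since $G$ is $2$-connected (being matching covered on at least four vertices), every component of $G-\{u,v\}$ is adjacent to both $u$ and $v$; hence $G[\overline X]$ is connected through $v$, $G[X]$ is connected through $u$, and $G$ has no edge between $V(G_1)$ and $V(G_2)$. Now suppose $B$ is a barrier of $H$ with $|B|\ge 2$. In the case $\overline x\notin B$, so $B\subseteq X$, I would take $B^{\ast}=B$: since $B$ is disjoint from $\overline X$, deleting $B$ commutes with the contraction, so $H-B=(G-B)/(\overline X\to\overline x)$; as $G[\overline X]$ is connected it lies inside a single component $D$ of $G-B$, and contracting it changes the size of $D$ by the even number $|\overline X|-1=|V(G_2)|$, so the resulting component has the same parity as $D$ and all other components of $G-B$ are unaffected. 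Hence $o(G-B)=o(H-B)=|B|$, so $B$ is a nontrivial barrier of $G$, a contradiction.

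In the case $\overline x\in B$ I would write $B=\{\overline x\}\cup B_X$ with $B_X=B\cap X$ and set $B^{\ast}=B_X\cup\{v\}$, so $|B^{\ast}|=|B|$. Here $H-B=G[X\setminus B_X]$, while $G-B^{\ast}$ consists of $G[X\setminus B_X]$ together with the additional vertex set $V(G_2)$; since $G$ has no edge between $V(G_1)$ and $V(G_2)$, the set $V(G_2)$ is reachable only through $u$, so if $u\notin B_X$ all of $V(G_2)$ is absorbed into the component of $u$, and if $u\in B_X$ then $V(G_2)$ splits off as a union of its own components. In either case $V(G_2)$ is a disjoint union of even components of $G-\{u,v\}$, hence it contributes no odd component and does not change the parity of the component it may join; therefore $o(G-B^{\ast})=o(H-B)=|B|=|B^{\ast}|$, and $B^{\ast}$ is again a nontrivial barrier of $G$, a contradiction. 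It follows that $H$ (and, by the symmetric argument, $H_2$) has only trivial barriers and is therefore bicritical.

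I expect the main obstacle to be the bookkeeping of odd components in the last two cases: one must check precisely that passing between $G$ and a $C$-contraction neither creates nor destroys an odd component and that the set $B^{\ast}$ has exactly the same cardinality as $B$, and this is where both defining features of a $2$-separation --- that every component of $G-\{u,v\}$ is even and that each meets both $u$ and $v$ --- are indispensable. A more computational route, trying to extend a perfect matching of $G-\{s,t\}$ directly to one of $H-\{s,t\}$, looks plausible but is awkward, since such a matching may use several edges of $\partial(X)$; the barrier formulation sidesteps this difficulty.
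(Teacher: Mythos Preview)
The paper does not supply its own proof of this proposition; it is quoted as a known result from \cite{LP86}. Your argument is correct and follows the standard route: using Proposition~\ref{pro:mc_is_Bi} to reduce bicriticality to the absence of nontrivial barriers, and then lifting a hypothetical nontrivial barrier $B$ of a $C$-contraction to a nontrivial barrier $B^{\ast}$ of $G$ (taking $B^{\ast}=B$ when $\overline{x}\notin B$ and $B^{\ast}=(B\setminus\{\overline{x}\})\cup\{v\}$ when $\overline{x}\in B$). The parity bookkeeping is accurate, and you correctly identify the two structural ingredients that make it work --- that each component of $G-\{u,v\}$ is even and that, by $2$-connectivity, each meets both $u$ and $v$.
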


\begin{pro}[\cite{ZWY2022}]\label{pro:WL-bicritical-no-re}
    Let $G$ be a bicritical graph without removable edges. Then $G$ has at least four vertices of degree three. As a consequence, every bicritical graph with minimum degree at least 4 has removable edges.
\end{pro}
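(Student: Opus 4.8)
The plan is to prove the first assertion by strong induction on $|V(G)|$ and to read off the second from it: if $G$ were bicritical with $\delta(G)\ge 4$ and had no removable edge, the first assertion would force at least four vertices of degree $3$, contradicting $\delta(G)\ge 4$, so such a $G$ must have a removable edge. Two preliminary reductions simplify the induction. First, $\delta(G)\ge 3$ always holds for a bicritical graph, since a vertex $w$ with $d(w)=2$ would be isolated in $G-N(w)$, contradicting bicriticality. Second, $G$ may be taken simple, because a multiple edge is always removable. For the base case suppose $G$ is a brick; by Lov\'asz's theorem \cite{lo} every brick other than $K_4$ and the triangular prism has a removable edge, so $G\cong K_4$ or $G$ is the triangular prism, both of which are cubic (on $4$ and $6$ vertices), and hence each has at least four vertices of degree $3$.

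For the inductive step assume $G$ is not a brick. Then $G$ is not $3$-connected, so it has a $2$-cut $\{u,v\}$, which by Corollary \ref{cor:2-vtx_cut_of_Bi} is a $2$-separation whose components split into nonempty even subgraphs $G_1,G_2$. Setting $X=V(G_1)\cup\{u\}$, I would consider the two $\partial(X)$-contractions $G'=G/(\overline{X}\to\overline{x})$ and $G''=G/(X\to x)$; by Proposition \ref{pro:2-sep-cut-of-bi} both are bicritical, and each is smaller than $G$. Here I record two structural facts. First, every original vertex keeps its degree in whichever contraction contains it (contraction only relabels the far endpoints of incident edges), and since $\partial(X)$ is tight, Lemma \ref{lem:re_also_re} shows an original edge is removable in its contraction if and only if it is removable in $G$; as $G$ has no removable edge, every removable edge of $G'$ is incident with $\overline{x}$ and every removable edge of $G''$ is incident with $x$. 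Second, because $G$ is simple, the only multiple edges $G'$ can have join $\overline{x}$ to $u$ (a vertex of $G_1$ other than $u$ meets $\overline{X}$ only at $v$, contributing at most one edge), and symmetrically the multiple edges of $G''$ join $x$ to $v$.

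The intended engine is a strengthened inductive statement: a bicritical graph in which every removable edge is incident with a single vertex $h$ contains at least two vertices of degree $3$ distinct from $h$. Applying this to $G'$ (hub $\overline{x}$) and $G''$ (hub $x$), and noting that their degree-$3$ vertices other than the pseudo-vertices are genuine degree-$3$ vertices of $G$ lying in the disjoint sets $X$ and $\overline{X}$, yields at least $2+2=4$ vertices of degree $3$ in $G$. The anchor for the strengthened statement is the brick case, which is essentially Proposition \ref{pro:brick-cubic-re} together with its refinements: when all removable edges sit at one vertex $h$, the brick either has a removable doubleton, forcing its underlying simple graph to be $K_4$, the triangular prism or $R_8$ by Theorem \ref{thm:sim-near-bi-brick} (two nonadjacent removable edges would escape $h$), or it is wheel-like, hence an odd wheel by Theorem \ref{thm:wheel-like} and Lemma \ref{lem:wheel-like}, whose rim supplies many degree-$3$ vertices off $h$.

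The hard part will be the honest verification of the strengthened statement in the presence of multiple edges at the hub. A single doubled hub-edge inflates the degree of the separator vertex and erodes the count (for instance $K_4$ with one hub-edge doubled leaves exactly two degree-$3$ vertices off the hub, precisely the threshold), and under iterated contraction several hub-neighbours may become multiple, so a careless ``two per side'' bound is fragile and can even collapse. Controlling this demands propagating the second structural fact through the decomposition to bound the multiplicities at each pseudo-vertex, settling the small exceptional bricks $K_4$, the triangular prism and $R_8$ by direct inspection, and invoking Proposition \ref{pro:brick-cubic-re} and the wheel-like classification to certify that a brick whose removable edges are confined to one vertex still contributes the required genuine degree-$3$ vertices. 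This multiplicity bookkeeping, rather than any single structural lemma, is where the real difficulty lies.
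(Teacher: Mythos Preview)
The paper does not prove Proposition~\ref{pro:WL-bicritical-no-re}; it is imported from \cite{ZWY2022} as a black box, so there is no in-paper argument to compare your attempt against. Your proposal must therefore stand on its own, and at present it does not: you correctly identify the crux as the ``strengthened inductive statement'' that a bicritical graph whose removable edges all meet a fixed vertex $h$ has at least two degree-$3$ vertices off $h$, but this statement is \emph{false}. Take $K_4$ on $\{h,a,b,c\}$ with both $ha$ and $hb$ doubled: the only removable edges are the four parallel spokes, all at $h$, yet $d(a)=d(b)=4$ and only $c$ has degree~$3$. Worse, in $W_5$ with every spoke doubled, all removable edges sit at the hub but every rim vertex has degree~$4$, so there are no degree-$3$ vertices whatsoever. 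Your proposed anchor, Proposition~\ref{pro:brick-cubic-re}, is a disjunction (every hub edge is removable \emph{or} some off-hub vertex has degree~$3$), and both examples land in the first branch while providing nothing for your count.

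You are candid that the multiplicity bookkeeping is where the difficulty lies, but candour is not a proof. What is missing is a precise invariant that (i) holds for the two $\partial(X)$-contractions of a simple $G$ with no removable edges, (ii) is preserved under further $2$-separation contractions when the piece is not yet a brick, and (iii) forces at least two genuine degree-$3$ vertices in the terminal brick. Your one-step observation that multiple edges in $G'$ can only join $\overline{x}$ to the single separator vertex $u$ is correct, but after a second contraction of $G'$ you may acquire multiple edges at a new pseudo-vertex while the old ones persist, and neither pseudo-vertex need be the hub of the resulting piece. Until you formulate and verify such an invariant, the induction does not close; as written, this is an outline with an explicit, unfilled gap rather than a proof. (There is no circularity issue, at least: Proposition~\ref{pro:brick-cubic-re} and Theorem~\ref{thm:wheel-like} are established in the paper without appeal to Proposition~\ref{pro:WL-bicritical-no-re}.)
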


\begin{lem}\label{lem:WL-bicritical}
    Let $G$ be a bicritical graph with a removable edge. Assume that every removable edge of $G$ is incident with a vertex $h$. Then every edge of $\partial(h)$ is removable or  there exists a  vertex $s$ in $V(G)\setminus\{h\}$ such that $d(s)=3$.
   \end{lem}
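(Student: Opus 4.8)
The plan is to argue by induction on $|V(G)|$, with Proposition \ref{pro:brick-cubic-re} serving as the brick (hence base) case. As in that proposition, I would prove the disjunction in the form: assuming $d_G(s)\ge 4$ for every $s\in V(G)\setminus\{h\}$ --- which loses nothing, since a bicritical graph has minimum degree at least $3$ --- show that every edge of $\partial(h)$ is removable. If $G$ is $3$-connected then $G$ is a brick, and Proposition \ref{pro:brick-cubic-re} together with the standing assumption finishes it. So suppose $G$ is not $3$-connected; being $2$-connected (it has at least four vertices) it has a $2$-vertex cut, which by Corollary \ref{cor:2-vtx_cut_of_Bi} is a $2$-separation $\{u,v\}$. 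Fix a partition of the components of $G-\{u,v\}$ into nonempty even subgraphs $G_1$ and $G_2$.

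The heart of the induction step is choosing the right $2$-separation cut. The two cuts $\partial(V(G_1)\cup\{u\})$ and $\partial(V(G_1)\cup\{v\})$ have sizes whose sum equals $d_G(u)+d_G(v)$; hence among the two choices of $X\in\{V(G_1)\cup\{u\},\,V(G_1)\cup\{v\},\,V(G_2)\cup\{u\},\,V(G_2)\cup\{v\}\}$ that contain $h$, at least one satisfies $|\partial(X)|\ge 4$ --- indeed, the one or two vertices among $\{u,v\}$ different from $h$ have degree at least $4$ while $\delta(G)\ge 3$, so the relevant sum of two cut sizes is at least $7$. Fix such an $X$, put $H_1=G/(\overline X\to\overline x)$ and $H_2=G/(X\to x)$, and record: $h\in X$; $\partial(X)$ is a tight cut; by Proposition \ref{pro:2-sep-cut-of-bi} both $H_i$ are bicritical with fewer (but at least four) vertices; $d_{H_2}(x)=d_{H_1}(\overline x)=|\partial(X)|\ge 4$; and every vertex of $H_1$ other than $h$, respectively of $H_2$ other than $x$, is a vertex of $G$ distinct from $h$ whose degree is unchanged.

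Next I would apply the induction hypothesis to $H_2$. Every edge of $H_2$ not meeting $x$ lies inside $\overline X$, so it is an edge of $G$ not incident with $h$, hence not removable in $G$ and, since $\partial(X)$ is tight and this edge is absent from $H_1$, not removable in $H_2$ by Lemma \ref{lem:re_also_re}; thus all removable edges of $H_2$ meet $x$. Also $H_2$ has a removable edge, for otherwise Proposition \ref{pro:WL-bicritical-no-re} would force four vertices of $H_2$ of degree $3$ although only $x$ can have degree below $4$. So the induction hypothesis applies to $H_2$ with hub $x$, and since no vertex of $H_2$ other than $x$ has degree $3$ it yields that every edge of $\partial_{H_2}(x)=\partial(X)$ is removable in $H_2$. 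A parallel analysis applies to $H_1$: an edge of $H_1$ inside $X$ is removable in $H_1$ iff removable in $G$ (Lemma \ref{lem:re_also_re}), hence incident with $h$; and an edge of $H_1$ meeting $\overline x$ that is removable in $H_1$ is, by the preceding sentence, removable in both contractions, hence removable in $G$, hence of the form $h\overline x$. So every removable edge of $H_1$ meets $h$, and $H_1$ inherits a removable edge from the one guaranteed in $G$; the induction hypothesis then gives, since $d_{H_1}(\overline x)\ge 4$ and all other vertices of $H_1$ have degree at least $4$, that every edge of $\partial_{H_1}(h)$ is removable in $H_1$.

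Finally I would lift removability back to $G$ through the tight cut $\partial(X)$: an edge of $\partial_G(h)$ with both ends in $X$ is removable in $H_1$, hence in $G$; an edge of $\partial_G(h)$ lying in $\partial(X)$ is removable in $H_1$ and in $H_2$, hence in $G$ by Lemma \ref{lem:re_also_re}. Thus every edge of $\partial(h)$ is removable, completing the induction. The step I expect to be the real obstacle is forcing $d_{H_1}(\overline x)\ge 4$: if the contracted vertex $\overline x$ had degree $3$, the inductive conclusion for $H_1$ could be satisfied vacuously by $\overline x$, which need not descend to a degree-$3$ vertex of $G$. The remedy is exactly the freedom to choose the larger of the two $2$-separation cuts associated with $\{u,v\}$ on the side containing $h$, together with the elementary count showing such a cut always has at least four edges.
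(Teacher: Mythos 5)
Your argument is correct, and it runs on the same chassis as the paper's proof: Proposition \ref{pro:brick-cubic-re} for the $3$-connected case, a $2$-separation cut whose contractions are bicritical (Corollary \ref{cor:2-vtx_cut_of_Bi} and Proposition \ref{pro:2-sep-cut-of-bi}), transfer of removability across the tight cut via Lemma \ref{lem:re_also_re}, Proposition \ref{pro:WL-bicritical-no-re} to guarantee removable edges, and induction on $|V(G)|$ (which the paper phrases as a minimum counterexample). The genuine difference is how the inductive step is organized. You first choose, among the two $2$-separation cuts associated with $\{u,v\}$, one of size at least $4$ whose shore $X$ contains $h$ (your count $|\partial(V(G_1)+u)|+|\partial(V(G_1)+v)|=d_G(u)+d_G(v)\ge 7$ is right), so both contraction vertices have degree at least $4$; you then invoke the statement for $H_2$ with hub $x$ and for $H_1$ with hub $h$ itself, and finally lift removability of every edge of $\partial(h)$ back to $G$ through the tight cut. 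The paper does not control the cut size; instead it treats the cut vertex $u$ as the distinguished vertex of $H_1$ and splits into cases according to whether $H_2$ (respectively $H_1$) has removable edges and whether all edges at $x$ (respectively $u$) are removable, extracting a degree-$3$ vertex of a contraction when they are not. Your cut-size device buys exactly what you claim: the contraction vertices can never masquerade as the degree-$3$ vertex returned by the induction hypothesis, every removable edge of $H_1$ is pinned to $h$ rather than merely to the cut vertex, and the conclusion for $G$ drops out directly, at the cost only of the small counting argument. Two cosmetic points: ``every vertex of $H_1$ other than $h$'' should read ``other than $h$ and $\overline{x}$'' (you handle $\overline{x}$ separately anyway), and you quietly use the standard fact that bicritical graphs have minimum degree at least $3$, which the paper also uses implicitly.
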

\begin{proof}If $G$ is 3-connected, that is, $G$ is a brick, then the result holds by Proposition \ref{pro:brick-cubic-re}.
So we assume that $G$ is not 3-connected. As $G$ is bicritical,
     $G$ has a 2-separation  by Corollary \ref{cor:2-vtx_cut_of_Bi}.
    Let a bicritical graph $G$ be chosen with $|V (G)|$ minimum  such that some edge of $\partial(h)$ is not removable and every vertex in $V(G)\setminus\{h\}$ is of degree at least 4.

    As $K_4$ is a brick, we have $|V(G)|\ge6$.
    Let $\{u,v\}$ be a 2-separation of $G$. Assume that $\partial(X)$ is a 2-separation cut associated with $\{u,v\}$ such that $u\in X$.
    Let $H_1=G/(\overline{X}\to \overline{x})$ and $H_2=G/(X\to x)$.
    Then by Proposition \ref{pro:2-sep-cut-of-bi}, $H_1$ and $H_2$ are bicritical. Without  loss of generality, assume that $h\in X$.

 We first suppose that  $H_2$ contains no removable edges. Then  $V(H_2)\setminus\{x,v\}$ contains a vertex $s_1$ of degree 3 by Proposition \ref{pro:WL-bicritical-no-re}.
 So $d_G(s_1)=d_{H_2}(s_1)=3$.
 Now we suppose that  $H_2$ contains a removable edge.
 By Lemma \ref{lem:re_also_re}, the possible removable edges of $H_2$ are incident with the vertex $x$ (in this case, $h=u$), or belong to $E[\{x\},\{v\}]$. Then all the removable edges of $H_2$ are incident with $x$. As $|V(H_2)|<|V(G)|$, $V(H_2)\setminus \{x\}$  contains a vertex  of degree 3 or every edge of $\partial(x)$ is removable in $H_2$ by the minimality of $V(G)$.
     If $V(H_2)\setminus \{x\}$  contains a vertex  of degree 3, then this vertex, different from $h$, is also of degree 3 in $G$. Now we consider the case when  every edge of $\partial(x)$ is removable in $H_2$.  If every edge of $\partial_{H_1}(u)$ is removable in $H_1$, then every edge of $\partial_{G}(u)$ is removable in $G$ by Lemma \ref{lem:re_also_re}.
     So we assume that some edge of $\partial_{H_1}(u)$ is not removable in $H_1$. If every edge of $\partial_{H_1}(u)$ is not removable in $H_1$, then
     $H_1$ contains no removable edges. Similar to the case when  $H_2$ contains no removable edges, the result follows.
     So $\partial_{H_1}(u)$ contains removable edges and nonremovable edges  in $H_1$.
     Then every removable of  $H_1$ is incident with $u$.
As $|V(H_1)|<|V(G)|$, $V(H_1)\setminus \{u\}$  contains a vertex $s_2$ of degree 3  by the minimality of $V(G)$. Then $d_G(s_2)=d_{H_1}(s_2)=3$. The result follows.
   \end{proof}

\section{Proof of  Theorem \ref{thm:main-thm}}
Let $B$ be a maximal nontrivial barrier of a matching covered nonbipartite graph $G$.
Denote by $H(G,B)$ the graph obtained from $G$ by contracting every nontrivial odd component of $G-B$ to a singleton. By Corollary \ref{cor:M-C-without-even-components},
$B$ is an independent set and $G-B$ has no even components. So $H(G,B)$ is a bipartite graph with $B$ as one of its color classes.
Let $I=V(H(G,B))\setminus B$. {When no confusion arises, we assign the same label to any vertex (or edge) common to both $G$ and $H$.}
Note that $v\in V(G)\cap V(H)$ if and only if $v$ is not gotten by contracting a nontrivial odd component of $G-B$.
 Let $W_{H(G,B)}=\{u\in I: \mbox{$u$ is incident with some removable edge of $H(G,B)$}\}$.
As each odd component of $G-B$ is a shore of a barrier cut of $G$, $H(G,B)$ is a matching covered bipartite graph.

\hspace{-0.8cm}
{\bf Theorem 1.4.} {\em
    Let $G$ be a minimal matching covered graph with at least four vertices. Then $\delta(G)=2$  or 3.
}
 \begin{proof}

By Propositions \ref{pro:mc_is_Bi} and \ref{pro:WL-bicritical-no-re}, if a minimal matching covered graph has no nontrivial barriers, then the result holds.
So we consider the case when the minimal matching covered graph has a nontrivial barrier. Let a minimal matching covered $G$ that has minimum degree  at least $4$ be chosen with $|V (G)|$ minimum. {Then $G$ is nonbipartite.
 If $|V(G)|=4$, then $G$ is isomorphic to $K_4$, in which every vertex has degree 3.} So $|V(G)|>4$.
 We have the following claim.\\

\hspace{-0.7cm} {\bf Claim A.} {\em
%For any maximal nontrivial barrier $B$ of $G$, we have $|W_{H(G,B)}|\ge2$. Moreover,  for every vertex $u\in W_{H(G,B)}$, $u\notin V(G)$.
For any {nontrivial barrier} $B$ of $G$, we have $|W_{H(G,B)}|\ge2$. Moreover,  for every vertex $u\in W_{H(G,B)}$, $u\notin V(G)$.
}
\begin{proof}
If $\delta(H(G,B))\ge3$, by Lemma \ref{lem:bipart-M-C-at-most-1-nonre}, $H(G,B)$ has two nonadjacent removable edges and then, $|W_{H(G,B)}|\ge2$.  So we will show that $\delta(H(G,B))\ge3$.
Suppose, to the contrary, that  there exists a vertex $k\in H(G,B)$ such that $d_{H(G,B)}(k)=2$ (As $H(G,B)$ is matching covered with at least 4 vertices, $H(G,B)$ is 2-connected). Then $k\in I$ and $k\notin V(G)$, as $B\subset V(G) $ and $\delta(G)\ge4$. Assume that $k$ is obtained by contracting the odd component $K$ of $G-B$. Let $H_1=G/(\overline{K}\to\overline{k})$.
Then $H_1$ is  matching covered as $\partial(V(K))$ is a barrier cut of $G$.
Note that every edge of $E(H_1)\setminus\partial_{H_1}(\overline{k})$ is not removable in $H_1$ by Lemma \ref{lem:re_also_re}, as $G$ has no removable edges.
As  $d_{H_1}(\overline{k})=2$, it can be checked that  every edge of $\partial_{H_1}(\overline{k})$ is not removable in $H_1$. Then
$H_1$ is minimal matching covered.
{Let $Y=\{\overline{k},k_1,k_2\}$, where $k_1$ and $k_2$ are the neighbors of $\overline{k}$ in $H_1$.
  Then $\partial_{H_1}(Y)$ is a tight cut of $H_1$. Let $H_2=H_1/(Y\to y)$. As $\{k_1,k_2\}\subset V(G)$, the degrees of $k_1$ and $k_2$  are at least 4, respectively. Moreover, for each $i\in\{1,2\}$, $|E[\{k_i\},\{\overline{y}\}]|\ge2$ in $H_1/(\overline{Y}\to\overline{y})$ and then, every edge incident with $\overline{y}$ is removable in $H_1/(\overline{Y}\to\overline{y})$.
   Since $H_1$ contains no removable edges, every edge incident with $y$ is not removable in $H_2$ by Lemma \ref{lem:re_also_re}.
   Note that $d_{H_2}(y)\ge4$ and $V(H_2)\setminus \{y\}\subset V(G)$. Then $\delta(H_2)\ge4$.
    Therefore, $H_2$ is a minimal matching covered graph with $\delta(H_2)\ge4$ and $|V(H_2)|<|V(G)|$,} contradicting the choice that $|V(G)|$ is minimum. So $\delta(H(G,B))\ge3$.

Suppose, to the contrary, that there exists a vertex $u\in V(G)\cap W_{H(G,B)}$. Since $H(G,B)$ is bipartite, we may assume that  $ub$ is removable in $H(G,B)$ where $b\in B$.
  As $u,b\in V(G)$, $ub$ is removable in $G$ by Lemma \ref{lem:re_also_re}, contradicting the assumption that $G$ has no removable edges. So $ V(G)\cap W_{H(G,B)}=\emptyset$.
\end{proof}
Let $B_0$ be a maximal nontrivial barrier of $G$. By Claim A, we may choose  a nontrivial odd component of $G-B_0$, say
 $Q_0$, such that $q_0\in W_{H(G,B_0)}$, where the vertex $q_0$ is gotten by contracting the component $Q_0$.
Let  $G_1=G/(\overline{V(Q_0)}\to \overline{q_0})$.
Then $G_1$ is not bipartite. Otherwise, suppose that $A'$ and $B'$ are two color classes of $G_1$ such that $\overline{q_0}\in B'$. As  $|V(G_1)|\ge4$, $B_0\cup B'\setminus\{\overline{q_0}\}$ is a barrier of G satisfying $|B_0\cup B'\setminus\{\overline{q_0}\}|>|B_0|$, which contradicts the maximality of $B_0$.
If $G_1$ is not bicritical, assume that $B_1$ is a maximal nontrivial barrier of $G_1$.
We have the following claim.\\

\hspace{-0.7cm} {\bf Claim B.}  {\em
There exists a nontrivial odd component $Q_1$ of $G_1-B_1$ such that $\overline{q_0}\notin Q_1$ $($i.e., $V(Q_1)\subset V(G)$$)$, and $q_1\in W_{H(G_1,B_1)}$, where $q_1$ is obtained by contracting the odd component $Q_1$ of $G_1-B_1$.}

\begin{proof} As $B_0$ is maximal and $B_1$ is not trivial, $\overline{q_0}\notin B_1$. Otherwise, $B_0\cup B_1\setminus\{\overline{q_0}\}$ is a barrier of $G$ satisfying $|B_0\cup B_1\setminus\{\overline{q_0}\}|>|B_0|$, which contradicts the maximality of $B_0$. So $\overline{q_0}\in V(H(G_1,B_1))\setminus B_1 $ or $\overline{q_0}$ lies in some component of $G_1-B_1$.
Let $q$ be a vertex in $V(H(G_1,B_1))\setminus B_1$ such that $q= \overline{q_0}$ if $\overline{q_0}\in V(H(G_1,B_1))\setminus B_1$, otherwise $q$ is the vertex gotten by contracting the nontrivial odd components of $G_1-B_1$ that contains $\overline{q_0}$.
 {As $\overline{q_0}\notin B_1$, $B_1$ is also a barrier of $G$. Then $H(G_1,B_1)=H(G,B_1)$.} Then similar to the proof of Claim A, we can show that $\delta(H(G_1,B_1))\ge3$ by the minimality of $G$. By Lemma \ref{lem:bipart-M-C-at-most-1-nonre}, we have $|W_{H(G_1,B_1)}|\ge2$.  So there exists a vertex $q_1$ in $W_{H(G_1,B_1)}$ such that  $q_1\neq q$. Then the nontrivial odd components of $G_1-B_1$ contracted to $q_1$ is that we need. So the claim holds.
\end{proof}

%It should be note that  the result of Claim A also holds for the graph $G_1$ in Claim B with a similar proof.
It is known that all the maximal barriers in $G$ is a partition of $V(G)$ (see Lemma 5.2.1 in \cite{LP86}). We may contract several maximal barriers to get a bicritical graph with given property.
 Let $G_2=G_1/(\overline{V(Q_1)}\to \overline{q_1})$. If $G_2$ has a maximal nontrivial barrier $B_2$, similar to Claim B, we may assume that  $Q_2$ is a nontrivial component of $G_2-B_2$ such that $V(Q_2)\subset V(G)$, and $q_2\in W_{H(G_2,B_2)}$, where $q_2$ is obtained by contracting the odd component $Q_2$ of $G_2-B_2$. And then let $G_3=G_2/(\overline{V(Q_2)}\to \overline{q_2})$....
 Continue above steps, we finally  obtain a matching covered graph $G_s$ satisfying the following:\\
1) $G_s$ has a maximal nontrivial barrier $B_s$ and  $V(G_s)\setminus\{\overline{q_{s-1}}\}\subset V(G)$; and\\
2) there exists a nontrivial component $Q_s$ of $G_s-B_s$,  $G_s/(\overline{Q_s}\to\overline{q_s})$ has no nontrivial barriers, $V(Q_s)\subset V(G)$ and $q_s\in W_{H(G_s,B_s)}$, where $q_s$ is obtained by contracting the odd component $Q_s$ of $G_s-B_s$.\\

Let $G'=G_s/(\overline{Q_s}\to\overline{q_s})$.
{Note that $G'$ is a bicritical graph, $V(G')\setminus\{\overline{q_s}\}\subset V(G)$ and $G$ is a minimal matching covered graph.
If $G'$ has a removable edge, then this edge is incident with $\overline{q_s}$ by Lemma \ref{lem:re_also_re}.}
Noting $q_s\in W_{H(G_s,B_s)}$,  $q_s$ is incident with a removable edge in
$H(G_s,B_s)$, say $e$. Then the corresponding edge of $e$ (incident with $\overline{q_s}$) in $G'$ is not removable in $G'$ by Lemma \ref{lem:re_also_re} again.
By Lemma \ref{lem:WL-bicritical},
there exists a vertex $v\in V(G')\setminus\{\overline{q_s}\}$ such that $d_{G'}(v)=3$. As $V(G')\setminus\{\overline{q_s}\}\subset V(G)$, we have $d_{G}(v)=3$, contradicting the assumption that $\delta(G)\ge4$. So the theorem holds.
 \end{proof}

\end{document}